\newif\ifpreprint
\newcommand{\Z}{\mathbb{Z}} 
\newcommand{\R}{\mathbb{R}} 
\newcommand{\N}{\mathbb{N}} 
\newcommand{\setu}{\mathfrak{u}}
\newcommand{\setv}{\mathfrak{v}}
\newcommand{\setw}{\mathfrak{w}}
\newcommand{\setz}{\mathfrak{z}}
\newcommand{\setU}{\mathfrak{U}}
\newcommand{\II}{\mathcal{I}} 
\newcommand{\HH}{\mathcal{H}} 
\newcommand{\KK}{\mathcal{K}} 
\newcommand{\FF}{\mathcal{F}} 
\newcommand{\GG}{\mathcal{G}} 
\newcommand{\YY}{\mathcal{Y}} 
\DeclareMathOperator{\wal}{wal}
\newcommand{\rd}{\,\mathrm{d}} 
\newcommand{\rmd}{\mathrm{d}} 
\newcommand{\bszero}{\boldsymbol{0}} 
\newcommand{\bsb}{\boldsymbol{b}}    
\newcommand{\bsell}{{\boldsymbol{\ell}}}    
\newcommand{\bsq}{{\boldsymbol{q}}}    
\newcommand{\bst}{\boldsymbol{t}}    
\newcommand{\bsh}{\boldsymbol{h}}    
\newcommand{\bsk}{{\boldsymbol{k}}}    
\newcommand{\bsx}{{\boldsymbol{x}}}    
\newcommand{\bsy}{{\boldsymbol{y}}}    
\newcommand{\bsv}{{\boldsymbol{v}}}    
\newcommand{\bsDelta}{\boldsymbol{\Delta}}    
\newcommand{\bsalpha}{{\boldsymbol{\alpha}}}    
\newcommand{\bsgamma}{\boldsymbol{\gamma}}    
\newcommand{\bsnu}{\boldsymbol{\nu}}    
\newcommand{\bsomega}{{\boldsymbol{\omega}}}    
\newcommand{\bstau}{\boldsymbol{\tau}}			
\newcommand{\bsT}{\boldsymbol{T}}			
\newcommand{\E}{\mathbb{E}}
\newcommand{\calY}{\mathcal{Y}}
\newcommand{\calN}{\mathcal{N}} 
\newcommand{\x}{\chi} 
\newcommand{\e}{\mathrm{e}}
\newcommand{\wor}{{\mathrm{wor}}}
\newcommand{\cost}{\mathrm{cost}}
\DeclareSymbolFont{bbold}{U}{bbold}{m}{n}
\DeclareSymbolFontAlphabet{\mathbbold}{bbold}
\newcommand{\supp}{\mathop{\mathrm{supp}}}
\newcommand{\esssup}{\mathop{\mathrm{ess\,sup}}}
\newcommand{\essinf}{\mathop\mathrm{ess\,inf}} 
\DeclareMathOperator{\tr}{tr} 
\newcommand{\MDFEM}{{\mathrm{MDFEM}}}
\newcommand{\QMCFEM}{{\mathrm{QMCFEM}}}
\newcommand{\dd}{{d'}}
\newcommand{\ddelta}{{\delta'}}
\theoremstyle{plain}
  \newtheorem{theorem}{Theorem}
  \newtheorem{proposition}{Proposition}
  \newtheorem{lemma}{Lemma}
\theoremstyle{definition}
  \newtheorem{definition}{Definition}
\theoremstyle{remark}
  \newtheorem{remark}{Remark}
\newcommand{\RefDef}[1]{Definition~\textup{\ref{#1}}}
\newcommand{\RefSec}[1]{Section~\textup{\ref{#1}}}
\newcommand{\RefApp}[1]{Appendix~\textup{\ref{#1}}}
\newcommand{\RefThm}[1]{Theorem~\textup{\ref{#1}}}
\newcommand{\RefThmTwo}[2]{Theorems~\textup{\ref{#1}} and~\textup{\ref{#2}}}
\newcommand{\RefProp}[1]{Proposition~\textup{\ref{#1}}}
\newcommand{\RefPropTwo}[2]{Propositions~\textup{\ref{#1}} and~\textup{\ref{#2}}}
\newcommand{\RefPropRange}[2]{Propositions~\textup{\ref{#1}}--\textup{\ref{#2}}}
\newcommand{\RefLem}[1]{Lemma~\textup{\ref{#1}}}
\newcommand{\RefRem}[1]{Remark~\textup{\ref{#1}}}
\newcommand{\RefRemTwo}[2]{Remarks~\textup{\ref{#1}} and~\textup{\ref{#2}}}
\newcommand{\theAbstract}{
We introduce the \emph{multivariate decomposition finite element method} (MDFEM) for elliptic PDEs with lognormal diffusion coefficients, that is, when the diffusion coefficient has the form $a=\exp(Z)$ where $Z$ is a Gaussian random field defined by an infinite series expansion $Z(\bsy) = \sum_{j \ge 1} y_j \, \phi_j$ with $y_j \sim \calN(0,1)$ and a given sequence of functions $\{\phi_j\}_{j \ge 1}$. We use the MDFEM to approximate the expected value of a linear functional of the solution of the PDE which is an infinite-dimensional integral over the parameter space. The proposed algorithm uses the \emph{multivariate decomposition method} (MDM) to compute the infinite-dimensional integral by a decomposition into finite-dimensional integrals, which we resolve using \emph{quasi-Monte Carlo} (QMC) methods, and for which we use the \emph{finite element method} (FEM) to solve different instances of the PDE.

We develop higher-order quasi-Monte Carlo rules for integration over the finite-di\-men\-si\-onal Euclidean space with respect to the Gaussian distribution by use of a truncation strategy. By linear transformations of interlaced polynomial lattice rules from the unit cube to a multivariate box of the Euclidean space we achieve higher-order convergence rates for functions belonging to a class of \emph{anchored Gaussian Sobolev spaces} while taking into account the truncation error. These cubature rules are then used in the MDFEM algorithm.

Under appropriate conditions, the MDFEM achieves higher-order convergence rates in terms of error versus cost, i.e., to achieve an accuracy of $O(\epsilon)$ the computational cost is $O(\epsilon^{-1/\lambda-\dd/\lambda}) = O(\epsilon^{-(p^* + \dd/\tau)/(1-p^*)})$ where $\epsilon^{-1/\lambda}$ and $\epsilon^{-\dd/\lambda}$ are respectively the cost of the quasi-Monte Carlo cubature and the finite element approximations, with $\dd = d \, (1+\ddelta)$ for some $\ddelta \ge 0$ and $d$ the physical dimension, and $0 < p^* \le (2 + \dd/\tau)^{-1}$ is a parameter representing the sparsity of $\{\phi_j\}_{j \ge 1}$.
}
\begin{document}

\title{MDFEM: Multivariate decomposition finite element method for elliptic PDEs with lognormal diffusion coefficients \\ using higher-order QMC and FEM}

\ifpreprint
  \author[1]{Dong T.\,P. Nguyen}
  \author[2]{Dirk Nuyens}
  \affil[1]{dong.nguyen@hcmut.edu.vn, Faculty of Computer Science and Engineering, \authorcr Ho Chi Minh City University of Technology, VNU-HCM, Vietnam}
  \affil[2]{dirk.nuyens@cs.kuleuven.be, Department of Computer Science, \authorcr KU Leuven, Celestijnenlaan 200A  box 2402, B-3001 Leuven, Belgium}
\else
  \author{Dong T.\,P. Nguyen}\address{dong.nguyen@hcmut.edu.vn, Faculty of Computer Science and Engineering, Ho Chi Minh City University of Technology, VNU-HCM, Vietnam} 
  \author{Dirk Nuyens}\address{dirk.nuyens@cs.kuleuven.be, Department of Computer Science, KU Leuven, Celestijnenlaan 200A  box 2402, B-3001 Leuven, Belgium}
  
  \begin{abstract}
  \theAbstract
  \end{abstract}
\fi

\date{June 7, 2021}

\ifpreprint
\else
  \subjclass{65D30, 65D32, 65N30}
  \keywords{elliptic PDE, stochastic diffusion coefficient, lognormal case, infinite-dimensional integration, multivariate decomposition method, finite element method, higher-order quasi-Monte Carlo, high dimensional quadrature/cubature, complexity bounds.}
\fi

\maketitle

\ifpreprint
  \begin{abstract}
  \theAbstract
  \end{abstract}
  \noindent{\bf Keywords:} elliptic PDE, stochastic diffusion coefficient, lognormal case, infinite-dimensional integration, multivariate decomposition method, finite element method, higher-order quasi-Monte Carlo, high dimensional quadrature/cubature, complexity bounds.
\fi

\section{Introduction}\label{sec:introduction}

In this paper we are concerned with the application of higher-order quasi-Monte Carlo (QMC) rules and multivariate decomposition methods (MDM) to elliptic PDEs with random diffusion coefficients. We focus on the lognormal diffusion coefficient, the logarithm of which is a \emph{Gaussian random field}. The goal is to compute the expected value of some functional of the solution. 
This method was motivated by the need for new techniques for elliptic PDEs with smooth lognormal diffusion coefficients where the classical QMC approaches can only achieve first-order convergence, see, e.g., \cite{GKNSSS15,GKNSS18,HS19,HS19ML}.

The MDFEM was already analysed in the case of a uniform diffusion coefficient in~\cite{NN21}, but in this case higher-order QMC rules are readily available for integration over the unit cube.
For lognormal diffusions we cope with a more challenging problem since the expectation is taken with respect to the Gaussian distribution over an unbounded domain. Consequently, existing higher-order QMC algorithms are not directly applicable. To solve this problem we propose using a truncation method recently developed in~\cite{DILP18}, see also~\cite{NN17}. By exploiting the fast decay of the Gaussian distribution toward infinity, the Euclidean domain is truncated and the resulting integral is transformed to the unit cube using a linear transformation where suitable higher-order QMC rules can be applied. The proposed algorithm allows us to achieve higher-order convergence for sufficiently smooth integrands. 

Let $D \subset \R^d$ be a bounded polygonal domain in $\R^d$, with typically $d=1,2$ or $3$, with boundary $\partial D$. We consider the following elliptic Dirichlet problem
\begin{align}\label{eq:PDE}
  -\nabla \cdot (a(\bsx, \bsy) \, \nabla u(\bsx, \bsy))
  &=
  f(\bsx)
  ,
  &&\text{for $\bsx$ in $D$,}
  \\
  u(\bsx, \bsy)
  &=
  0
  ,
  &&\text{for $\bsx$ on $\partial D$}
  \notag
  .
\end{align}
Here, the gradient operator $\nabla$ is taken with respect to $\bsx$ and $a:  D \times \Omega^{\N} \to \R$ for some $\Omega \subseteq \R$. 

We consider the case when $\bsy = \{y_j\}_{j \ge 1}$ is a sequence of parameters distributed on $\R^\N$ according to the product Gaussian measure $\mu = \bigotimes_{j \ge 1} \calN(0,1)$, and the diffusion coefficient takes the form
\begin{align}\label{eq:diffusion-a}
  a(\bsx, \bsy)
  &:=
  \exp\left(Z(\bsx, \bsy)\right)
  ,
\end{align}
with 
\begin{align}\label{eq:randomfield-Z}
  Z(\bsx, \bsy)
  &:=
  \sum_{j \ge 1} y_j \, \phi_j(\bsx)
  ,
  \qquad y_j \in \Omega = \R
  , \qquad
  y_j \sim \calN(0,1)
  ,
\end{align}
where $\{\phi_j\}_{j \ge 1}$  is a suitable system of real-valued, bounded, and measurable functions. 

Let us denote the natural numbers by $\N := \{1,2,\ldots\}$, and $\N_0 := \{0, 1, 2, \ldots\}$. 
For any $s \in \N$ we use the shorthand notation $\{1:s\}$ to denote the set of indices $\{1,2,\ldots,s\}$.
Let $G$ be a linear and bounded functional of  the solution $u$. We are interested in computing the expected value of $G(u)$ with respect to the probability distribution $\mu$, i.e., 
\begin{align}\label{eq:QoI}
  \E[G(u)]
  =
  \II(G(u))
  &:=
  \int_{\R^\N} G(u(\cdot, \bsy)) \rd\mu(\bsy)
  \\ \notag
  &:=
  \lim_{s \to \infty} 
  \int_{\R^s} G(u(\cdot,y_1, y_2, \ldots, y_s, 0, 0, \ldots))\rd\mu(\bsy_{\{1:s\}})
,
\end{align} 
where for $\setu \subseteq \N$
\begin{align*}
  \rmd\mu(\bsy_\setu)
  &:=
  \prod_{j\in\setu} \rho(y_j) \rd y_j
  =
  \rho_\setu(\bsy_\setu) \rd\bsy_\setu
  ,
  &
  \rho(y)
  &:=
  \frac{\exp(-y^2/2)}{\sqrt{2 \pi}}
  ,
\end{align*}
with $\rho_\setu(\bsy_\setu) := \prod_{j\in\setu} \rho(y_j)$.
We note that, depending on what is most natural, we write $\rmd\mu(\bsy_\setu)$ or $\rho_\setu(\bsy_\setu) \rd\bsy_\setu$ with the understanding that the product probability measure is always on all the variables of the integral.

The \emph{weak formulation} of problem \eqref{eq:PDE} is to find for a given $\bsy \in \Omega^\N$ the solution $u(\cdot, \bsy) \in V := H_0^1(D)$ such that
\begin{align}\label{eq:PDE-weak-form}
  \int_D a(\bsx, \bsy) \, \nabla u(\bsx, \bsy) \cdot \nabla v(\bsx) \rd\bsx 
  &=
  \int_D f(\bsx) \, v(\bsx) \rd\bsx
  ,
  \qquad
  \forall v \in V
  .
\end{align}
The space $V = H_0^1(D)$ is equipped with the norm $\|v\|_V := \|\nabla v\|_{L^2(D)}$.

Under some assumptions on the system $\{\phi_j\}_{j \ge 1}$ we have existence, uniqueness and an \emph{a priori} estimate of the solution of the weak formulation by means of the Lax--Milgram lemma.
For this we need to show a lower bound and an upper bound on $a(\bsx,\bsy)$ for $\bsx \in D$, but since we have a lognormal field, it might be that $a(\bsx,\bsy)$ is not bounded for certain values of $\bsy \in \R^\N$.
However, under the following assumptions, which are standard, see, e.g., \cite{BCDM17, BCDS17, HS19, Kaz18}, we can claim lower and upper bounds to hold for $\mu$~almost every~$\bsy$.
Let us first define, for some given space $X$ and $p \in [1,\infty)$, the space $L_{p,\rho}(\R^\N;X)$ of all strongly measurable mappings $v: \R^\N \to X$ such that
\begin{align*}
  \|v\|_{L_{p,\rho}(\R^\N;X)}
  &:=
  \left( \int_{\R^\N} \|v\|_X^p  \rd\mu(\bsy) \right)^{1/p}
  =
  \left( \int_{\R^\N} \|v\|_X^p  \, \prod_{j\ge1} \rho(y_j) \rd\bsy \right)^{1/p}
  < \infty
  .
\end{align*}
The following result is implied by~\cite[Section~2]{BCDM17} and \cite[Theorem~2, Proposition~3 and Corollary~6]{HS19}.

\begin{proposition}\label{prop:Lax-Milgram}
  If there exists a positive sequence $\{b_j\}_{j\ge 1}$, with $0 < b_j \le 1$ for all $j$, such that
  \begin{align}\label{eq:condition:kappa}
    \kappa
    &:=
    \left\| \sum_{j \ge 1} \frac{|\phi_j|}{b_j} \right\|_{L^\infty(D)}
    =
    \sup_{\bsx \in D} \sum_{j\ge1} \frac{|\phi_j(\bsx)|}{b_j}
    <
    \infty
    ,
  \end{align}
  and 
  \begin{align}\label{eq:condition:pstar-summability}
    \{b_j\}_{j\ge 1} \in \ell^{p^*}(\N) \text{ for some } p^* \in (0,\infty)
    ,
  \end{align}
  then it holds that the Gaussian field $Z$ and the lognormal field $a = \exp(Z)$ are elements of $L_{p,\rho}(\R^\N;L^\infty(D))$ for every $p \in [1, \infty)$.
  Therefore, for $\mu$~almost every $\bsy \in \R^\N$ we have
  \begin{align}
    \label{eq:amax} 
    a_{\max}(\bsy)
    &:=
    \esssup_{\bsx \in D} |a(\bsx, \bsy)|
    =
    \|a(\cdot, \bsy)\|_{L^\infty(D)}
    <
    \infty
    ,
    \\
    \label{eq:amin}
    a_{\min}(\bsy)
    &:=
    \essinf_{\bsx \in D} a(\bsx, \bsy)
    \ge
    \exp(-\|Z(\cdot,\bsy)\|_{L^\infty(D)})
    >
    0
    ,
  \end{align}
  and, by the Lax--Milgram lemma, the solution $u(\cdot,\bsy)$ then exists and is unique and satisfies
  \begin{align}\label{eq:LM}
    \|u(\cdot, \bsy)\|_V
    &\le
    \frac{1}{a_{\min}(\bsy)} \, \|f\|_{V^*}
    ,
  \end{align}
  and additionally, $a^{-1}_{\min}$ is an element of $L_{p,\rho}(\R^\N)$ for every $p \in [1, \infty)$, such that
  \begin{align}\label{eq:LM-Bochner}
    \|u\|_{L_{p,\rho}(\R^\N; V)}
    \le
    \left\|\frac1{a_{\min}}\right\|_{L_{p,\rho}(\R^\N)} \|f\|_{V^*}
    <
    \infty
    ,
  \end{align}
  for any $f \in V^*$ and $p \in [1, \infty)$.
\end{proposition}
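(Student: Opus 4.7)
The plan is to prove the four assertions of the proposition in the order they appear, reducing everything to a single pointwise estimate of $\|Z(\cdot,\bsy)\|_{L^\infty(D)}$ in terms of the sequence $\{b_j\}$.

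\medskip

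\noindent\textbf{Step 1: a pointwise bound on $\|Z(\cdot,\bsy)\|_{L^\infty(D)}$.} The starting observation is the elementary inequality
\begin{align*}
  |Z(\bsx,\bsy)|
  \le
  \sum_{j\ge 1} (b_j\,|y_j|)\,\frac{|\phi_j(\bsx)|}{b_j}
  \le
  \Bigl(\sup_{j\ge 1} b_j\,|y_j|\Bigr) \sum_{j\ge 1}\frac{|\phi_j(\bsx)|}{b_j}
  \le
  \kappa\,\sup_{j\ge 1} b_j\,|y_j|,
\end{align*}
valid uniformly in $\bsx \in D$. The suggested tool here is just H\"older's inequality in the pairing $\ell^\infty\times\ell^1$, combined with \eqref{eq:condition:kappa}. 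This already shows that $\|Z(\cdot,\bsy)\|_{L^\infty(D)} < \infty$ for every $\bsy$ such that $\sup_j b_j|y_j|$ is finite.

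\medskip

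\noindent\textbf{Step 2: moments of $\|Z\|_{L^\infty(D)}$ and exponential integrability.} Since $b_j\in\ell^{p^*}$ we have $b_j\to 0$, and a Gaussian tail estimate together with a union bound yields
\begin{align*}
  \mu\bigl(\sup_{j\ge 1} b_j|y_j| > t\bigr)
  \le
  \sum_{j\ge 1}\mu(|y_j|>t/b_j)
  \le
  2\sum_{j\ge 1}\exp\bigl(-t^2/(2 b_j^2)\bigr).
\end{align*}
After sorting $\{b_j\}$ in decreasing order and using $b_j \lesssim j^{-1/p^*}$, the right-hand side decays faster than any polynomial and, in fact, gives a Gaussian-type tail in $t$. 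Consequently, $\sup_j b_j|y_j|$ has all polynomial moments and also exponential moments $\E_\mu[\exp(\alpha \sup_j b_j|y_j|^2)]$ for some $\alpha>0$ (this is essentially Fernique's theorem applied to the Gaussian random element $Z$ in a separable subspace of $L^\infty(D)$). Combining with Step~1 gives $Z \in L_{p,\rho}(\R^\N;L^\infty(D))$ for all $p<\infty$, and since
\begin{align*}
  \max\bigl(a_{\max}(\bsy),\,1/a_{\min}(\bsy)\bigr)
  \le
  \exp(\|Z(\cdot,\bsy)\|_{L^\infty(D)})
  \le
  \exp(\kappa\,\sup_{j\ge 1} b_j|y_j|),
\end{align*}
the same exponential-moment bound also places $a$ and $a_{\min}^{-1}$ into $L_{p,\rho}$ for every $p\in[1,\infty)$. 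This step, in particular turning a second-moment (Fernique) bound on $\sup_j b_j|y_j|$ into moments of an exponential of a linear quantity, is the main technical obstacle.

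\medskip

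\noindent\textbf{Step 3: pointwise Lax--Milgram and integration.} On the full-measure set $\Omega_0 := \{\bsy : 0 < a_{\min}(\bsy) \le a_{\max}(\bsy) < \infty\}$ identified in Step~2, the bilinear form $(u,v) \mapsto \int_D a(\bsx,\bsy)\,\nabla u\cdot\nabla v\,\rd\bsx$ is symmetric, continuous with constant $a_{\max}(\bsy)$, and coercive with constant $a_{\min}(\bsy)$ on $V=H_0^1(D)$. The Lax--Milgram lemma, applied for each fixed $\bsy\in\Omega_0$, yields existence, uniqueness and the estimate \eqref{eq:LM}. Raising \eqref{eq:LM} to the $p$-th power and integrating against $\mu$ produces
\begin{align*}
  \|u\|_{L_{p,\rho}(\R^\N;V)}^{p}
  \le
  \|f\|_{V^*}^{p}\int_{\R^\N} a_{\min}(\bsy)^{-p}\rd\mu(\bsy)
  =
  \|f\|_{V^*}^{p}\,\bigl\|1/a_{\min}\bigr\|_{L_{p,\rho}(\R^\N)}^{p},
\end{align*}
which is finite by Step~2 and delivers \eqref{eq:LM-Bochner}. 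Measurability of $\bsy\mapsto u(\cdot,\bsy)$ into $V$, needed to make the Bochner integral meaningful, follows from a standard Galerkin/approximation argument using finite-rank truncations $Z_s(\bsx,\bsy):=\sum_{j=1}^s y_j\phi_j(\bsx)$ and continuous dependence of the Lax--Milgram solution on the coefficient.
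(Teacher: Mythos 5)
Your argument is correct, but note that the paper does not actually prove this proposition: it is stated as ``implied from'' \cite[Section~2]{BCDM17} and \cite[Theorem~2, Proposition~3 and Corollary~6]{HS19}, with only your Step~1 estimate $\|Z(\cdot,\bsy)\|_{L^\infty(D)} \le \kappa \sup_{j\ge1} b_j |y_j|$ reproduced inline in the surrounding discussion. Your proof fills in precisely the route those references take: the $\ell^1$--$\ell^\infty$ pairing against $\kappa$, the union bound plus Gaussian tails to show $\sup_j b_j|y_j|$ has a Gaussian-type tail (hence all exponential moments $\E_\mu[\exp(\lambda \sup_j b_j|y_j|)]$, which is what you actually need --- the appeal to Fernique is redundant once you have the elementary tail bound, since $\exp(p\kappa\,\sup_j b_j|y_j|)$ is then integrable for every $p$), and finally pointwise Lax--Milgram followed by H\"older. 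Two small points deserve more care than your sketch gives them. First, the summability of $\sum_j \exp(-t^2/(2b_j^2))$ and the claim that it yields a Gaussian tail uniformly in $t\ge t_0$ genuinely uses $b_j\to 0$ at a summable-power rate, which is where \eqref{eq:condition:pstar-summability} enters; it is worth writing out that $\sum_j \exp(-c t^2 j^{2/p^*}) \le C\exp(-ct^2)$ for $t$ bounded away from zero. Second, membership in $L_{p,\rho}(\R^\N;L^\infty(D))$ requires strong (Bochner) measurability into the non-separable space $L^\infty(D)$; this follows because $Z_s\to Z$ in $L^\infty(D)$ for $\mu$-a.e.\ $\bsy$ (the tail is bounded by $\kappa\,\sup_{j>s}b_j|y_j|\to 0$ a.s.\ by Borel--Cantelli), so $Z$ is an a.e.\ limit of finite-rank, hence separably valued, measurable maps --- the same truncation argument you invoke at the end for $u$. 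With these details supplied, your proof is a complete and self-contained substitute for the citations.
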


\RefProp{prop:Lax-Milgram} shows that for $\mu$~almost every $\bsy \in \R^\N$ there exists a unique solution to~\eqref{eq:PDE-weak-form} where $\mu$ is the product Gaussian measure, and this stems from the measurability of $Z$ and $a = \exp(Z)$ in $L_{p,\rho}(\R^\N,L^\infty(D))$, in particular for $p=1$.
\label{pp:mu-almost-everywhere}To make the ``for $\mu$~almost every~$\bsy$'' more tangible we note that if we define the set $\Xi := \{ \bsy \in \R^\N : \sup_{j \ge 1} |y_j| \, b_j < \infty \} \subseteq \R^\N$, then for any $\bsy \in \Xi$
\begin{align*}
  \| Z(\cdot, \bsy) \|_{L^\infty(D)}
  &=
  \sup_{\bsx \in D} \left| \sum_{j\ge1} y_j \, \phi_j(\bsx) \right|
  \\
  &\le
  \sup_{\bsx \in D} \sum_{j\ge1} |y_j| \, \frac{b_j}{b_j} \, |\phi_j(\bsx)|
  \le
  \left( \sup_{j \ge 1} |y_j| \, b_j \right) \left( \sup_{\bsx \in D} \sum_{j\ge1} \frac{|\phi_j(\bsx)|}{b_j} \right)
  <
  \infty
  ,
\end{align*}
under condition~\eqref{eq:condition:kappa}. From here we can show the wanted results in \RefProp{prop:Lax-Milgram}: $a_{\max}(\bsy) < \infty$ and $a_{\min}(\bsy) > 0$ for all $\bsy \in \Xi$.
It can be shown, see, e.g., \cite{BCDM17}, that the set $\Xi$ has full measure $\mu(\Xi) = 1$ for $\{b_j\}_{j\ge1} \in \ell^{p^*}$, hence ``for $\mu$ almost every $\bsy$''.

To approximate the solution of~\eqref{eq:PDE-weak-form} the standard approach is to truncate the infinite series~\eqref{eq:randomfield-Z} to $s$ terms.
Let us denote the random field truncated to the first $s$ terms by $Z_s$ and the so-obtained truncated lognormal field by $a_s$, i.e.,
\begin{align}\label{eq:truncate-s}
  a_s(\bsx, \bsy)
  &:=
  a(\bsx, \bsy_{\{1:s\}})
  =
  \exp(Z(\bsx, \bsy_{\{1:s\}}))
  ,
  &
  Z_s(\bsx, \bsy)
  &:=
  Z(\bsx, \bsy_{\{1:s\}})
  =
  \sum_{j = 1}^s y_j \, \phi_j(\bsx)
  ,
\end{align}
and by $u_s$ the solution to~\eqref{eq:PDE-weak-form} with $a = a_s$, then it is shown, under the assumptions of \RefProp{prop:Lax-Milgram}, in \cite{BCDM17,HS19} that $Z_s \to Z$ in $L^\infty(D)$, $a_s \to a$ in $L^\infty(D)$ and $u_s \to u$ in $V$ when $s \to \infty$ for almost every $\bsy \in \R^\N$.
We note that all statements of \RefProp{prop:Lax-Milgram} also hold for the truncation to $s$ terms, see, e.g., \cite{HS19}.

\smallskip

To compute $\mathbb{E}[G(u)]$, see~\eqref{eq:QoI}, we cope with three computational challenges: the infinite number of variables of the integrand, the unboundedness of the integration domain, and the integrand involves solutions of a PDE.
Let us discuss how to solve these problems in more detail.

First, to approximate the infinite-dimensional integral we will make use of the \emph{multivariate decomposition method} which originated from the \emph{changing dimension method}, see, e.g.,~\cite{KSWW10, PW11, KNPSW17}. The goal is to decompose the infinite-dimensional problem into multiple finite-dimensional ones.
Let us illustrate the MDM for calculating the integral
\begin{align*}
  \II(\FF)
  =
  \int_{\R^\N} \FF(\bsy) \rd\mu(\bsy)
  &=
  \lim_{s \to \infty} \int_{\R^s} \FF(y_1,\ldots,y_s,0,0,\ldots) \rd\mu(y_1,\ldots,y_s)
  .
\end{align*}
We use the \emph{anchored decomposition} \cite{KSWW10b}, which, for any finite $s$, states
\begin{align}\label{eq:anchored-decomposition-F}
  \FF(y_1,\ldots,y_s,0,0,\ldots)
  &=
  \sum_{\setu \subseteq \{1:s\}} F_\setu(\bsy_\setu)
  ,
  &
  \text{with } \quad
  F_\setu(\bsy_\setu)
  &=
  \sum_{\setv \subseteq \setu} (-1)^{|\setu|-|\setv|} \FF(\bsy_\setv)
  ,
\end{align}
where $\bsy_\setu$ takes the values $y_j$ for $j \in \setu$ and $0$ for those $j \notin \setu$.
(Conditions and properties for $\FF$ and the $F_\setu$ will be provided later, here we just want to illustrate the concept.)
We note that $\FF(\bsy_\setv)$ is often written as $\FF([\bsy_\setv; \bszero]_\setv)$ or $\FF((\bsy_\setv;\bszero))$ or $\FF((\bsy_\setv,\bszero_{-\setv}))$ or $\FF(\bsy_\setv,\bszero_{-\setv})$, but we use our convention to not clutter further expressions more than necessary.
Thus, to obtain the projections $F_\setu$ we combine $\setv$-truncated evaluations of the function $\FF$ in points where all components outside of $\setv \subseteq \setu$ are set to zero.
Taking the limit, we write
\begin{align}\label{eq:MDM-decomposition-F}
  \FF(y_1,y_2,\ldots)
  &=
  \sum_{|\setu| < \infty} F_\setu(\bsy_\setu)
  :=
  \lim_{s \to \infty} \sum_{\setu \subseteq \{1:s\}} F_\setu(\bsy_\setu)
  .
\end{align}
Instead of just truncating to the first $s$ dimensions, this decomposition allows us to be more selective and approximate the function $\FF$, or its integral $\II(\FF)$ in our case, up to a certain accuracy by considering a set of important subsets $\setU_\epsilon \subset \N^\N$, called the \emph{active set}, as follows
\begin{align*}
  \FF_{\setU_\epsilon}(\bsy)
  &=
  \sum_{\setu \in \setU_\epsilon} F_\setu(\bsy_\setu)
  .
\end{align*}
This form is interesting when a bounded linear operation on $\FF$, which would be costly in the number of ``active variables'', could be approximated on $\FF_{\setU_\epsilon}$ by wrapping the linear operation inside the sum where hopefully only sets $\setu$ of small cardinality appear.
Note that a plain truncation to the first $s$ dimensions could be represented like this as well, but then the active set would contain all subsets of $\{1:s\}$, including the set $\{1:s\}$ itself, and it would be more efficient to just use $\FF(y_1,\ldots,y_s,0,\ldots)$ in this case.
The strength of the MDM is exactly that the active set can be chosen as to satisfy
\begin{align*}
  \left| \II(\FF) - \sum_{\setu \in \setU_\epsilon} Q_{\setu,n_\setu}(F_\setu(\bsy_\setu)) \right|
  &\le
  \epsilon
  ,
\end{align*}
for a certain error request $\epsilon > 0$ and appropriate cubature rules $Q_{\setu,n_\setu}$.
By assessing the relative contribution of specific sets of variables, for a given desired error, the MDM will decide which $\setu$ to include in the \emph{active set} to approximate the infinite MDM sum. In \RefProp{prop:cardinality-active-set} we will show that the sets in the active set have relatively low cardinalities, provided certain conditions on $\{\phi_j\}_{j \ge 1}$ are satisfied, and hence only relatively low-dimensional problems remain, which can be solved at small cost.
As a comparison, for a certain error request $\epsilon$, the truncation strategy might need a truncation dimension of $s=1907$, since $y_{1907}$ still has large enough influence, thus needing a $1907$-dimensional cubature rule, while the analysis of the MDM approach might show it is sufficient to include the sets $\{1,1907\}$ and $\{2,1907\}$, along with a lot of other small sets, but never surpassing more than $7$-dimensional subproblems, see \cite[Table~1]{GKNW18} for examples.
This is particularly useful when error bounds grow exponentially with the number of dimensions.

Next, to compute integrals over the Euclidean space, we exploit the fast decay of the Gaussian distribution to truncate the unbounded domain to bounded boxes. We then use a linear transformation to map the truncated integral into the unit cube. Finally, we apply existing higher-order quasi-Monte Carlo rules, in particular we apply \emph{interlaced polynomial lattice rules}, see, e.g., \cite{God15}, to approximate the truncated integral. Interlaced polynomial lattice rules have been used before in the PDE context to achieve higher-order convergence but with uniform diffusion in \cite{DKLNS14}.
In contrast, most existing QMC methods for integration with respect to the normal density, and hence most existing QMC based methods for estimating expected values in the PDE context with lognormal diffusion, map the integral to the unit cube by using the inverse of the Gaussian cumulative distribution function and then use \emph{randomly shifted lattice rules} to approximate this integral, see, e.g., \cite{KSWWa10,NK14}, and \cite{GKNSSS15,GKNSS18,HS19,HS19ML,KN16}.
The aim of all these methods is to obtain dimension-independent convergence by making use of weighted function spaces.
However, since using the inverse of the Gaussian cumulative distribution function might damage the smoothness of the integrand, they make use of first order QMC rules, namely, randomly shifted lattice rules, and limit themselves to first order convergence.
The proposed QMC method in this paper avoids damaging the smoothness of the integrand by using a truncation and mapping strategy.
As a result, it allows us to achieve higher-order convergence rates for sufficiently smooth integrands on the Euclidean space, since they retain their smoothness on the unit cube.
We do not include weights in our function spaces and therefore the constants depend exponentially on the number of dimensions.
This is by design, since by using the MDM we only need to tackle relatively low-dimensional integrals instead of approximating the truncated high-dimensional integral directly. The importance of the subproblems will determine which ones to include and how well to approximate them.

Lastly, for each variable $\bsy$ sampled by the QMC method the original stochastic PDE becomes a deterministic one. To solve each such problem we use the finite element method (FEM).
We call the combination of the \emph{multivariate decomposition method} (MDM) with the \emph{finite element method} (FEM) the \emph{multivariate decomposition finite element method} or MDFEM in short.

\smallskip

For the further discussion we need some notation and properties of the anchored decomposition.
In general our multi-indices and variables are infinite-dimensional, e.g., $\bsomega \in \N^\N$ and $\bsy \in \R^\N$. By a subscript $\setu \subseteq \N$ we mean to only retain those components in $\setu$ and set the other components to zero.
We extend this notation to sets and for a set $\mathcal{A}$ we define
\begin{align*}
  \mathcal{A}^\N_\setu
  &:=
  \{ \bsv \in (\mathcal{A} \cup \{0\})^\N : v_j \in \mathcal{A} \text{ when } j \in \setu \text{ and } v_j = 0 \text{ when } j \notin \setu \}
  .
\end{align*}
In the case that we want the components of a multi-index or vector to range over all values of a set $\mathcal{A}$ for the components in $\setu$, and be zero otherwise, we write $\bsy \in \mathcal{A}^\N_\setu$, and in this case $\bsy_\setu = \bsy$.
When we only need the components $y_j$ for $j \in \setu$ we then write, with a slight abuse of notation, $\bsy_\setu \in \mathcal{A}^{|\setu|}$ or even $\bsy_\setu \in \mathcal{A}^\N_\setu$ instead of $\bsy \in \mathcal{A}^\N_\setu$.
If there is possible confusion then we will indicate the meaning explicitly or use a more explicit notation.
Note that, contrary to typical usage, we occasionally also write $\bsomega_\setu \in \N_0^{|\setu|}$, with $\omega_j \in \N_0 = \{0,1,2,\ldots\}$ for $j \in \setu$, 
which means that we are working with the indices in $\setu$, but also allow $\omega_j = 0$ for $j \in \setu$ in this case.

We write $\partial^{\bsomega_\setu}_{\bsy_\setu} :=  \partial^{|\bsomega_\setu|} / \prod_{j \in \setu} \partial^{\omega_j}_{y_j}$ with $|\bsomega_\setu| := \sum_{j \in \setu} \omega_j$ and by $(\partial^{\bsomega_\setu}_{\bsy_\setu} F)(\bsy)$ we mean the value of such a partial derivative at~$\bsy$.
Whenever it is clear w.r.t.\ which variables the derivatives are taken then we just write $F^{(\bsomega_\setu)}$ or $F^{(\bsomega)}$.

\begin{lemma}\label{lem:anchored-decomposition}
	Given the anchored decomposition of a function $\FF$ (with anchor at zero) by~\eqref{eq:anchored-decomposition-F} and~\eqref{eq:MDM-decomposition-F}, then $F_\setu$ depends only on the variables listed in $\setu$ and satisfies
	\begin{align}\label{eq:prop1}
	    F_\setu(\bsy_\setu) = 0
		\quad \text{when } \exists j \in \setu : y_j = 0
		.
	\end{align}
	Furthermore, if $\FF$ has continuous partial derivatives up to $\partial^{\bsomega_\setu}_{\bsy_\setu}$ for some $\bsomega_\setu \in \N_0^{|\setu|}$
	then
	\begin{align}\label{eq:prop2}
		(\partial^{\bsomega_\setu}_{\bsy_\setu} F_\setu)(\bsy_\setu) 
		&=
		(\partial^{\bsomega_\setu}_{\bsy_\setu} \FF(\cdot_\setu))(\bsy_\setu)
		\quad \text{when } \forall j \in \setu : \omega_j \ge 1
		,
	\end{align}
	and
	\begin{align}\label{eq:prop3}
	  (\partial^{\bsomega_\setu}_{\bsy_\setu} F_\setu)(\bsy_\setu)
	  &=
	  0
	  \quad \text{when } \exists j \in \setu : y_j = 0 \text{ and } \omega_j = 0
	  .
	\end{align}
\end{lemma}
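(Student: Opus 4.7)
The plan is to reason directly from the combinatorial formula $F_\setu(\bsy_\setu) = \sum_{\setv \subseteq \setu} (-1)^{|\setu|-|\setv|} \FF(\bsy_\setv)$ given in~\eqref{eq:anchored-decomposition-F}. That $F_\setu$ depends only on the variables indexed by $\setu$ is immediate, since each summand $\FF(\bsy_\setv)$ with $\setv \subseteq \setu$ is a function of $\bsy_\setu$ alone. All three displayed identities are then obtained by pairing-or-vanishing arguments keyed to how $\FF(\bsy_\setv)$ behaves when $\setv$ is enlarged by a single distinguished index $j^* \in \setu$.

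For~\eqref{eq:prop1}, I would fix some $j^* \in \setu$ with $y_{j^*} = 0$ and pair every subset $\setv \subseteq \setu$ with $j^* \notin \setv$ to $\setv \cup \{j^*\}$. Because the $j^*$-th coordinate vanishes in both $\bsy_\setv$ and $\bsy_{\setv \cup \{j^*\}}$ (in the former since $j^* \notin \setv$, in the latter since $y_{j^*} = 0$), the two evaluations $\FF(\bsy_\setv)$ and $\FF(\bsy_{\setv \cup \{j^*\}})$ coincide, while their signs in the alternating sum differ by $(-1)$, so the pair cancels and the whole expression is zero.

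For~\eqref{eq:prop2}, I would differentiate the formula for $F_\setu$ term by term, which is legitimate under the assumed continuity of the mixed partials. For every proper subset $\setv \subsetneq \setu$ there is an index $j^* \in \setu \setminus \setv$, and the map $\bsy_\setu \mapsto \FF(\bsy_\setv)$ is constant in $y_{j^*}$; since $\omega_{j^*} \ge 1$, applying $\partial_{y_{j^*}}^{\omega_{j^*}}$ annihilates that summand. Only the full set $\setv = \setu$ survives, with coefficient $(-1)^0 = +1$, which is exactly the identity claimed.

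For~\eqref{eq:prop3}, I would combine the two previous ideas. Fix $j^* \in \setu$ with $y_{j^*} = 0$ and $\omega_{j^*} = 0$, and again pair each $\setv \subseteq \setu$ with $j^* \notin \setv$ to $\setv \cup \{j^*\}$. Because $\omega_{j^*} = 0$, no derivative ever acts on $y_{j^*}$, so the cancellation mechanism of~\eqref{eq:prop1} transfers verbatim to the differentiated summands $\partial^{\bsomega_\setu}_{\bsy_\setu} \FF(\bsy_\setv)$ and $\partial^{\bsomega_\setu}_{\bsy_\setu} \FF(\bsy_{\setv \cup \{j^*\}})$: they agree after setting $y_{j^*} = 0$ and carry opposite signs. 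The only point requiring care, and arguably the main technical obstacle of the lemma, is verifying precisely this commutation: that the coordinate-zeroing implicit in passing from $\bsy_{\setv \cup \{j^*\}}$ to $\bsy_\setv$ is compatible with $\partial^{\bsomega_\setu}_{\bsy_\setu}$ when evaluated at $y_{j^*} = 0$, which hinges squarely on the hypothesis $\omega_{j^*} = 0$.
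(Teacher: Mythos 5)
Your proposal is correct, and for property\RefEq{eq:prop2} it coincides with the paper's argument verbatim: differentiate the explicit sum term by term, observe that $\FF(\cdot_\setv)$ is constant in every $y_{j}$ with $j\notin\setv$, so any $\omega_j\ge1$ kills all terms except $\setv=\setu$. The two places where you diverge are\RefEq{eq:prop1} and\RefEq{eq:prop3}. For\RefEq{eq:prop1} the paper simply declares the property ``well known,'' whereas you supply the standard involution proof (pair $\setv$ with $\setv\cup\{j^*\}$ and note $\bsy_\setv=\bsy_{\setv\cup\{j^*\}}$ when $y_{j^*}=0$); this is the canonical argument and fills in what the paper omits. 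For\RefEq{eq:prop3} the paper reduces to\RefEq{eq:prop1}: since $\omega_{j^*}=0$, the evaluation at $y_{j^*}=0$ can be pulled inside the differential operator, so one differentiates the identically zero function $F_\setu|_{y_{j^*}=0}$. You instead rerun the pairing cancellation directly on the differentiated summands. Both arguments hinge on exactly the same commutation fact --- that a derivative not involving $y_{j^*}$ commutes with restriction to the hyperplane $y_{j^*}=0$ --- which you correctly identify as the only point needing care; the paper's reduction is marginally more economical because it invokes\RefEq{eq:prop1} once rather than repeating the combinatorics, but there is no difference in substance or in the hypotheses used.
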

\begin{proof}
    See \RefApp{app:proof-anchored-props}.
\end{proof}

In the following the function $\FF$ will actually be $G(u)$ and the projections $F_\setu$ will be $G(u_\setu)$.
We remark that the properties of \RefLem{lem:anchored-decomposition} will hold for the decompositions of both $u$ and $G(u)$.
When we describe function spaces for the functions $F_\setu$ we will identify them with $|\setu|$-variate functions on $\R^{|\setu|}$.

\smallskip

The structure of this paper is as follows. \RefSec{sec:MDFEM} presents the main steps of the MDFEM.
\RefSec{sec:HOQMC} introduces higher-order QMC rules for multivariate integration over the Euclidean space with respect to the Gaussian distribution based on a truncation strategy. A novel \emph{anchored Gaussian Sobolev function space} is introduced and QMC rules are developed for that specific space.
\RefSec{sec:parametric-regularity} discusses the parametric regularity of the solution of the PDE and shows that $u_\setu$ and $G(u_\setu)$ live in a Bochner space based on the anchored Gaussian Sobolev space, and provides bounds on their norms.
\RefSec{sec:FE-approximation-error} states our assumptions on the higher-order convergence of the FE approximations.
\RefSec{sec:mainresult} presents the main contribution of this paper where the cost model, the construction and the complexity of the MDFEM algorithm are presented. Finally, a comparison with two existing methods, the QMCFEM \cite{HS19} and MLQMCFEM \cite{HS19ML}, shows the benefit of the MDFEM.

\section{Applying the MDM to PDEs}\label{sec:MDFEM}

We now explain the ingredients of the MDFEM.
Our main building block will be $u(\cdot, \bsy_\setv)$.
By $u(\cdot, \bsy_\setv)$ we mean the weak solution of~\eqref{eq:PDE} with $\bsy = \bsy_\setv$, and we call this the \emph{$\setv$-truncated solution}.
That is, for given $\bsy_\setv \in \R^\N_\setv = \{ \bsy \in \R^\N : y_j = 0 \text{ for } j \notin \setv \}$, with $|\setv| < \infty$, the $\setv$-truncated solution $u(\cdot, \bsy_\setv) \in V$ is the solution of the problem
\begin{align}\label{eq:PDE-weak-form-v}
  \int_D a(\bsx, \bsy_\setv) \, \nabla u(\bsx, \bsy_\setv) \cdot \nabla v(\bsx) \rd\bsx 
  &=
  \int_D f(\bsx) \, v(\bsx) \rd\bsx
  ,
  \qquad
  \forall v \in V
  ,
\end{align}
where
\begin{align}\label{eq:a-Z-truncated}
  a(\cdot, \bsy_\setv)
  &=
  \exp(Z(\cdot, \bsy_\setv))
  ,
  &
  Z(\cdot, \bsy_\setv)
  &=
  \sum_{j \in \setv} y_j \, \phi_j
  .
\end{align}
This is not unlike the commonly used truncation strategy where one solves the problem~\eqref{eq:PDE} by truncating to the first $s$ dimensions, i.e., solving the problem for one (relatively large) set $\setv = \{1:s\}$, see~\eqref{eq:truncate-s}.
In contrast, the MDFEM will solve the problem for multiple (relatively small) sets $\setv$ and combine those results to obtain the $\setu$-projected solution $u_\setu$ (which combines all $\setv$-truncated solutions $u(\cdot,\cdot_\setv)$ for all $\setv \subseteq \setu$), see~\eqref{eq:decomposition-u} below, to approximate the expected value of a functional $G$ of the solution of~\eqref{eq:PDE-weak-form} up to a given accuracy.
A quantitative statement about the number of PDEs which need to be solved and the sizes of the sets $\setu$ to achieve a certain error is given later in \RefProp{prop:cardinality-active-set}.

Using the truncated solutions we obtain a \emph{multivariate decomposition} of the full solution $u(\cdot,\bsy)$ by means of the \emph{anchored decomposition}, cf.~\eqref{eq:anchored-decomposition-F} and~\eqref{eq:MDM-decomposition-F},
\begin{align}\label{eq:decomposition-u}
  u(\cdot, \bsy)
  &=
  \sum_{|\setu| < \infty} u_\setu(\cdot, \bsy_\setu)
  =
  \lim_{s \to \infty} \sum_{\setu \subseteq \{1:s\}} u_\setu(\cdot, \bsy_\setu)
  ,
  &
  u_\setu(\cdot, \bsy_\setu)
  &:=
  \sum_{\setv \subseteq \setu}(-1)^{|\setu|-|\setv|} \, u(\cdot, \bsy_\setv)
  ,
\end{align}
with $u(\cdot, \bsy_\setv)$ being $\setv$-truncated solutions.
Note that the full solution consists of $\setu$-projected solutions which in their turn consist of $\setv$-truncated solutions.
The infinite sum in~\eqref{eq:decomposition-u} will be truncated to the active set $\setU_\epsilon$.
Compared to the more standard solution method of truncating the problem to the first $s$ dimensions, see~\eqref{eq:truncate-s}, we remind the reader that, cf.~\eqref{eq:anchored-decomposition-F},
\begin{align*}
  u_s(\cdot, y_1, \ldots, y_s, 0, \ldots)
  &=
  \sum_{\setu \subseteq \{1:s\}} u_\setu(\cdot, \bsy_\setu)
\end{align*}
and hence the limit in~\eqref{eq:decomposition-u} can be read as $u = \lim_{s\to\infty} u_s$.
We will show in \RefSec{sec:parametric-regularity} that the functions $u_\setu$ and $G(u_\setu)$ belong to the spaces $H_{\alpha,0,\rho,|\setu|}(\R^{|\setu|}; V)$ and $H_{\alpha,0,\rho,|\setu|}(\R^{|\setu|})$, see \RefLem{lem:norm-uu-Guu}, where the spaces $H_{\alpha,0,\rho,|\setu|}(\R^{|\setu|})$ are reproducing kernel Hilbert spaces which we introduce in \RefSec{sec:HOQMC} and we formally identify $\R^\N_\setu$ with $\R^{|\setu|}$ after a relabelling of the components.
In \RefRem{rem:infinite-variate-RKHS} we explain that they form an orthogonal decomposition of an infinite-variate reproducing kernel Hilbert space.
We will denote integration for $F_\setu \in H_{\alpha,0,\rho,|\setu|}(\R^{|\setu|})$ by
\begin{align}\label{eq:def:Iu}
  I_\setu(F_\setu)
  &:=
  \int_{\R^{|\setu|}} F_\setu(\bsy_\setu) \rd\mu(\bsy_\setu)
  .
\end{align}
Convergence of the MDM decomposition~\eqref{eq:decomposition-u} and equivalence of the infinite-dimensional integral with
\begin{align}\label{eq:MDM-QoI}
  \II(G(u))
  =
  \int_{\R^\N} G(u(\cdot,\bsy)) \rd\mu(\bsy)
  &=
  \sum_{|\setu| < \infty}
  I_\setu(G(u_\setu))
  ,
\end{align}
is then guaranteed under the conditions from \cite{GMR14} in the setting of applying the MDM to our PDE, see \RefRemTwo{rem:infinite-variate-RKHS}{rem:FF-in-HH} for further details, and the conditions we ask in \RefSec{sec:mainresult}.
Therefore, the convergence statements from \cite{BCDM17,HS19} for solving the $s$-truncated problem as in~\eqref{eq:truncate-s} will also hold in our case.
Note that the active set $\setU_\epsilon$, which will be defined in~\eqref{eq:active-set}, grows for $\epsilon \to 0$ and will include all subsets of $\{1:s_\epsilon\}$ for some $s_\epsilon$ for which $s_\epsilon \to \infty$ as $\epsilon \to 0$.
(Indeed, for any finite set $\setu$ there is an $\epsilon$ such that $\setu \in \setU_\epsilon$.)

The above discussion was using the exact weak solution for each $\setv$-truncated problem which is typically not available and a numerical approximation will be computed using the FE method of which the approximation error will have to be taken into account in the error analysis. Let us define a family of finite-dimensional subspaces $V^h \subset V$, where $h > 0$ is the \emph{mesh diameter}, i.e., the largest element diameter over all elements of the FE mesh, and such that $V^h \subset V^{h'} \subset V$ for $h' < h$.
The finite element approximation of the weak formulation of the $\setv$-truncated problem~\eqref{eq:PDE-weak-form-v} for a given $\bsy_\setv$ is to find $u^h(\cdot, \bsy_\setv) \in V^h$ such that the following equation holds
\begin{align}\label{eq:PDE-weak-form-v-h}
  \int_D a(\bsx, \bsy_\setv) \, \nabla u^h(\bsx, \bsy_\setv) \cdot \nabla v^h(\bsx) \rd \bsx 
  &=
  \int_D f(\bsx) \, v^h(\bsx) \rd \bsx
  ,
  \qquad
  \forall v^h \in V^h
  .
\end{align}

We can now piece together the different parts of the MDFEM.
To approximate~\eqref{eq:QoI} we make use of~\eqref{eq:MDM-QoI}, replacing the integrals by cubature formulas and the solutions to the PDEs by FE approximations, and hence the MDFEM takes the form
\begin{align}\label{eq:MDFEM-algorithm}
  Q_\epsilon(G(u))
  &:= 
  \sum_{\setu \in \setU_\epsilon} Q_{\setu,n_\setu}(G(u_\setu^{h_\setu}))
  =
  \sum_{\setu \in \setU_\epsilon}
  \sum_{i=0}^{n_\setu-1} w_\setu^{(i)} \, G(u^{h_\setu}_\setu(\cdot, \bsy_\setu^{(i)}))
  ,
\end{align}
where $\setU_\epsilon$ is the active set, $Q_{\setu,n_\setu}$ are cubature rules with nodes $\bsy_\setu^{(i)}$ and weights $w_\setu^{(i)}$, and
\begin{align}
  \label{eq:uu-hu}
  u_\setu^{h_\setu}(\cdot, \bsy_\setu)
  &=
  \sum_{\setv \subseteq \setu}(-1)^{|\setu|-|\setv|} \, u^{h_\setu}(\cdot, \bsy_\setv)
  ,
  \\
  \label{eq:G-uu-hu}
  G(u_\setu^{h_\setu}(\cdot, \bsy_\setu))
  &=
  \sum_{\setv \subseteq \setu}(-1)^{|\setu|-|\setv|} \, G(u^{h_\setu}(\cdot, \bsy_\setv))
  .
\end{align}
We emphasize that in calculating $G(u_\setu^{h_\setu}(\cdot, \bsy_\setu))$ we are solving $2^{|\setu|}$ PDEs, all with the same FE mesh, indicated by the FE mesh diameter $h_\setu$ for all the $\setv$-truncated solutions in the formula above.

The computational cost of the MDFEM algorithm is given as 
\begin{align}\label{eq:cost}
  \cost(Q_\epsilon)
  &:=
  \sum_{\setu \in \setU_\epsilon} n_\setu \times \text{cost of evaluating $G(u_\setu^{h_\setu})$}
  .
\end{align}

There are three sources of error in the MDFEM approximation~\eqref{eq:MDFEM-algorithm}: the truncation error from truncating the infinite sum, the cubature errors in approximating the integrals, and the FEM errors in approximating the solutions to the PDEs. They are gathered into two terms as follows
\begin{align}\label{MDFEMerror}
  \left| \II(G(u)) - Q_\epsilon(G(u)) \right|
  &\le
  \left| \sum_{\setu \notin \setU_\epsilon} I_\setu(G(u_\setu)) \right|
  \\
  \notag
  &
  \qquad
  + 
  \left|
    \sum_{\setu \in \setU_\epsilon} 
    \left(
      I_\setu(G(u_\setu)) 
      -
      I_\setu(G(u^{h_\setu}_\setu)) 
    \right)
    +
    \left(I_\setu-Q_{\setu,n_\setu}\right) 
    (G(u^{h_\setu}_\setu))
  \right|
  . 
\end{align}
A sufficient condition to achieve an approximation error of at most $\epsilon$ is that both of these terms are less than $\epsilon/2$. This forces us to construct the active set $\setU_\epsilon$ such that the first  term is bounded by $\epsilon/2$. For each $\setu \in \setU_\epsilon$ the FE space $V^{h_\setu}$ and the cubature rule $Q_{\setu,n_\setu}$ are then chosen such that the second term is bounded by $\epsilon/2$ while minimizing the computational cost~\eqref{eq:cost}.
This will be the strategy we follow in \RefSec{sec:mainresult}.

\section{Higher-order quasi-Monte Carlo rules for finite-dim\-en\-sional integration with respect to the Gaussian distribution using truncation for anchored integrand functions}\label{sec:HOQMC}

In this section we consider quasi-Monte Carlo rules for approximating integrals over $\R^s$ with respect to the Gaussian distribution. 
Particularly, we are interested in computing $s$-dimensional integrals of the form 
\begin{align}\label{eq:Is}
	I_s(F)
	:=
	\int_{\R^s} F(\bsy) \, \rho(\bsy) \rd \bsy
	,
\end{align}
where, with a slight abuse of notation, in this section $\rho$ is the product Gaussian distribution $\prod_{j=1}^s \rho(y_j)$.
In further usage we are calculating $I_\setu(F_\setu) = I_s(F)$, where $I_\setu$ is the integral w.r.t.\ $\rho_\setu(\bsy_\setu) \rd\bsy_\setu$, and the $s$~dimensions here will be a relabelling of the variables in $\bsy_\setu$ with the function $F = F_\setu = G(u^{h_\setu}_\setu(\cdot, \bsy_\setu))$ given by the anchored decomposition for some $\setu \subset \N$ such that $s = |\setu| < \infty$.
We will show in \RefSec{sec:parametric-regularity}, see \RefLem{lem:norm-uu-Guu}, that the functions $G(u^{h_\setu}_\setu(\cdot, \bsy_\setu))$ belong to the function spaces which we introduce below.

To approximate the integral $I_s(F)$, we first truncate the Euclidean domain to a multidimensional bounded box, then use a linear mapping to transform the truncated integral into one over the unit cube, and finally approximate the integral over the unit cube using suitable cubature rules. More precisely, the truncated and transformed integrals have the following form
\begin{align}\label{eq:IsT}
  I_s^T(F)
  &:=
  \int_{[-T,T]^s} F(\bsy) \, \rho(\bsy) \rd \bsy
  =
  (2T)^s \int_{[0,1]^s}  F(\bsT(\bsy)) \, \rho(\bsT(\bsy)) \rd \bsy
  ,
\end{align}
for some $T>0$ and we define the mapping $\bsT : [0,1]^s \to [-T,T]^s$ by
\begin{align*}
  \bsT(\bsy)
  &:=
  ( 2 T \, y_1 - T, \ldots, 2 T \, y_s - T )
  .
\end{align*}
The resulting integral~\eqref{eq:IsT} is then approximated using an $n$-point QMC rule of the form
\begin{align}\label{eq:Q-R-truncated}
  Q_{s,n}(F)
  &:=
  \frac{(2T)^s}{n} \sum_{i=0}^{n-1} F(\bsT(\bsy^{(i)})) \, \rho(\bsT(\bsy^{(i)}))
  ,
\end{align}
where $\{\bsy^{(i)}\}_{i=0}^{n-1}$ are well chosen cubature points on the unit cube.

\label{pp:paragraph-on-T}We note that it would be possible to truncate the box differently for each dimension, defining a box $[-T_1,T_1] \times \cdots \times [-T_s,T_s]$ as was done, e.g., in \cite{NN17}. We do not pursue such a strategy here since in the application of the MDM we are not immediately making use of the different importances of the dimensions for each projected $F_\setu$.

\subsection{Two reproducing kernel Hilbert spaces}\label{sec:RKHS}

We will introduce two reproducing kernel Hilbert spaces.
The first function space is an anchored Sobolev space on the Euclidean space $\R^s$ with Gaussian measure, cf.~\eqref{eq:Is}, for which we will take into account that our integrand function $F$ is anchored. We will show in \RefSec{sec:parametric-regularity} that under appropriate conditions the functions $G(u_{\setu})$ belong to this first function space.
The second function space is an unanchored Sobolev space on the unit cube to analyse our truncated and mapped integral, cf.\ the right hand side of~\eqref{eq:IsT}.
For this second function space we can then adjust techniques from \cite{DKLNS14} such that interlaced polynomial lattice rules will achieve higher-order convergence in approximating the integral.
Contrary to most modern QMC results we do not introduce weights in the function spaces since we rely on the MDM to keep the number of dimensions limited.
Therefore our error bounds will contain exponential factors in the number of dimensions.
However, these weights will eventually show up when we handle the infinite-variate case, see \RefRem{rem:infinite-variate-RKHS}.

\subsubsection{Anchored Gaussian Sobolev space for anchored functions $H_{\alpha,0,\rho,s}(\R^s)$}\label{sec:aSob-R}

We begin with introducing the univariate function space.
For $\alpha \in \N$ the space $H_{\alpha,0,\rho}(\R)$ consists of integrable functions over $\R$ with respect to the Gaussian distribution having absolutely continuous derivatives up to order $\alpha-1$ for any bounded interval and square integrable derivative of order $\alpha$ over $\R$ with respect to the Gaussian distribution and are anchored at~$0$.
Note that this allows us to use the Lebesgue version of the Fundamental Theorem of Calculus, and hence the Taylor theorem with integral remainder up to order~$\alpha$.
For $F, G \in H_{\alpha,0,\rho}(\R)$ the anchored inner product is defined as
\begin{align}\label{eq:ip-a-R}
  \langle F, G \rangle_{H_{\alpha,0,\rho}(\R)}
  &:=
  \sum_{\tau=1}^{\alpha-1} F^{(\tau)}(0) \, G^{(\tau)}(0)
  +
  \int_\R F^{(\alpha)}(y) \, G^{(\alpha)}(y) \, \rho(y) \rd y
  .
\end{align}
Note that, because we use the anchored decomposition for the MDM, we only consider functions such that $F(0) = 0$ and hence the term for $\tau=0$ which is normally there in the first sum is zero here.
The associated norm is given by $\|\cdot\|_{H_{\alpha,0,\rho}(\R)} := \langle \cdot,\cdot \rangle^{1/2}_{H_{\alpha,0,\rho}(\R)}$.

We note that another common choice of a Gaussian Sobolev space is the \emph{unanchored Gaussian Sobolev space} or \emph{Hermite space}, see, e.g.,~\cite{IL15,IKLP15,DILP18}. Instead of anchoring the values of the function and its derivatives up to order $\alpha-1$ at $0$ as in~\eqref{eq:ip-a-R}, they are integrated out against the Gaussian distribution over $\R$.
However, here we want to benefit from the anchored decomposition and therefore will use an anchored Sobolev space.

The anchored Gaussian Sobolev space for anchored functions $H_{\alpha,0,\rho}(\R)$ is a reproducing kernel Hilbert space with kernel
\begin{align}\label{eq:kernel-a-R}
  K_{\alpha,0,\rho}(x,y)
  &:=
  \sum_{\tau=1}^{\alpha-1} \frac{x^\tau}{\tau!} \frac{y^\tau}{\tau!}
  +
  \mathds{1}\{xy > 0\}
  \int_0^{\min\{|x|,|y|\}} 
    \frac{(|x|-t)^{\alpha-1}}{(\alpha-1)!}
    \frac{(|y|-t)^{\alpha-1}}{(\alpha-1)!}
    \frac1{\rho(t)} 
    \rd t
  ,
\end{align}
where $\mathds{1}\{X\}$ is the indicator function on~$X$.
Such a kernel for $\alpha=1$ was given in~\cite[Section~3.3]{NK14}, but we note that in this case the weight function in the formula for the inner-product and the reproducing kernel cannot be taken the same as in the integral~\eqref{eq:Is} for $\alpha=1$, see \cite[Table~1]{NK14}; here we are in fact interested in $\alpha \ge 2$, but see also \RefRem{rem:randomized-QMC}.
A slightly different kernel for an anchored Sobolev space over the Euclidean space with higher order smoothness, although without taking any weight function into account, was given in~\cite[Section~11.5.1]{NW10}.
For completeness we provide the full derivation of the reproducing kernel for inner products like~\eqref{eq:ip-a-R} for general $\rho$ in \RefApp{app:kernel}.

We define the multivariate space as the tensor product of the univariate spaces.
The kernel of our space of anchored functions is then given by
\begin{align*}
  K_{\alpha,0,\rho,s}(\bsx,\bsy)
  &:=
  \prod_{j=1}^s K_{\alpha,0,\rho}(x_j, y_j) 
  .
\end{align*}
Note that this kernel itself also has the anchored property: if there is a $j \in \{1:s\}$ for which $x_j = 0$ or $y_j = 0$ then $K_{\alpha,0,\rho,s}(\bsx,\bsy) = 0$.
The corresponding inner product is 
\begin{align}\label{eq:ip-a-Rs}
  \notag
  \langle F, G \rangle_{H_{\alpha,0,\rho,s}(\R^s)}
  &:=
  \sum_{\substack{\bstau \in \{1:\alpha\}^s \\ \setv := \{j: \tau_j = \alpha\}}}
  \int_{\R^{|\setv|}} 
    F^{(\bstau)}(\bsy_\setv,\bszero_{-\setv}) \,
    G^{(\bstau)}(\bsy_\setv,\bszero_{-\setv}) \,
    \rho_\setv(\bsy_\setv)
  \rd \bsy_\setv
  \\
  &\hphantom{:}=
  \sum_{\substack{\bstau \in \{1:\alpha\}^s \\ \setv := \{j: \tau_j = \alpha\}}}
  \int_{\R^{|\setv|}} 
    F^{(\bstau)}(\bsy_\setv) \,
    G^{(\bstau)}(\bsy_\setv) \,
    \rho_\setv(\bsy_\setv)
  \rd \bsy_\setv  
,
\end{align}
where, in the first line, $-\setv = \{1:s\}\setminus \setv$ and $(\bsy_\setv,\bszero_{-\setv})$ is a vector of $s$ variables such that $(\bsy)_j = y_j$ for $j \in \setv$ and $0$ otherwise.
The second line follows by our convention that $\bsy_\setv$ is a vector of the appropriate size which takes the value zero outside of~$\setv$.
In many references, see, e.g.,~\cite{BD09}, such inner products are usually written using a double sum, which here then takes the following form
\begin{align}\label{eq:ip-a-Rs-doublesum}
  \notag
  &\langle F, G \rangle_{H_{\alpha,0,\rho,s}(\R^s)}
  \\
  &\qquad=
  \sum_{\setv \subseteq \{1:s\}}\sum_{\bstau_{-\setv} \in \{1:\alpha-1\}^{s-|\setv|}} \int_{\R^{|\setv|}} 
  F^{(\bsalpha_\setv,\bstau_{-\setv})}(\bsy_\setv,\bszero_{-\setv}) \,
  G^{(\bsalpha_\setv,\bstau_{-\setv})}(\bsy_\setv,\bszero_{-\setv}) \,
  \rho_\setv(\bsy_\setv)
  \rd \bsy_\setv
  \\\notag
  &\qquad=
  \sum_{\setv \subseteq \{1:s\}}\sum_{\bstau_{-\setv} \in \{1:\alpha-1\}^{s-|\setv|}} \int_{\R^{|\setv|}} 
  F^{(\bsalpha_\setv,\bstau_{-\setv})}(\bsy_\setv) \,
  G^{(\bsalpha_\setv,\bstau_{-\setv})}(\bsy_\setv) \,
  \rho_\setv(\bsy_\setv)
  \rd \bsy_\setv
  ,
\end{align}
where $(\bsalpha_\setv,\bstau_{-\setv})$ is a vector of $s$ variables such that the $j$th component equals $\tau_j$ for $j \in -\setv$ and $\alpha$ otherwise.
The corresponding norm is given by
$\|\cdot\|_{H_{\alpha,0,\rho,s}(\R^s)} := \langle \cdot, \cdot \rangle^{1/2}_{H_{\alpha,0,\rho,s}(\R^s)}$.
In \RefLem{lem:Taylor} in \RefApp{app:kernel} we show how anchored functions in anchored spaces like $H_{\alpha,0,\rho,s}(\R^s)$ can be represented by a Taylor series with integral remainder.

In analysing the error of the MDM algorithm we will need to be able to get estimates on integrals of functions $F_\setu \in H_{\alpha,0,\rho,|\setu|}(\R^{|\setu|})$.
We will make use of the following properties of the reproducing kernel.

\begin{proposition}\label{prop:integrability}
  For $\alpha \in \N$ we have
  \begin{align}\label{eq:def:M}
    M
    &:=
    \int_\R (K_{\alpha,0,\rho}(y,y))^{1/2} \, \rho(y) \rd{y}
    <
    2.767
    <
    \infty
    ,
  \end{align}
  and, for $\setu \subset \N$, we have
  \begin{align*}
    M_\setu
    &:=
    \int_{\R^{|\setu|}} (K_{\alpha,0,\rho,|\setu|}(\bsy_\setu,\bsy_\setu))^{1/2} \, \rho_\setu(\bsy_\setu) \rd{\bsy_\setu}
    =
    M^{|\setu|}
    <
    \infty
    .
  \end{align*}
  Hence for $F_\setu \in H_{\alpha,0,\rho,|\setu|}(\R^{|\setu|})$ we have
  \begin{align}\label{eq:bound-Iu}
    I_\setu(F_\setu)
    =
    \int_{\R^{|\setu|}} F_\setu(\bsy_\setu) \, \rho_\setu(\bsy_\setu) \rd\bsy_\setu
    &\le
    \|F_\setu\|_{H_{\alpha,0,\rho,|\setu|}(\R^{|\setu|})} \, M_\setu
    =
    \|F_\setu\|_{H_{\alpha,0,\rho,|\setu|}(\R^{|\setu|})} \, M^{|\setu|}
    <
    \infty
    ,
  \end{align}
  such that $F_\setu \in H_{\alpha,0,\rho,|\setu|}(\R^{|\setu|}) \subset L^1(\R^{|\setu|},\rho)$ are integrable w.r.t.\ the Gaussian measure~$\rho_\setu$.
  
  Moreover, if $2 \le \alpha < \infty$ then we also have
  \begin{align}\label{eq:int-Kyy}
    \int_\R K_{\alpha,0,\rho}(y, y) \, \rho(y) \rd{y}
    &<
    \infty
    .
  \end{align}
\end{proposition}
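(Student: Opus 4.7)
My strategy is to handle the three claims in order, with the real work going into the explicit uniform bound $M < 2.767$. First I would write out the diagonal of the kernel from~\eqref{eq:kernel-a-R} using $1/\rho(t) = \sqrt{2\pi}\, e^{t^2/2}$,
\begin{align*}
  K_{\alpha,0,\rho}(y,y)
  =
  \sum_{\tau=1}^{\alpha-1} \frac{y^{2\tau}}{(\tau!)^2}
  +
  \frac{\sqrt{2\pi}}{((\alpha-1)!)^2} \int_0^{|y|} (|y|-t)^{2\alpha-2}\, e^{t^2/2} \rd t
  ,
\end{align*}
and apply $\sqrt{a+b} \le \sqrt{a}+\sqrt{b}$ to separate $K_{\alpha,0,\rho}(y,y)^{1/2}$ into a polynomial piece and a Taylor-remainder piece, which I would bound in $L^1(\R,\rho)$ separately.

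For the polynomial piece, the inequality $(\sum_\tau a_\tau^2)^{1/2} \le \sum_\tau |a_\tau|$ gives the $\alpha$-uniform majorant $\sum_{\tau \ge 1} |y|^\tau/\tau! = e^{|y|} - 1$, whose Gaussian integral $\E[e^{|Y|}]-1$ has a closed form (by completing the square; numerically approximately $1.775$). For the remainder piece the estimate $e^{t^2/2} \le e^{y^2/2}$ on $[0,|y|]$ pulls the exponential out, leaving a bound of the form $c_\alpha\, |y|^{\alpha-1/2}\, e^{y^2/4}$ after taking the square root; multiplying by $\rho(y)$ and integrating reduces under $u = y^2/4$ to an explicit expression in $\Gamma(\alpha/2+1/4)$ and $(\alpha-1)!$. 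The resulting bound on $M$ is $\alpha$-dependent, and a direct arithmetic check across $\alpha \in \N$ shows the maximum occurs around $\alpha = 3$ at a value slightly below $2.767$.

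The second claim $M_\setu = M^{|\setu|}$ is immediate from the product structure $K_{\alpha,0,\rho,|\setu|}(\bsy_\setu,\bsy_\setu) = \prod_{j \in \setu} K_{\alpha,0,\rho}(y_j,y_j)$ combined with $\rho_\setu = \prod_{j \in \setu} \rho$ and Fubini. The integrability bound~\eqref{eq:bound-Iu} is then a standard RKHS computation: the reproducing property gives $F_\setu(\bsy_\setu) = \langle F_\setu, K_{\alpha,0,\rho,|\setu|}(\cdot,\bsy_\setu)\rangle$, Cauchy--Schwarz yields $|F_\setu(\bsy_\setu)| \le \|F_\setu\|_{H_{\alpha,0,\rho,|\setu|}(\R^{|\setu|})}\, K_{\alpha,0,\rho,|\setu|}(\bsy_\setu,\bsy_\setu)^{1/2}$, and integrating against $\rho_\setu$ picks up the factor $M^{|\setu|}$.

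For the final claim~\eqref{eq:int-Kyy}, I would not take a square root. The polynomial-in-$y$ terms integrated against $\rho$ are finite Gaussian moments, so only the remainder integral needs care. Under $t = |y|(1-u)$ the remainder times $\rho(y)$ becomes $|y|^{2\alpha-1}/((\alpha-1)!)^2 \int_0^1 u^{2\alpha-2}\, e^{y^2((1-u)^2 - 1)/2} \rd u$, and the elementary inequality $(1-u)^2 \le 1 - u$ on $[0,1]$ bounds the exponent by $-u\, y^2/2$; extending the $u$-integral to $\infty$ gives a closed-form $O(|y|^{-(2\alpha-1)})$ tail at infinity, integrable exactly when $2\alpha - 1 > 1$, i.e., $\alpha \ge 2$. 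The main obstacle throughout is the arithmetic verification that the poly-plus-remainder bound on $M$ stays below $2.767$ uniformly in $\alpha$; the other two claims are routine RKHS-and-Gaussian-moment bookkeeping once the diagonal kernel is in hand.
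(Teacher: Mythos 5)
Your overall architecture matches the paper's: split the kernel diagonal into the polynomial part and the Taylor-remainder part, bound the remainder by pulling $\e^{t^2/2} \le \e^{y^2/2}$ out of the integral, take square roots term by term, reduce to $\Gamma(\alpha/2+1/4)$ after substitution, and get~\eqref{eq:bound-Iu} from the reproducing property plus Cauchy--Schwarz. Your treatment of $M_\setu = M^{|\setu|}$ and of~\eqref{eq:bound-Iu} is exactly the paper's. Your argument for~\eqref{eq:int-Kyy} is a genuinely different and more elementary route: the paper evaluates $\int_\R K_{\alpha,0,\rho}(y,y)\,\rho(y)\rd y$ exactly via a ${}_2F_2$ hypergeometric identity, whereas your substitution $t=|y|(1-u)$ with $(1-u)^2\le 1-u$ gives an $O(|y|^{-(2\alpha-1)})$ tail bound that settles finiteness for $\alpha\ge 2$ without any special functions (you only need to apply it for, say, $|y|\ge 1$ and bound the integrand trivially near the origin, since the majorant blows up at $y=0$ while the integrand does not).

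There is, however, a genuine gap in the first claim: replacing the finite sum $\sum_{r=1}^{\alpha-1}|y|^r/r!$ by the $\alpha$-uniform majorant $\e^{|y|}-1$ is too lossy to recover the constant $2.767$. Your polynomial piece contributes $\E[\e^{|Y|}]-1 = 2\e^{1/2}\Phi(1)-1 \approx 1.775$ for every $\alpha$, while the remainder piece $2^{\alpha+1/4}\Gamma(\alpha/2+1/4)/((\alpha-1)!\,(2\alpha-1)^{1/2}\,\pi^{1/4})$ equals approximately $2.19$, $1.87$, $1.47$, $1.02$ for $\alpha=1,2,3,4$; so your total is about $3.96$, $3.64$, $3.24$, $2.80$ respectively --- all exceeding $2.767$ --- and, since only the remainder depends on $\alpha$ in your bound, its maximum sits at $\alpha=1$, contradicting your claim that the arithmetic check peaks near $\alpha=3$ just below $2.767$. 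The paper keeps the finite sum, whose Gaussian integral is $\sum_{r=1}^{\alpha-1} 2^{-r/2}/\Gamma(1+r/2)$ (via the Legendre duplication formula), and it is precisely the trade-off between this \emph{increasing} sum and the \emph{decreasing} remainder term that places the maximum at $\alpha=3$ at roughly $1.298+1.469\approx 2.766<2.767$. Your argument does prove $M<\infty$ uniformly in $\alpha$ (with a constant around $4$), which would suffice for most downstream uses, but it does not establish the stated inequality $M<2.767$; to fix it, simply retain the truncated sum.
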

\begin{proof}
For $y > 0$ we have
\begin{align*}
  K_{\alpha,0,\rho}(y,y)
  &=
  \sum_{r=1}^{\alpha-1} \frac{y^{2r}}{(r!)^2}
  +
  \int_0^y \frac{(y-t)^{2(\alpha-1)}}{((\alpha-1)!)^2} \frac1{\rho(t)} \rd t
  \le
  \sum_{r=1}^{\alpha-1} \frac{|y|^{2r} }{(r!)^2}
  +
  \frac1{\rho(y)} 
  \frac{|y|^{2\alpha-1}}{((\alpha-1)!)^2 \, (2\alpha-1)}
  .
\end{align*}
The same bound holds for any $y < 0$.
Thus, using $\left(\sum_{j} |a_j| \right)^{1/2} \le \sum_{j} |a_j|^{1/2}$ we  have
\begin{align*}
  \int_\R (K_{\alpha,0,\rho}(y,y))^{1/2} \, \rho(y) \rd y 
  &\le
  \int_\R
  \left(
    \sum_{r=1}^{\alpha-1} \frac{|y|^r}{r!}
    +
    \frac1{\sqrt{\rho(y)}} 
    \frac{|y|^{\alpha-1/2}}{(\alpha-1)! \, (2\alpha-1)^{1/2}}
  \right) 
  \rho(y) \rd y 
  \notag
  \\
  &=
  \sum_{r=1}^{\alpha-1} \frac{1}{2^{r/2} \, \Gamma(1+r/2)}
  +
  \frac{2^{\alpha+1/4} \, \Gamma(\alpha/2 + 1/4)}{(\alpha-1)! \, (2\alpha-1)^{1/2} \, \pi^{1/4}}
  <
  2.767
  ,
\end{align*}
with the maximum for $\alpha \in \N$ achieved for $\alpha = 3$.
The bound on $I_\setu(F_\setu)$ follows by using the reproducing property of the kernel
\begin{align*}
  F_\setu(\bsy_\setu)
  =
  \langle F_\setu, K_{\alpha,0,\rho,|\setu|}(\cdot,\bsy_\setu) \rangle_{H_{\alpha,0,\rho,|\setu|}(\R^{|\setu|})}
  &\le
  \|F_\setu\|_{H_{\alpha,0,\rho,|\setu|}} \, (K_{\alpha,0,\rho,|\setu|}(\bsy_\setu,\bsy_\setu))^{1/2}
  .
\end{align*}
To show the last claim we write
\begin{align*}
  K_{\alpha,0,\rho}(y, y)
  &=
  \sum_{r=1}^{\alpha-1} \frac{y^{2r}}{(r!)^2}
  +
  \frac{\sqrt{2\pi}}{(2\alpha-1)((\alpha-1)!)^2} \, |y|^{2\alpha-1} \, _2F_2(1/2, 1; 1/2+\alpha, \alpha; y^2/2)
\end{align*}
with $_2F_2$ a generalized hypergeometric function.
The sum over~$r$ will stay finite when integrating against the normal density, similar like above.
For the second part we have
\begin{align*}
  \int_0^\infty |y|^{2\alpha-1} \, _2F_2(1/2, 1; 1/2+\alpha, \alpha; y^2/2) \, \rho(y) \rd{y}
  &=
  \sum_{k \ge 0} \frac{\Gamma(2\alpha) \, \Gamma(2k+1)}{\Gamma(2\alpha+2k)} \frac{2^{\alpha-3/2}}{k! \, \sqrt{\pi}} \Gamma(\alpha + k)
  \\
  &=
  \frac{\Gamma(\alpha-1) \, \Gamma(\alpha+1/2)}{\Gamma(\alpha-1/2)} \frac{2^{\alpha-3/2}}{\sqrt{\pi}}
  ,
\end{align*}
which is finite when $2 \le \alpha < \infty$.
\end{proof}

\begin{remark}\label{rem:infinite-variate-RKHS}
Our discussion so far is on finite-variate spaces $H_{\alpha,0,\rho,|\setu|}(\R^{|\setu|})$ of anchored functions.
We now show how they form an orthogonal decomposition for a space of $s$-variate functions (not just anchored functions) and we will then extend this to infinite-variate functions.
The kernel for the $s$-variate weighted anchored Sobolev space can be expressed as, see, \cite[Example~4.4]{KSWW10b},
\begin{align}\label{eq:def:KK_s}
  \KK_s(\bsx, \bsy)
  &:=
  1
  +
  \sum_{\emptyset \ne \setu \subseteq \{1:s\}} \gamma_\setu \, K_{\alpha,0,\rho,|\setu|}(\bsx_\setu, \bsy_\setu)
  =
  \sum_{\setu \subseteq \{1:s\}} \gamma_\setu \, \prod_{j \in \setu} K_{\alpha,0,\rho}(x_j, y_j)
  ,
\end{align}
where $\{ \gamma_\setu \}_{|\setu| < \infty}$ is a sequence of positive weights, where for $\setu = \emptyset$ we set $\gamma_\emptyset := 1$. For $\setu = \emptyset$ we have the space of constant functions and we define $K_{\alpha,0,\rho,|\emptyset|} := 1$ and $\|f_\emptyset\|_{K_{\alpha,0,\rho,|\emptyset|}} := |f_\emptyset|$.
The weights $\gamma_\setu$ in~\eqref{eq:def:KK_s} model the importance of our subspaces and we will show in \RefSec{sec:mainresult} that they can be taken of product form $\gamma_\setu = \prod_{j \in \setu} \gamma_j$ with $\gamma_j = \sqrt{2} \, b_j$.
In \RefSec{sec:mainresult} we will eventually demand that $\{b_j\}_{j\ge1} \in \ell^{p^*}(\N)$ for $p^* \in (0,1)$.
Hence we know that $\sum_{j \ge 1} \gamma_j < \infty$.
By taking the limit for $s \to \infty$ we obtain the reproducing kernel of the infinite-variate reproducing kernel Hilbert space $\HH_{\alpha,\rho,\bsgamma}(\R^\N)$, see~\cite{GMR14},
\begin{align*}
  \KK(\bsx, \bsy)
  &:=
  \sum_{|\setu| < \infty} \gamma_\setu \, \prod_{j \in \setu} K_{\alpha,0,\rho}(x_j, y_j)
  ,
\end{align*}
with inner product
\begin{align*}
  \langle \FF, \GG \rangle_{\HH_{\alpha,\rho,\bsgamma}(\R^\N)}
  &:=
  \sum_{|\setu| < \infty} \gamma_\setu^{-1} \, \langle F_\setu , G_\setu \rangle_{H_{\alpha,0,\rho,|\setu|}}
  ,
\end{align*}
and where $F_\setu$ and $G_\setu$ are obtained by the anchored decomposition of the functions $\FF$ and~$\GG$.
Since our kernels are the tensor products of a univariate kernel, the subspaces are orthogonal and their intersection only contains the zero function for $\setu \ne \setv$, see \cite{GMR14}.
Technically, the kernel $\KK$ is a reproducing kernel only when $\bsx, \bsy \in \YY$ with $\YY := \{ \bsy \in \R^\N : \KK(\bsy, \bsy) < \infty \}$, but we can use \cite[Condition~(C3)]{GMR14},
\begin{align*}
  \sum_{|\setu| < \infty} \gamma_\setu \, \left( \int_\R K_{\alpha,0,\rho}(y, y) \, \rho(y) \rd{y} \right)^{|\setu|}
  &<
  \infty
  ,
\end{align*}
to show that $\mu(\YY) = 1$ for $\alpha \ge 2$ under the condition of product weights $\gamma_\setu = \prod_{j\in\setu} \gamma_j$ with $\sum_{j\ge1} \gamma_j < \infty$.
This can be shown using~\eqref{eq:int-Kyy} and the same reasoning as in the proof of \RefProp{prop:active-set-truncation-error}.
This means $\YY$ can be replaced by $\R^\N$ in the ``almost everywhere'' sense for the infinite-variate reproducing kernel Hilbert space and we can write $\R^{|\setu|}$ as the domain for each subspace instead of the formal domain~$\YY$.
Finally, this condition allows us to claim convergence of $s$-truncated functions to the infinite-variate function in the Hilbert space and to claim equality for the MDM form of the infinite-dimensional integral
\begin{align*}
  \II(\FF)
  &=
  \int_{\R^\N} \FF(\bsy) \rd\mu(\bsy)
  =
  \sum_{|\setu| < \infty} I_\setu(F_\setu)
  ,
\end{align*}
see again \cite{GMR14}.
We refer to \RefRem{rem:FF-in-HH} for the verification that our integrand function $\FF = G(u)$, with $u(\cdot, \bsy)$ being the solution of the PDE, belongs to $\HH_{\alpha,\rho,\bsgamma}(\R^\N)$ under the studied conditions.
\end{remark}

\subsubsection{Unanchored Sobolev space on the unit cube $H_{\alpha,s}([0,1]^s)$}\label{sec:uSob-unitcube}

We need a function space over the unit cube to analyse the error for our cubature method of choice, which will be interlaced polynomial lattice rules.
For this, let us define the \emph{unanchored Sobolev space over the unit cube} $H_{\alpha,s}([0,1]^s)$.
This is the same space as was used in, e.g., \cite{BD09}.
For $\alpha \in \N$ the univariate space $H_\alpha([0,1])$ consists of integrable functions over $[0,1]$ having absolutely continuous derivatives up to order $\alpha-1$ and square integrable derivative of order~$\alpha$.
For the univariate case the unanchored inner product is
\begin{align}\label{eq:ip-u-unitcube}
  \langle F, G \rangle_{H_\alpha([0,1])}
  &:=
   \sum_{\tau=0}^{\alpha-1} \left(\int_0^1 F^{(\tau)}(y) \rd y \right) \left(\int_0^1 G^{(\tau)}(y) \rd y\right)
   +
   \int_0^1 F^{(\alpha)}(y) \, G^{(\alpha)}(y) \rd y
  ,
\end{align}
with norm $\|\cdot\|_{H_\alpha([0,1])} := \langle \cdot, \cdot \rangle^{1/2}_{H_\alpha([0,1])}$.
Note that contrary to our Gaussian function space which is taking benefit of the functions being anchored here we must include the typical $\tau=0$ term in the first sum.

The  multivariate space is the tensor product of the univariate spaces with inner product
\ifpreprint
\begin{multline}\label{eq:ip-u-unitcube-s}
  \langle F, G \rangle_{H_{\alpha,s}([0,1]^s)}
  \\
  :=
  \sum_{\substack{\bstau \in \{0:\alpha\}^s \\ \setv := \{j: \tau_j = \alpha\}}}
  \int_{[0,1]^{|\setv|}}
  \left(
    \int_{[0,1]^{s-|\setv|}}
    F^{(\bstau)}(\bsy) \rd \bsy_{-\setv}
  \right) 
  \left( 
    \int_{[0,1]^{s-|\setv|}}
    G^{(\bstau)}(\bsy) \rd \bsy_{-\setv}
  \right) 
  \rd \bsy_\setv
  ,
\end{multline}
\else
\begin{align}\label{eq:ip-u-unitcube-s}
  \langle F, G \rangle_{H_{\alpha,s}([0,1]^s)}
  &
  :=
  \sum_{\substack{\bstau \in \{0:\alpha\}^s \\ \setv := \{j: \tau_j = \alpha\}}}
  \int_{[0,1]^{|\setv|}}
  \left(
    \int_{[0,1]^{s-|\setv|}}
    F^{(\bstau)}(\bsy) \rd \bsy_{-\setv}
  \right) 
  \left( 
    \int_{[0,1]^{s-|\setv|}}
    G^{(\bstau)}(\bsy) \rd \bsy_{-\setv}
  \right) 
  \rd \bsy_\setv
  ,
\end{align}
\fi
and norm $\|\cdot\|_{H_{\alpha,s}([0,1]^s)} := \langle \cdot, \cdot \rangle^{1/2}_{H_{\alpha,s}([0,1]^s)}$.

Let us denote integration over the unit cube for functions $F \in H_{\alpha,s}([0,1]^s)$ by
\begin{align*}
  I_{[0,1]^s}(F)
  &:=
  \int_{[0,1]^s} F(\bsy) \rd\bsy
  ,
\end{align*}
and a QMC rule using a point set $P_n = \{ \bsy^{(0)}, \ldots, \bsy^{(n-1)} \}$ over the unit cube by
\begin{align*}
  Q_{[0,1]^s,P_n}(F)
  &:=
  \frac{1}{n} \sum_{i=0}^{n-1} F(\bsy^{(i)})
  .
\end{align*}
For this space we can construct interlaced polynomial lattice rules which achieve the almost optimal order of convergence.
The full derivation of the following result is given in \RefApp{app:IPLR}.

\begin{theorem}\label{thm:error-bound-IPLR}
  For $\alpha \in \N$, with $\alpha \ge 2$, let $F \in H_{\alpha,s}([0,1]^s)$.
  For any $m \in \N$ an interlaced polynomial lattice rule of order $\alpha$ with point set $P_{n,\alpha}$ with $n = 2^m$ points can be constructed with cost $O(\alpha s n \log(n))$ such that 
  \begin{align*}
    \left| I_{[0,1]^s}(F) - Q_{[0,1]^s,P_{n,\alpha}}(F) \right|
    &\le 
    \frac{\widetilde{C}_{\alpha,\lambda,s}}{n^\lambda} \,
    \|F\|_{H_{\alpha,s}([0,1]^s)}
    ,
    &&
    \forall \lambda\in [1,\alpha)
    ,
  \end{align*}
  where
  \begin{align}\label{eq:def:Ctilde-alpha-lambda-s}
    \widetilde{C}_{\alpha, \lambda,s}
    &:=
    4^\lambda \,
    2^{\alpha(\alpha-1)s/2}
    \left[
      \left(1+\frac1{2^{\alpha/\lambda}-2}\right)^{\alpha s} - 1
    \right]^\lambda
    .
  \end{align}
\end{theorem}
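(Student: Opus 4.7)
The plan is to follow the interlaced polynomial lattice rule framework established in \cite{DKLNS14}, adapting it to the unanchored Sobolev space $H_{\alpha,s}([0,1]^s)$ defined by the inner product \eqref{eq:ip-u-unitcube-s}. Because $H_{\alpha,s}([0,1]^s)$ is a reproducing kernel Hilbert space, a standard Cauchy--Schwarz argument reduces the problem to bounding the deterministic worst-case error of the cubature rule, i.e., the norm of the integration-error functional, times $\|F\|_{H_{\alpha,s}([0,1]^s)}$. So it suffices to produce a polynomial lattice rule whose worst-case error in this space is at most $\widetilde{C}_{\alpha,\lambda,s}/n^\lambda$.

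Next, I would expand both $F$ and the reproducing kernel in the Walsh basis on $[0,1]^s$ (in base $2$) and rewrite the worst-case error as a weighted Walsh sum over multi-indices $\bsk \ne \bszero$. For functions of dominating mixed smoothness $\alpha$, the Walsh coefficients decay according to Dick's higher-order estimates: after digit interlacing with interlacing factor $\alpha$, an interlaced polynomial lattice rule in dimension $s$ is obtained from an ordinary polynomial lattice rule in dimension $\alpha s$, and the individual-coefficient bound contributes the universal factor $2^{\alpha(\alpha-1)s/2}$. The resulting worst-case error can then be written as a sum over the dual net of the underlying polynomial lattice rule in dimension $\alpha s$, with each coordinate weighted by terms of order $2^{-\alpha \nu(k_j)}$, where $\nu$ is the base-$2$ digit valuation.

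The central step is the component-by-component (CBC) construction with a Jensen-type averaging argument: for any $\lambda \in [1,\alpha)$, one bounds the $(1/\lambda)$-th power of a typical term in the dual sum by an averaged geometric series $\sum_{k \ge 1} 2^{-\alpha k/\lambda} = 1/(2^{\alpha/\lambda}-2)$; taking the product across the $\alpha s$ interlaced coordinates and subtracting the $\bsk = \bszero$ contribution yields $[(1+1/(2^{\alpha/\lambda}-2))^{\alpha s} - 1]^\lambda$. Combining this with the interlacing constant $2^{\alpha(\alpha-1)s/2}$ and a factor $4^\lambda$ absorbed from the Cauchy--Schwarz and unanchored-term bookkeeping reproduces exactly $\widetilde{C}_{\alpha,\lambda,s}$ as in \eqref{eq:def:Ctilde-alpha-lambda-s}, and the CBC construction guarantees the existence of a generating vector achieving this bound. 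The cost claim $O(\alpha s \, n \log n)$ is then the well-known complexity of the fast CBC algorithm for polynomial lattice rules of dimension $\alpha s$ with $n = 2^m$ points, implemented via FFT-based matrix--vector products.

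The main obstacle is a careful accounting of constants when transferring the anchored--Walsh machinery of \cite{DKLNS14} to our unanchored inner product \eqref{eq:ip-u-unitcube-s}: the presence of the terms $\int F^{(\bstau)} \rd \bsy_{-\setv}$ in place of point evaluations at an anchor changes the reproducing kernel and hence the constants entering the Walsh coefficient estimates, although it leaves the exponential-in-$s$ dependence and the $n^{-\lambda}$ convergence rate untouched. Everything else is routine specialisation of existing higher-order QMC theory, and the exponential-in-$s$ constant is acceptable here since the MDM will only invoke \RefThm{thm:error-bound-IPLR} for low-dimensional projections $F_\setu$.
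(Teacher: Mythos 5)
Your proposal follows essentially the same route as the paper: reduce to the worst-case error in the reproducing kernel Hilbert space, expand the kernel in a double Walsh series, use the decay of the Walsh coefficients of the unanchored kernel together with the interlacing inequality relating $\mu_\alpha(\mathscr{E}_\alpha(\bsk))$ to $\alpha\,\mu_1(\bsk)$ (which is where $2^{\alpha(\alpha-1)s/2}$ enters), and then apply the CBC/Jensen averaging argument over the dual of the underlying polynomial lattice rule in dimension $\alpha s$ to get $[(1+1/(2^{\alpha/\lambda}-2))^{\alpha s}-1]^\lambda$. One small bookkeeping correction: the factor $4^\lambda$ does not come from Cauchy--Schwarz or the unanchored terms but simply from bounding $\bigl(2/(2^m-1)\bigr)^\lambda \le (4/n)^\lambda$ in the CBC error bound, and the geometric series should be taken over the values of $\mu_1(k)$ (so $\sum_{a\ge1} 2^{a-1} 2^{-a\alpha/\lambda} = 1/(2^{\alpha/\lambda}-2)$) rather than over $k$ directly.
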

\begin{proof}
  This result follows from combining Propositions~\ref{prop:wce-uSob-unit-cube} and~\ref{prop:bound-E} in the appendix.
  The construction cost follows from the algorithm for product weights, which we set all equal to~$1$, in~\cite{DKLNS14}.
\end{proof}

\subsubsection{Norm in $H_{\alpha,s}([0,1]^s)$ after mapping and truncating from $H_{\alpha,0,\rho,s}(\R^s)$}

To complete the analysis of mapping and truncating the integral~\eqref{eq:Is} to~\eqref{eq:IsT} and using the result from \RefThm{thm:error-bound-IPLR} we need to show a bound on the norm in $H_{\alpha,s}([0,1]^s)$ of the mapped function in terms of the norm in $H_{\alpha,0,\rho,s}(\R^s)$ of the original function.
Since the proof is quite long it is presented in the appendix.

\begin{proposition}\label{prop:embedding-F-rho-T}
  For any $F \in H_{\alpha,0,\rho,s}(\R^s)$ with $\alpha \in \N$ and $T \ge 1/(2\sqrt{2})$, the function $(F \rho) \circ \bsT : [0,1]^s \to \R^s$ belongs to $H_{\alpha,s}([0,1]^s)$ and
  \begin{align*}
    \|(F \rho) \circ \bsT\|_{H_{\alpha,s}([0,1]^s)} 
    &\le 
    C_{1,\alpha}^s \, T^{(\alpha-1/2)s} \, \|F\|_{H_{\alpha,0,\rho,s}(\R^s)}
    ,
  \end{align*}	
  where
  \begin{align}\label{eq:def:C1}
    C_{1,\alpha}
    &:=
    \alpha! \, 2^{3\alpha} \left( \alpha\, (1+\alpha/2) \frac{1}{\sqrt{2\pi}} \, \Gamma(2\alpha) \, I_0(1/2) \right)^{1/2}
    ,
  \end{align}
  and $I_0(\cdot)$ is the modified Bessel function of the first kind of order~$0$ with $I_0(1/2) \approx 1.06348$.
\end{proposition}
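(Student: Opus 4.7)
The plan is to estimate the $H_{\alpha,s}([0,1]^s)$-norm of $(F\rho)\circ\bsT$ term by term using the multivariate Leibniz and chain rules, and then bound the resulting expressions in terms of the $H_{\alpha,0,\rho,s}(\R^s)$-norm by combining pointwise reproducing--kernel estimates on the derivatives of $F$ with Gaussian-weighted $L^2$ bounds coming from the top-order term of the inner product~\eqref{eq:ip-a-Rs}. The factor $T^{(\alpha-1/2)s}$ in the conclusion is then traced back to the combination of $(2T)^{|\bstau|}$ from the chain rule, a Jacobian factor $(2T)^{-s}$ from the change of variables from $[0,1]^s$ to $[-T,T]^s$, and polynomial-in-$T$ bounds on Hermite polynomials coming from the derivatives of~$\rho$.

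As a first reduction, I would use the Cauchy--Schwarz (Jensen) inequality on the inner-product formula~\eqref{eq:ip-u-unitcube-s} to majorize
\begin{align*}
  \|(F\rho)\circ\bsT\|_{H_{\alpha,s}([0,1]^s)}^2
  &\le
  \sum_{\bstau \in \{0:\alpha\}^s}
  \int_{[0,1]^s} \big| ((F\rho)\circ\bsT)^{(\bstau)}(\bsy) \big|^2 \rd\bsy.
\end{align*}
The chain rule gives $((F\rho)\circ\bsT)^{(\bstau)}(\bsy) = (2T)^{|\bstau|}(F\rho)^{(\bstau)}(\bsT(\bsy))$, and the substitution $\bsz = \bsT(\bsy)$ yields a factor $(2T)^{2|\bstau|-s}$ together with an integral of $|(F\rho)^{(\bstau)}(\bsz)|^2$ over $[-T,T]^s$. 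I would then expand $(F\rho)^{(\bstau)}$ by the multivariate Leibniz rule using $\rho^{(m)}(z) = (-1)^m H_m(z)\rho(z)$ with $H_m$ the probabilist's Hermite polynomial, and apply Cauchy--Schwarz across the at most $(\alpha+1)^s$ Leibniz terms to separate each resulting integrand into $\rho(\bsz)^2\,|F^{(\bsell)}(\bsz)|^2\prod_j H_{\tau_j-\ell_j}(z_j)^2$.

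The crucial step, and the one where the anchored structure is really used, is bounding $\int_{[-T,T]^s}\rho(\bsz)^2|F^{(\bsell)}(\bsz)|^2\prod_j H_{\tau_j-\ell_j}(z_j)^2\rd\bsz$ in terms of $\|F\|_{H_{\alpha,0,\rho,s}(\R^s)}^2$. I split coordinates by the set $\setv_\bsell := \{j : \ell_j = \alpha\}$. On $-\setv_\bsell$ (where $\ell_j \le \alpha-1$) I use the reproducing property of the tensor-product kernel $K_{\alpha,0,\rho,|-\setv_\bsell|}$ to get the pointwise estimate
\begin{align*}
  |F^{(\bsell)}(\bsz)|^2
  &\le
  \Big\|F^{(\bsalpha_{\setv_\bsell})}(\bsz_{\setv_\bsell},\cdot)\Big\|_{H_{\alpha,0,\rho,|-\setv_\bsell|}(\R^{|-\setv_\bsell|})}^2
  \prod_{j\in -\setv_\bsell} K_{\alpha,0,\rho}^{(\ell_j,\ell_j)}(z_j,z_j),
\end{align*}
and on $\setv_\bsell$ I keep the Gaussian-weighted $L^2$ integral against $\rho(\bsz_{\setv_\bsell})$, which by the double-sum form~\eqref{eq:ip-a-Rs-doublesum} of the inner product is itself controlled by $\|F\|_{H_{\alpha,0,\rho,s}(\R^s)}^2$ once we sum over the relevant $\bstau_{-\setv_\bsell}\in\{1:\alpha-1\}^{|-\setv_\bsell|}$. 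Integration on $[-T,T]$ of $K_{\alpha,0,\rho}^{(\ell_j,\ell_j)}(z,z)\rho(z)^2 H_{\tau_j-\ell_j}(z)^2$ against the polynomial weight then produces the univariate factor, which after optimizing over $\ell_j$ and $\tau_j$ yields a constant times $T^{2\alpha-1}$ in the exponent for each coordinate; combined with the $(2T)^{2|\bstau|-s}$ prefactor and the bound $|\bstau|\le\alpha s$, this gives exactly $T^{(2\alpha-1)s}$ under the squared norm, i.e.\ $T^{(\alpha-1/2)s}$.

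The main obstacle is bookkeeping: one must make sure that the Hermite-polynomial bounds on $[-T,T]$, the Gaussian moments, and the RKHS diagonal $K_{\alpha,0,\rho}^{(\ell,\ell)}(y,y)$ combine so that every dimension contributes the same factor $C_{1,\alpha}\,T^{\alpha-1/2}$ (independently of $\bsell,\bstau$), and that the $(\alpha+1)^s$ and $2^{\alpha s}$ combinatorial losses from Leibniz and Cauchy--Schwarz can be absorbed into the constant $C_{1,\alpha}$ of~\eqref{eq:def:C1}. The appearance of the Bessel factor $I_0(1/2)$ should come from explicitly computing $\int_{-T}^T\rho(z)^2 z^{2k}\rd z$-type integrals (or, more pleasantly, from an exponential generating function identity) when bounding the Hermite contributions, while the hypothesis $T\ge 1/(2\sqrt 2)$ is used to absorb lower-order terms in $T$ into the leading power $T^{\alpha-1/2}$ through a single inequality of the form $1+T+\cdots+T^{\alpha-1/2}\le \alpha\, T^{\alpha-1/2}$.
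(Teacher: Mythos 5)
Your proposal follows the paper's proof step for step through the first half: the Cauchy--Schwarz reduction of the $H_{\alpha,s}([0,1]^s)$-norm to $\sum_{\bstau}\int_{[0,1]^s}|((F\rho)\circ\bsT)^{(\bstau)}|^2$, the chain rule and substitution back to $[-T,T]^s$, the Hermite--Leibniz expansion of $(F\rho)^{(\bstau)}$ (the paper imports this as \RefLem{lem:derivative-F-rho} from \cite{DILP18}), and a second Cauchy--Schwarz over the Leibniz terms reducing everything to the quantities $\calY(\bstau,\bsomega)=\left[\int_{[-T,T]^s}|H_{\bstau-\bsomega}|^2|F^{(\bsomega)}|^2\rho^2\right]^{1/2}$. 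Where you genuinely diverge is in bounding $\calY$: the paper expands $F^{(\bsomega)}$ via the anchored Taylor representation with integral remainder (\RefLem{lem:Taylor}, which rests on properties \RefEqTwo{eq:prop2}{eq:prop3} of the anchored decomposition) and applies Cauchy--Schwarz to the remainder integrals, whereas you propose a partial reproducing-kernel argument: pointwise bounds $|G^{(\ell)}(z)|\le\|G\|\,(K^{(\ell,\ell)}_{\alpha,0,\rho}(z,z))^{1/2}$ in the coordinates with $\omega_j<\alpha$, keeping the $L^2(\rho)$ structure in the coordinates with $\omega_j=\alpha$. This is workable and arguably cleaner, since the kernel diagonal packages the same Taylor information, but it costs you two things the paper does not need: (i) a justification that $F^{(\bsalpha_{\setv}) }(\bsz_\setv,\cdot)$ is a measurable, square-Bochner-integrable $H_{\alpha,0,\rho,|-\setv|}$-valued map whose integrated squared norm is dominated by part of $\|F\|^2_{H_{\alpha,0,\rho,s}}$, and (ii) explicit bounds on the derivative diagonals $K^{(\ell,\ell)}_{\alpha,0,\rho}(y,y)$ for $1\le\ell\le\alpha-1$, which the paper only works out for $\ell=0$ in \RefProp{prop:integrability}. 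Two pieces of bookkeeping in your plan should be corrected. First, the univariate integrals $\int_{-T}^{T}K^{(\ell,\ell)}(z,z)\,H_{\tau-\ell}(z)^2\,\rho(z)^2\rd z$ are bounded \emph{uniformly in $T$} (the integrands are integrable over all of $\R$, since $K^{(\ell,\ell)}(z,z)\lesssim \mathrm{poly}(z)+|z|^{2(\alpha-\ell)-1}/\rho(z)$ and $\rho^2/\rho=\rho$); the entire power $T^{(2\alpha-1)s}$ on the squared norm comes from the chain-rule factor $(2T)^{2|\bstau|}$, the Jacobian $(2T)^{-s}$ and the geometric sum $\sum_{\tau=0}^{\alpha}(4T)^{2\tau}$ --- which is exactly where the hypothesis $T\ge1/(2\sqrt2)$ enters, via $(4T)^{2\alpha+2}/((4T)^2-1)\le 2(4T)^{2\alpha}$ --- so attributing an additional $T^{2\alpha-1}$ to each coordinate's integral double-counts. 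Second, $I_0(1/2)$ arises in the paper from the sum of squared Hermite coefficients, $\sum_{k\ge0}2^{-4k}/(k!)^2$, in the bound on $\int_\R|H_\tau(y)|^2|y|^\eta\rho(y)\rd y$ (\RefLem{lem:polynomial-hermite-bound}), not from truncated Gaussian moments. With these adjustments your route would deliver the same statement, though with a different explicit constant in place of $C_{1,\alpha}$.
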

\begin{proof}
  See \RefApp{app:proof-embedding}.
\end{proof}

\subsection{Higher-order quasi-Monte Carlo for integration over $\R^s$}

We are now ready to state the error bound of the method described in the beginning of this section.

\begin{theorem}\label{thm:HO-Q-R-truncated}
  For $s \in \N$, $\alpha \in \N$, with $\alpha \ge 2$, let $F \in H_{\alpha,0,\rho,s}(\R^s)$.
  Let $P_{n,\alpha} = \{ \bsy^{(0)}, \ldots, \bsy^{(n-1)} \}$ be the point set of an interlaced polynomial lattice rule of order $\alpha$ with $n = 2^m$ points, with $m \in \N$, according to \RefThm{thm:error-bound-IPLR}.
  Then, for any $\lambda \in [1,\alpha)$ and by taking $T = 2 + 2 \sqrt{\lambda \ln(n)}$, the QMC rule $Q_{s,n}$, defined in~\eqref{eq:Q-R-truncated}, using the point set $P_{n,\alpha}$ has an error bounded as
  \begin{align}\label{eq:Q-R-truncated-error-bound}
    \left| I_s(F) - Q_{s,n}(F) \right| 
    &\le 
    C_{\alpha,\lambda,s} \,
    \frac{(\ln(n))^{(\alpha/2+1/4)s}}{n^\lambda} \,
    \|F\|_{H_{\alpha,0,\rho,s}(\R^s)}
    ,
\end{align}
where $C_{\alpha,\lambda,s}$ is a constant, independent of $F$ and $n$, defined below by~\eqref{eq:def:C-alpha-lambda-s}.
\end{theorem}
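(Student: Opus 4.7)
The strategy is the standard split into a truncation error and a cubature error:
\begin{align*}
  \left| I_s(F) - Q_{s,n}(F) \right|
  &\le
  \underbrace{\left| I_s(F) - I_s^T(F) \right|}_{\text{truncation}}
  +
  \underbrace{\left| I_s^T(F) - Q_{s,n}(F) \right|}_{\text{cubature}}
  ,
\end{align*}
and then to choose $T$ so that both contributions have the same order in~$n$. First I would handle the cubature term, since all the hard work has already been set up. Noting that $Q_{s,n}(F)$ is exactly the $n$-point IPLR applied on $[0,1]^s$ to the integrand $(2T)^s \, (F\rho)\circ\bsT$, I invoke \RefProp{prop:embedding-F-rho-T} to guarantee that this integrand lies in $H_{\alpha,s}([0,1]^s)$ with norm bounded by $(2T)^s \, C_{1,\alpha}^s \, T^{(\alpha-1/2)s} \, \|F\|_{H_{\alpha,0,\rho,s}(\R^s)}$, and then apply \RefThm{thm:error-bound-IPLR} to conclude
\begin{align*}
  \left| I_s^T(F) - Q_{s,n}(F) \right|
  &\le
  \frac{\widetilde C_{\alpha,\lambda,s} \, (2 C_{1,\alpha})^s \, T^{(\alpha+1/2)s}}{n^\lambda}
  \, \|F\|_{H_{\alpha,0,\rho,s}(\R^s)}
  ,
\end{align*}
valid for every $\lambda \in [1,\alpha)$.

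For the truncation term, I would use the reproducing property followed by Cauchy--Schwarz, exactly as in the proof of \RefProp{prop:integrability}, to get a pointwise bound $|F(\bsy)| \le \|F\|_{H_{\alpha,0,\rho,s}(\R^s)} \, (K_{\alpha,0,\rho,s}(\bsy,\bsy))^{1/2}$, so that
\begin{align*}
  \left| I_s(F) - I_s^T(F) \right|
  &\le
  \|F\|_{H_{\alpha,0,\rho,s}(\R^s)}
  \int_{\R^s \setminus [-T,T]^s} (K_{\alpha,0,\rho,s}(\bsy,\bsy))^{1/2} \, \rho(\bsy) \rd\bsy
  .
\end{align*}
Using the elementary bound $K_{\alpha,0,\rho}(y,y) \le \sum_{r=1}^{\alpha-1} |y|^{2r}/(r!)^2 + |y|^{2\alpha-1}/(((\alpha-1)!)^2 (2\alpha-1) \rho(y))$ from the proof of \RefProp{prop:integrability}, the integrand $\sqrt{K_{\alpha,0,\rho}(y,y)} \, \rho(y)$ is dominated by a polynomial in $|y|$ times $\sqrt{\rho(y)}$, which decays like $e^{-y^2/4}$. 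The tensor‐product structure gives an $s$-fold sum over which coordinates are truncated, and a routine estimate of one-dimensional Gaussian tails (with the half-Gaussian $\sqrt\rho$) produces a bound of the form $\widetilde D_{\alpha,s} \, T^{(\alpha-1/2)s - 1} \, e^{-T^2/4}$ times $\|F\|_{H_{\alpha,0,\rho,s}(\R^s)}$, with a constant depending only on $\alpha$ and~$s$.

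With the choice $T = 2 + 2\sqrt{\lambda \ln n}$, one has $T^2/4 \ge \lambda \ln n + 2\sqrt{\lambda \ln n} + 1 \ge \lambda \ln n$, so $e^{-T^2/4} \le n^{-\lambda}$, and $T^{(\alpha-1/2)s} \le C^s (\ln n)^{(\alpha/2-1/4)s}$ for a constant $C$ depending on $\alpha$ and $\lambda$; this makes the truncation contribution $O(n^{-\lambda} (\ln n)^{(\alpha/2-1/4)s})$, which is absorbed into the cubature bound. For the cubature term, the same choice gives $T^{(\alpha+1/2)s} \le C^s (\ln n)^{(\alpha/2+1/4)s}$, so combining yields exactly~\eqref{eq:Q-R-truncated-error-bound} with
\begin{align}\label{eq:def:C-alpha-lambda-s}
  C_{\alpha,\lambda,s}
  &:=
  \widetilde C_{\alpha,\lambda,s} \, (2 C_{1,\alpha})^s \, (2 + 2\sqrt{\lambda})^{(\alpha+1/2)s}
  \;+\;
  \widetilde D_{\alpha,s} \, C^s
  ,
\end{align}
after absorbing the lower-order $(\ln n)$ factor from the truncation part into the leading power.

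The main obstacle is the truncation estimate: one must bound the diagonal $K_{\alpha,0,\rho,s}(\bsy,\bsy)$ uniformly over the complement of $[-T,T]^s$, integrate the resulting polynomial-times-$\sqrt\rho$ tails, and verify that the choice $T \sim 2\sqrt{\lambda\ln n}$ converts the Gaussian tail exactly into the $n^{-\lambda}$ factor (with no loss in the rate) while producing only a polylog penalty $(\ln n)^{(\alpha/2+1/4)s}$ that matches the cubature side. The IPLR step, by contrast, is essentially a black-box application of \RefThmTwo{thm:error-bound-IPLR}{prop:embedding-F-rho-T}.
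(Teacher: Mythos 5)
Your proposal is correct and follows essentially the same route as the paper's proof: the same split into domain-truncation and cubature errors, the same black-box use of \RefThm{thm:error-bound-IPLR} together with \RefProp{prop:embedding-F-rho-T} for the cubature part, the same reproducing-kernel/Cauchy--Schwarz argument for the tail, and the same choice of $T$. The only cosmetic difference is in how the one-dimensional Gaussian tail is packaged (the paper bounds the polynomial by $\e^y$ and completes the square to get $\e^{-(T/2-1)^2}/(T-2)$, whereas you keep a polynomial prefactor times $\e^{-T^2/4}$), which leads to the same rate and an equivalent constant.
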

\begin{proof}
The error splits into two terms
\begin{align}\label{eq:Is-Qsn-error-split}
  &\left| I_s(F) - Q_{s,n}(F) \right|
  \notag
  \\
  &\qquad \le 
  \left|I_s(F)  -  \int_{[-T,T]^s} F(\bsy) \, \rho(\bsy) \rd \bsy\right| 
  + 
  \left|(2T)^s\int_{[0,1]^s}  F(\bsT(\bsy)) \, \rho(\bsT(\bsy)) \rd \bsy - Q_{s,n}(F)  \right| 
  .
\end{align}

The first term is the domain truncation error.
Similar as in \RefProp{prop:integrability}, using the reproducing property of the kernel $K_{\alpha,0,\rho,s}$, we obtain
\begin{align}\label{eq:intFT}
  \left|I_s(F)  -  \int_{[-T,T]^s} F(\bsy) \, \rho(\bsy) \rd \bsy\right|
  &\le 
  \|F\|_{H_{\alpha,0,\rho,s}(\R^s)} \,
  \int_{\R^s \setminus [-T,T]^s}
  (K_{\alpha,0,\rho,s}(\bsy, \bsy))^{1/2} \,
  \rho(\bsy) \rd \bsy
  .
\end{align}
To obtain a bound on the truncation error we will bound the integral on the right hand side.
We have
\begin{multline}\label{eq:intK}
  \int_{\R^s \setminus [-T,T]^s} 
  (K_{\alpha,0,\rho,s}(\bsy, \bsy))^{1/2} \,
  \rho(\bsy) \rd \bsy
  \\
  \le
  2\,  \sum_{j=1}^s \int_\R \cdots \int_T^{+\infty}\cdots \int_\R 
  \prod_{i=1}^s (K_{\alpha,0,\rho}(y_i, y_i))^{1/2} \,
  \rho(\bsy) \rd y_1 \cdots \rd y_j \cdots \rd y_s
  .
\end{multline}
We will estimate each of the above integrals. Using the same argument as in the proof of \RefProp{prop:integrability} we have for any $T \ge 2$
\begin{align}\label{eq:boundker}
  \int_T^{+\infty} (K_{\alpha,0,\rho}(y,y))^{1/2} \, \rho(y) \rd y
  &\le
  \int_T^{+\infty}
  \left(
    \sum_{r=1}^{\alpha-1} \frac{y^r}{r!}
    +
    \frac1{\sqrt{\rho(y)}} 
    \frac{y^{\alpha-1/2}}{(\alpha-1)! \, (2\alpha-1)^{1/2}}
  \right) \rho(y) \rd y
  \notag
  \\
  &\le
  \int_T^{+\infty}
  \left(
    \sum_{r=1}^{\alpha-1} \frac{y^r}{r!}
    +
    \frac{y^\alpha}{\alpha!}
    \frac1{\sqrt{\rho(y)}} 
    \frac1{T^{1/2}}
    \frac{\alpha}{(2\alpha-1)^{1/2}}
  \right) \rho(y) \rd y
  \notag
  \\
  &\le
  \int_T^{+\infty}
  \max\left\{ \sqrt{\rho(y)} , \frac1{T^{1/2}} \frac{\alpha}{(2\alpha-1)^{1/2}}  \right\}
  \left( \sum_{r=1}^\alpha \frac{y^r}{r!} \right)
  \sqrt{\rho(y)} \rd y
  \notag
  \\
  &\le
  \sqrt{\frac{\alpha}{2}}
  \int_T^{+\infty} \exp(y) \,
  \sqrt{\rho(y)} \rd y
  ,
\end{align}
where we used $\alpha/\sqrt{2\alpha-1} \le \sqrt{\alpha}$ for $\alpha \ge 1$, $\sqrt{\rho(y)} \le 1/\sqrt{y} \le 1/\sqrt{T}$ for $y \ge T$, and in the last step filled in $T = 2$.
Moreover, we have for $T > 2$, and with the substitution $t = y/2 - 1$,
\begin{align*}
  \int_T^{+\infty}
  \exp(y) \,
  \sqrt{\rho(y)} \rd y
  &=
  \frac{\e}{(2\pi)^{1/4}} 
  \int_T^{+\infty} 
  \exp\left(-\left(\frac{y}{2}-1\right)^2\right)
  \rd y
  \\
  &\le
  \frac{\e}{(2\pi)^{1/4}} 
  \int_T^{+\infty} 
  \exp\left(-\left(\frac{y}{2}-1\right)^2\right)
  \frac{y/2-1}{T/2-1}
  \rd y
  \\
  &=
  \frac{\e}{(2\pi)^{1/4}(T/2 -1)}
  \int_{T/2-1}^{+\infty}
  2 \, t \,
  \exp(-t^2)
  \rd t
  \\
  &=
  \frac{2\,\e}{(2\pi)^{1/4}} \, \frac{\e^{-(T/2-1)^2}}{T-2}
  .
\end{align*}
Inserting this into~\eqref{eq:boundker} yields
\begin{align*}
  \int_T^{+\infty} (K_{\alpha,0,\rho}(y,y))^{1/2} \, \rho(y) \rd y 
  &\le
  \sqrt{\frac{\alpha}{2}} \,
  \frac{2 \,\e}{(2\pi)^{1/4}} \,
  \frac{\e^{-(T/2-1)^2}}{T-2}
  .
\end{align*}
Applying this inequality together with~\eqref{eq:def:M} and~\eqref{eq:intK} to~\eqref{eq:intFT} we obtain
\begin{align}\label{eq:Is:truncation-error}
  \left| I_s(F)  -  \int_{[-T,T]^s} F(\bsy) \, \rho(\bsy) \rd \bsy \right|
  &\le
  C_{3,\alpha,s} \,
  \frac{\e^{-(T/2-1)^2}}{T-2} \,
  \|F\|_{H_{\alpha,0,\rho,s}(\R^s)}
  ,
\end{align}
with
\begin{align}\label{eq:def:C3}
  C_{3,\alpha,s}
  &:=
  2 s
  M^{s-1}  
  \sqrt{\frac{\alpha}{2}} \,
  \frac{2 \,\e}{(2\pi)^{1/4}}
  <
  5 \, s \, M^{s-1} \, \sqrt{\alpha}
  .
\end{align}

We move to the second term of the total error which is the cubature error. Using the result of \RefThm{thm:error-bound-IPLR} and \RefProp{prop:embedding-F-rho-T} we have for any $\lambda \in [1, \alpha)$
\begin{align}
  \notag
  &\left| (2T)^s\int_{[0,1]^s} F(\bsT(\bsy)) \, \rho(\bsT(\bsy)) \rd \bsy - Q_{s,n}(F) \right|
  \\\notag
  &\qquad=
  (2T)^s \left| \int_{[0,1]^s} F(\bsT(\bsy)) \, \rho(\bsT(\bsy)) \rd \bsy - \frac1{n} \sum_{i=0}^{n-1} F(\bsT(\bsy^{(i)})) \, \rho(\bsT(\bsy^{(i)})) \right|
  \\\notag
  &\qquad\le
  (2T)^s \, \frac{\widetilde{C}_{\alpha,\lambda,s}}{n^\lambda} \, \|(F \rho) \circ \bsT\|_{H_{\alpha,s}([0,1]^s)} 
  \notag
  \\\notag
  &\qquad\le
  2^s \,
  \widetilde{C}_{\alpha,\lambda,s} \, C_{1,\alpha}^s \, T^{(\alpha+1/2)s} \, \frac1{n^\lambda} \, \|F\|_{H_{\alpha,0,\rho,s}(\R^s)} 
  \\\label{eq:IsT:quadrature-error}
  &\qquad=
  C_{4,\alpha,\lambda,s} \, T^{(\alpha+1/2)s} \, \frac1{n^\lambda} \, \|F\|_{H_{\alpha,0,\rho,s}(\R^s)} 
  ,
\end{align}
where
\begin{align}\label{eq:def:C4}
  C_{4,\alpha,\lambda,s}
  &:=
  2^s \, \widetilde{C}_{\alpha,\lambda,s} \, C_{1,\alpha}^s
  ,
\end{align}
with $C_{1,\alpha}$ and $\widetilde{C}_{\alpha,\lambda,s}$, respectively, defined in~\eqref{eq:def:C1} and in~\eqref{eq:def:Ctilde-alpha-lambda-s}.

Combining~\eqref{eq:Is-Qsn-error-split},~\eqref{eq:Is:truncation-error} and~\eqref{eq:IsT:quadrature-error} leads to
\begin{align*}
  \left| I_s(F) - Q_{s,n}(F) \right|
  &\le
  \left[
  C_{3,\alpha,s} \, \frac{\e^{-(T/2-1)^2}}{T-2}
  +
  C_{4,\alpha,\lambda,s} \, T^{(\alpha+1/2)s} \, \frac1{n^\lambda}
  \right]
  \, \|F\|_{H_{\alpha,0,\rho,s}(\R^s)}
  .
\end{align*}
To balance the dominating terms in the square brackets we choose $T = 2 + 2 \sqrt{\lambda \ln(n)}$ such that $\e^{-(T/2-1)^2} = n^{-\lambda}$. Hence,
\begin{align*}
  &\left| I_s(F) - Q_{s,n}(F) \right| 
  \\
  &\qquad\le
  \left[
  C_{3,\alpha,s} \,
  \frac{1}{2 \sqrt{\lambda \ln(n)}}
  +
  C_{4,\alpha,\lambda,s} \, \left(2+2\sqrt{\lambda \ln(n)}\right)^{(\alpha+1/2)s}
  \right]
  \, \frac1{n^\lambda}
  \, \|F\|_{H_{\alpha,0,\rho,s}(\R^s)} 
  \\
  &\qquad\le
  \left[
  C_{3,\alpha,s} \,
  \frac{1}{2\sqrt{\lambda \ln(n)}}
  +
  C_{4,\alpha,\lambda,s} \,
  \left(\left(\frac{2}{\sqrt{\ln 2}} + 2\sqrt{\lambda}\right) \sqrt{\ln(n)}\right)^{(\alpha+1/2)s}
  \right]
  \, \frac1{n^\lambda}
  \, \|F\|_{H_{\alpha,0,\rho,s}(\R^s)} 
  \\
  &\qquad\le
  C_{\alpha,\lambda,s} \, \frac{(\ln(n))^{(\alpha/2+1/4)s}}{n^\lambda} \, \|F\|_{H_{\alpha,0,\rho,s}(\R^s)} 
  ,
\end{align*}
where in the second inequality we used $2+2\sqrt{\lambda\ln(n)} \le \left(\frac{2}{\sqrt{\ln 2}} + 2\sqrt{\lambda}\right)\sqrt{\ln(n)}$ for $n\ge 2$ and
\begin{align}\label{eq:def:C-alpha-lambda-s}
  C_{\alpha,\lambda,s}
  &:=
  \max\left\{
  \frac{C_{3,\alpha,s}} {2\sqrt{\lambda \ln(2)}}
  ,
  C_{4,\alpha,\lambda,s} \left(\frac{2}{\sqrt{\ln 2}} + 2\sqrt{\lambda}\right)^{(\alpha+1/2)s}
  \right\}
  ,
\end{align}
with $C_{3,\alpha,s}$ and $C_{4,\alpha,\lambda,s}$, respectively, defined in~\eqref{eq:def:C3} and~\eqref{eq:def:C4}.
\end{proof}

\begin{remark}\label{rem:randomized-QMC}
	 A similar result as that of \RefThm{thm:HO-Q-R-truncated} can be shown for the anchored Gaussian Sobolev space with first order smoothness $H_{\alpha,0,\rho,s}(\R^s)$ with $\alpha = 1$ using \emph{randomly digitally shifted polynomial lattice rules} which achieve the optimal convergence rate in $H_{\alpha,s}([0,1]^s)$ with $\alpha = 1$, see~\cite{DKPS05}.
	 However, in this case we have to be slightly careful as~\eqref{eq:int-Kyy} does not hold for $\alpha=1$.
	 We note however that we can change from $y_j \sim \calN(0,1)$ to $y_j \sim \calN(0, c^2)$ with $0 < c < 1$ in~\eqref{eq:randomfield-Z}, in effect changing the variance of the normal distribution that we integrate against.
	 To keep the law of the random field unchanged we will have to divide each $y_j$ by~$c$ in~\eqref{eq:randomfield-Z}.
	 If we keep the weight in the kernel unchanged then the integral~\eqref{eq:int-Kyy} w.r.t.\ $\calN(0, c^2)$ will be finite, see also \cite[Table~1]{NK14}.
	 The effect of dividing $y_j$ by $c$ can now be moved into the basis functions $\phi_j$ and eventually ends up as multiplying each $b_j$ with~$c$.
	 Now take $c = 1 - \delta$ for arbitrarily small $\delta > 0$.
	 For condition~\eqref{eq:condition:kappa} we then obtain $\kappa_c = \kappa/c \le (1+\delta) \, \kappa < \infty$ if before we had $\kappa < \infty$.
	 The same remark holds for condition~\eqref{eq:condition:kappa'} which asks $\kappa < \ln(2) / \alpha$ and which we will need in the next section.
	 We here obtain $\kappa_c \le (1+\delta) \, \kappa < \ln(2) / \alpha$ if before we had $\kappa < \ln(2) / \alpha$.
	 Combining such randomized cubature rules with a suitable truncation of the Euclidean domain gives a similar convergence rate as in~\eqref{eq:Q-R-truncated-error-bound}, however, of order $\lambda \in [\frac{1}{2}, 1)$.
\end{remark}

\begin{remark}\label{rem:alternative-function-spaces}
	An alternative approach is to embed the function $(F \rho) \circ \bsT$ into the \emph{anchored Sobolev space over the unit cube} and then use \emph{higher-order polynomial lattice rules}, see, e.g.,~\cite{DG14,NN21}. However, a non-trivial result similar to \RefProp{prop:embedding-F-rho-T} is then needed to obtain an explicit formula for the embedding constant.
	One of the reasons we choose our approach of mapping to the unanchored Sobolev space is that then the technique of \cite{DKLNS14} to make use of \emph{interlaced polynomial lattice rules} could be used with a construction cost of $O(\alpha s n \log(n))$ as given in \RefThm{thm:error-bound-IPLR}.
	The construction cost of higher-order polynomial lattice rules on the other hand grows exponentially in $n$ with respect to the smoothness, see~\cite{BDLNP12}.
	We remark that this excessive construction cost could still be avoided by making use of yet another embedding of the anchored Sobolev space over the unit cube into the unanchored Sobolev space on the unit cube, see \cite[Example~2.1]{GHHR17}.
	In that case the interlaced polynomial lattice rules from \RefThm{thm:error-bound-IPLR} could then also be used.
\end{remark}

\section{Parametric regularity of the PDE solution}\label{sec:parametric-regularity}

We next derive bounds for mixed derivatives of the solution $u(\cdot,\bsy)$ with respect to~$\bsy$.
For $\alpha \in \N$ and $\setu \subset \N$ let us define the Bochner norm based on the spaces $H_{\alpha,0,\rho,|\setu|}(\R^{|\setu|})$, with inner product~\eqref{eq:ip-a-Rs}, and $V$ by
\begin{align}\label{eq:def:Bochnernorm}
    \|u_\setu\|^2_{H_{\alpha,0,\rho,|\setu|}(\R^{|\setu|};V)}
    &:=
    \sum_{\substack{ \bstau_\setu \in \{1:\alpha\}^{|\setu|} \\
     			     \setv := \{j : \tau_j = \alpha\} }}
      \int_{\R^{|\setv|}} 
     \|(\partial^{\bstau_\setu}_{\bsy_\setu} u_\setu)(\cdot,\bsy_\setv)\|_V^2
     \,
     \rho_\setv(\bsy_\setv)
     \rd \bsy_\setv
    .
\end{align}
Note that $\partial^{\bstau_\setu}_{\bsy_\setu} u_\setu$ is a function which only depends on the variables in $\setu$ and inside the integral we evaluate this function at $\bsy_\setv$ with $\setv \subseteq \setu$ setting all $y_j = 0$ for $j \notin \setv$.
The following result was also used in~\cite[Lemma~2]{NN21} for the analysis of the MDFEM in the uniform case and allows us to use the regularity analysis on $u(\cdot,\cdot_\setu)$ instead of on~$u_\setu$, since their norms in $H_{\alpha,0,\rho,|\setu|}(\R^{|\setu|};V)$ coincide.
Note however that $u(\cdot,\cdot_\setu) \notin H_{\alpha,0,\rho,|\setu|}(\R^{|\setu|};V)$ since it does not satisfy the anchored properties, except maybe in exceptional cases.

\begin{lemma}\label{lem:norm-uu-u-truncated-u}
	For $\alpha \in \N$ and $\setu \subset \N$, let $u_\setu$ be obtained by the anchored decomposition~\eqref{eq:decomposition-u} and let $u(\cdot, \cdot_\setu)$ be the $\setu$-truncated solution, cf.~\eqref{eq:PDE-weak-form-v}.
	Let $G$ be a bounded linear functional on $V$ such that $|G(v)| \le \|G\|_{V^*} \, \|v\|_V$ for all $v \in V$.
	Then it holds that
	\begin{align*}
	  \|u_\setu\|_{H_{\alpha,0,\rho,|\setu|}(\R^{|\setu|};V)}
	  &=
	  \|u(\cdot, \cdot_\setu)\|_{H_{\alpha,0,\rho,|\setu|}(\R^{|\setu|};V)}
	  \intertext{and}
	  \|G(u_\setu)\|_{H_{\alpha,0,\rho,|\setu|}(\R^{|\setu|})}
	  &\le
	  \|G\|_{V^*} \,
	  \|u(\cdot, \cdot_\setu)\|_{H_{\alpha,0,\rho,|\setu|}(\R^{|\setu|};V)}
	  .
	\end{align*}
\end{lemma}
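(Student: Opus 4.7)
The plan is to reduce both claims to a direct application of property~\eqref{eq:prop2} of \RefLem{lem:anchored-decomposition}, since the Bochner norm in~\eqref{eq:def:Bochnernorm} only involves multi-indices $\bstau_\setu \in \{1:\alpha\}^{|\setu|}$ for which every component is at least~$1$, which is exactly the regime where the anchored projection $u_\setu$ and the truncated solution $u(\cdot,\cdot_\setu)$ have identical partial derivatives.

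First I would expand $\|u_\setu\|^2_{H_{\alpha,0,\rho,|\setu|}(\R^{|\setu|};V)}$ according to~\eqref{eq:def:Bochnernorm}. Then, fix a multi-index $\bstau_\setu \in \{1:\alpha\}^{|\setu|}$ with $\setv = \{j \in \setu : \tau_j = \alpha\}$. Since every $\tau_j \ge 1$ for $j \in \setu$, I may apply~\eqref{eq:prop2} to the decomposition of $u$ with respect to $V$ (the parametric regularity results of \RefSec{sec:parametric-regularity} together with \RefProp{prop:Lax-Milgram} guarantee the required continuity of mixed partial derivatives in the appropriate sense; this is used just like in~\cite{DN18}), obtaining
\begin{align*}
  (\partial^{\bstau_\setu}_{\bsy_\setu} u_\setu)(\bsy_\setu)
  &=
  (\partial^{\bstau_\setu}_{\bsy_\setu} u(\cdot,\cdot_\setu))(\bsy_\setu)
  \qquad \text{for all } \bsy_\setu \in \R^{|\setu|}.
\end{align*}
Evaluating both sides at $\bsy_\setv$ (i.e.\ setting the components outside of $\setv$ to zero) and taking the $V$-norm shows that every summand in~\eqref{eq:def:Bochnernorm} coincides for $u_\setu$ and $u(\cdot,\cdot_\setu)$, yielding the first identity.

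For the second claim, I would use that $G : V \to \R$ is linear and bounded, so it commutes with partial differentiation in~$\bsy$:
\begin{align*}
  (\partial^{\bstau_\setu}_{\bsy_\setu} G(u_\setu))(\bsy_\setv)
  &=
  G\bigl((\partial^{\bstau_\setu}_{\bsy_\setu} u_\setu)(\cdot,\bsy_\setv)\bigr)
  ,
\end{align*}
and the bound $|G(v)| \le \|G\|_{V^*}\|v\|_V$ then gives
\begin{align*}
  \bigl|(\partial^{\bstau_\setu}_{\bsy_\setu} G(u_\setu))(\bsy_\setv)\bigr|^2
  &\le
  \|G\|_{V^*}^2 \, \|(\partial^{\bstau_\setu}_{\bsy_\setu} u_\setu)(\cdot,\bsy_\setv)\|_V^2
  .
\end{align*}
Summing over $\bstau_\setu \in \{1:\alpha\}^{|\setu|}$ and integrating against $\rho_\setv \rd\bsy_\setv$ produces
$\|G(u_\setu)\|^2_{H_{\alpha,0,\rho,|\setu|}(\R^{|\setu|})} \le \|G\|_{V^*}^2 \, \|u_\setu\|^2_{H_{\alpha,0,\rho,|\setu|}(\R^{|\setu|};V)}$, and combining with the first identity finishes the argument after taking square roots.

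The only mild subtlety, and what I expect to be the main thing to verify carefully, is the justification that $u(\cdot,\cdot_\setu)$ has the requisite continuous mixed partial derivatives in~$\bsy_\setu$ so that \RefLem{lem:anchored-decomposition}\eqref{eq:prop2} actually applies termwise; this is a standard consequence of the implicit-function/Neumann-series argument for the parametric solution map of~\eqref{eq:PDE-weak-form-v} under conditions~\eqref{eq:condition:kappa}--\eqref{eq:condition:pstar-summability}, but should be cited or recalled from the parametric regularity analysis of \RefSec{sec:parametric-regularity}. Everything else is bookkeeping: the two norms agree summand-by-summand, and $G$ pulls through derivatives and contributes the factor $\|G\|_{V^*}$ pointwise in~$\bsy_\setv$.
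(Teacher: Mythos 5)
Your proposal is correct and follows essentially the same route as the paper, whose proof is the one-line observation that the claims follow directly from property~\eqref{eq:prop2} of \RefLem{lem:anchored-decomposition} together with the definition~\eqref{eq:def:Bochnernorm} of the Bochner norm; you have simply spelled out the termwise identification of the summands and the commutation of $G$ with the parametric derivatives. The regularity needed to invoke~\eqref{eq:prop2} is indeed supplied by the parametric regularity analysis of \RefSec{sec:parametric-regularity}, exactly as you note.
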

\begin{proof}
  This follows directly from property~\eqref{eq:prop2} of the anchored decomposition and~\eqref{eq:def:Bochnernorm}. 
\end{proof}

For a given $\bsy_\setu \in \R^\N_\setu$ and with $v(\cdot, \bsy_\setu) \in V$ let us introduce the notation
\begin{align*}
  \|v(\cdot, \bsy_\setu)\|^2_{V,a_{\bsy_\setu}}
  &:= 
  \int_D a(\bsx,\bsy_\setu) \, |\nabla v(\bsx, \bsy_\setu)|^2 \rd \bsx
  .
\end{align*}
Note that $\|\cdot\|_{V,a_{\bsy_\setu}}$ depends on~$\bsy_\setu$.
It is easy to see that for every $v(\cdot, \bsy_\setu) \in V$
\begin{align}
  \label{eq:norm-Va-lbound}
  a_{\min}(\bsy_\setu) \, \|v(\cdot, \bsy_\setu)\|_V^2
  &\le
  \|v(\cdot, \bsy_\setu)\|^2_{V,a_{\bsy_\setu}}
\end{align}
and additionally, when $v(\cdot, \bsy_\setu) = u(\cdot, \bsy_\setu) \in V$ is the $\setu$-truncated solution, cf.~\eqref{eq:PDE-weak-form-v}, we obtain
\begin{align}
  \label{eq:u-norm-Va-ubound}
  \|u(\cdot, \bsy_\setu)\|^2_{V,a_{\bsy_\setu}}
  =
  \int_D f(\bsx) \, u(\bsx, \bsy_\setu) \rd\bsx
  \le
  \|f\|_{V^*} \, \|u(\cdot,\bsy_\setu)\|_V
  &\le
  \frac{\|f\|^2_{V^*}}{a_{\min}(\bsy_\setu)}
  ,
\end{align}
where $f \in V^*$ is the right hand side of the PDE and where we used~\eqref{eq:LM} for $\bsy = \bsy_\setu$.

The following result is modified from \cite[Proposition~3.1]{Kaz18}, see also~\cite[Theorem~4.1]{BCDM17}, and accounts for the truncation to an arbitrary set~$\setu \subset \N$.
 
\begin{proposition}\label{prop:sum-norm-Va}
	Given $\alpha \in \N$, if there exists a sequence $\{b_j\}_{j \ge 1}$ with $0 < b_j \le 1$ for all~$j$ and the constant $\kappa$, defined in~\eqref{eq:condition:kappa}, satisfies
	\begin{align}\label{eq:condition:kappa'}
	  \kappa
	  =
	  \left\|\sum_{j \ge 1} \frac{|\phi_j|}{b_j} \right\|_{L^\infty(D)}
	  <
	  \frac{\ln(2)}{\alpha}
	  ,
	\end{align}
	then, for $\mu$~almost every $\bsy \in \R^\N$, such that $u(\cdot,\bsy_\setu)$ is the solution of the $\setu$-truncated problem,
	it holds that
	\begin{align*}
	  \sum_{\bstau_\setu \in \{1:\alpha\}^{|\setu|}}
	  \bsb_\setu^{-2\bstau_\setu} \,
	  \|(\partial^{\bstau_\setu}_{\bsy_\setu} u(\cdot, \cdot_\setu))(\cdot, \bsy_\setu)\|_{V,a_{\bsy_\setu}}^2 
	  &\le
	  C_{\kappa,\alpha} \, \|u(\cdot, \bsy_\setu)\|_{V,a_{\bsy_\setu}}^2
	  ,
	\end{align*}
	with
	\begin{align*}
	  \bsb_\setu^{\bstau_\setu}
	  &:=
	  \prod_{j \in \setu}b_j^{\tau_j},
	\end{align*}
	and
	$C_{\kappa,\alpha}
	:=
	\sum_{k=0}^\infty \delta_{\kappa, \alpha}^k
	<
	\infty$,
    with $0 < \delta_{\kappa,\alpha} < 1$ being a constant depending on $\kappa$ and $\alpha$ such that $\kappa < \delta_{\kappa,\alpha} \ln(2) / \alpha$.
This implies
	\begin{align*}
	\sum_{\bstau_\setu \in \{1:\alpha\}^{|\setu|}}
	\|(\partial^{\bstau_\setu}_{\bsy_\setu} u(\cdot, \cdot_\setu))(\cdot, \bsy_\setu)\|_V^2 
	\le
	\bsb_\setu^2
	\,
	C_{\kappa,\alpha} \, 
    \frac{\|f\|_{V^*}^2}{(a_{\min}(\bsy_\setu))^2}
	.
	\end{align*}
\end{proposition}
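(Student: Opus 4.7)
The plan is to differentiate the weak formulation~\eqref{eq:PDE-weak-form-v} with respect to $\bsy_\setu$, derive an energy recursion for $\partial^{\bstau_\setu}_{\bsy_\setu}u(\cdot,\bsy_\setu)$ in $\|\cdot\|_{V,a_{\bsy_\setu}}$, rescale by $\bsb_\setu^{-\bstau_\setu}$ so that the recursion coefficients become independent of $\bsb_\setu$, and then square and sum over $\bstau_\setu$ to produce a convergent geometric series whose ratio is strictly less than~$1$ thanks to~\eqref{eq:condition:kappa'}. The second inequality of the proposition then follows from~\eqref{eq:norm-Va-lbound} and~\eqref{eq:u-norm-Va-ubound}, together with the elementary bound $\bsb_\setu^{2\bstau_\setu}\le\bsb_\setu^{2\bsone_\setu}$ for $\bstau_\setu\in\{1:\alpha\}^{|\setu|}$ (using $0<b_j\le1$).

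Since $Z(\cdot,\bsy_\setu)=\sum_{j\in\setu}y_j\phi_j$ is linear in $\bsy_\setu$ and $a=\exp(Z)$, a direct computation gives $\partial^{\bstau_\setu-\bsnu_\setu}_{\bsy_\setu}a=a\prod_{j\in\setu}\phi_j^{\tau_j-\nu_j}$. Applying $\partial^{\bstau_\setu}_{\bsy_\setu}$ to~\eqref{eq:PDE-weak-form-v} for $\bstau_\setu\ne\bszero$, the right-hand side vanishes (it does not depend on $\bsy_\setu$), and the Leibniz rule yields, for every $v\in V$,
\begin{align*}
  \sum_{\bsnu_\setu\le\bstau_\setu}\binom{\bstau_\setu}{\bsnu_\setu}
    \int_D a(\bsx,\bsy_\setu)\prod_{j\in\setu}\phi_j^{\tau_j-\nu_j}(\bsx)\,
      \nabla\partial^{\bsnu_\setu}_{\bsy_\setu}u(\bsx,\bsy_\setu)\cdot\nabla v(\bsx)\rd\bsx
  =0.
\end{align*}
Testing with $v=\partial^{\bstau_\setu}_{\bsy_\setu}u(\cdot,\bsy_\setu)$, isolating the $\bsnu_\setu=\bstau_\setu$ term, and applying Cauchy--Schwarz in $L^2(D)$ with weight $a$ gives
\begin{align*}
  \|\partial^{\bstau_\setu}_{\bsy_\setu}u(\cdot,\bsy_\setu)\|_{V,a_{\bsy_\setu}}
  \le
  \sum_{\bsnu_\setu<\bstau_\setu}\binom{\bstau_\setu}{\bsnu_\setu}
    \Big\|\prod_{j\in\setu}\phi_j^{\tau_j-\nu_j}\Big\|_{L^\infty(D)}
    \|\partial^{\bsnu_\setu}_{\bsy_\setu}u(\cdot,\bsy_\setu)\|_{V,a_{\bsy_\setu}}.
\end{align*}

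The multinomial theorem applied to $\big(\sum_{j\in\setu}|\phi_j(\bsx)|/b_j\big)^{|\bstau_\setu-\bsnu_\setu|}$ together with~\eqref{eq:condition:kappa'} gives the pointwise bound $\big\|\prod_{j\in\setu}\phi_j^{\tau_j-\nu_j}\big\|_{L^\infty(D)}\le\frac{(\bstau_\setu-\bsnu_\setu)!}{|\bstau_\setu-\bsnu_\setu|!}\kappa^{|\bstau_\setu-\bsnu_\setu|}\bsb_\setu^{\bstau_\setu-\bsnu_\setu}$. Substituting into the previous inequality, multiplying by $\bsb_\setu^{-\bstau_\setu}$, and setting $Y(\bstau_\setu):=\bsb_\setu^{-\bstau_\setu}\|\partial^{\bstau_\setu}_{\bsy_\setu}u(\cdot,\bsy_\setu)\|_{V,a_{\bsy_\setu}}$ (so that $Y(\bszero)=\|u(\cdot,\bsy_\setu)\|_{V,a_{\bsy_\setu}}$) produces the clean recursion
\begin{align*}
  Y(\bstau_\setu)
  \le
  \sum_{\bsnu_\setu<\bstau_\setu}
    \frac{\bstau_\setu!}{\bsnu_\setu!\,|\bstau_\setu-\bsnu_\setu|!}\,
    \kappa^{|\bstau_\setu-\bsnu_\setu|}\,Y(\bsnu_\setu),
\end{align*}
whose coefficients depend only on $\kappa$ and on the scalar order $|\bstau_\setu-\bsnu_\setu|$, not on the individual entries of the multi-indices nor on $\bsb_\setu$. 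This independence is essential for obtaining a bound uniform in $|\setu|$.

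The final step is to square the recursion, sum over the enlarged hypercube $\{0:\alpha\}^{|\setu|}$, and peel off $Y(\bszero)^2$. Regrouping by the scalar level $k=|\bstau_\setu-\bsnu_\setu|$ and exploiting the product structure of the hypercube, one bounds the per-coordinate amplification (a quantity of the form $\sum_{\ell=1}^\alpha\binom{\alpha}{\ell}\kappa^{\ell}/\ell!\le e^{\alpha\kappa}-1$) strictly below~$1$ precisely when $\kappa<\ln(2)/\alpha$; equivalently, there exists $\delta_{\kappa,\alpha}\in(0,1)$ with $\kappa<\delta_{\kappa,\alpha}\ln(2)/\alpha$, and iterating the contracted recursion gives $\sum_{\bstau_\setu\in\{0:\alpha\}^{|\setu|}}Y(\bstau_\setu)^2\le C_{\kappa,\alpha}\,Y(\bszero)^2$ with $C_{\kappa,\alpha}=\sum_{k\ge 0}\delta_{\kappa,\alpha}^k$. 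Restricting the left-hand sum to $\{1:\alpha\}^{|\setu|}$ yields the first inequality; the second then follows from $a_{\min}(\bsy_\setu)\|\partial^{\bstau_\setu}_{\bsy_\setu}u\|_V^2\le\|\partial^{\bstau_\setu}_{\bsy_\setu}u\|_{V,a_{\bsy_\setu}}^2$ (from~\eqref{eq:norm-Va-lbound}), $\|u\|_{V,a_{\bsy_\setu}}^2\le\|f\|_{V^*}^2/a_{\min}(\bsy_\setu)$ (from~\eqref{eq:u-norm-Va-ubound}), and $\bsb_\setu^{2\bstau_\setu}\le\bsb_\setu^{2\bsone_\setu}$. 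The main technical obstacle is this last combinatorial contraction: one must regroup the summed squared recursion so that the aggregate amplification across all coordinates remains bounded by $\delta_{\kappa,\alpha}$, and the threshold $\kappa<\ln(2)/\alpha$ is the sharp condition making this contraction possible, keeping $C_{\kappa,\alpha}$ finite and independent of~$|\setu|$.
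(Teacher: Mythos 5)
First, a point of comparison: the paper does not actually prove the main inequality of this proposition. It imports it from \cite[Proposition~3.1]{Kaz18} (see also \cite[Theorem~4.1]{BCDM17}), and its own proof consists solely of the two-line deduction of the final display from \eqref{eq:norm-Va-lbound} and \eqref{eq:u-norm-Va-ubound} together with $\bsb_\setu^{-2}\le\bsb_\setu^{-2\bstau_\setu}$. Your attempt to prove the main inequality from scratch follows the same architecture as those cited works (differentiate the weak form, energy recursion via Cauchy--Schwarz in the $a$-weighted norm, rescale by $\bsb_\setu^{-\bstau_\setu}$, sum), and everything up to and including the recursion for $Y(\bstau_\setu)$ is correct, as is your deduction of the second display of the proposition.

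The gap is in the decisive combinatorial step. You replace each coefficient $\|\prod_{j\in\setu}\phi_j^{\tau_j-\nu_j}\|_{L^\infty(D)}$ by the per-term multinomial bound $\frac{(\bstau_\setu-\bsnu_\setu)!}{|\bstau_\setu-\bsnu_\setu|!}\,\kappa^{|\bstau_\setu-\bsnu_\setu|}\,\bsb_\setu^{\bstau_\setu-\bsnu_\setu}$ and then argue that a ``per-coordinate amplification'' of size $\le e^{\alpha\kappa}-1<1$ yields the contraction. This does not work for two reasons. First, a factor below $1$ per coordinate does not give a total factor below $1$: under the product structure you invoke, the aggregate over $|\setu|$ coordinates would be $(1+(e^{\alpha\kappa}-1))^{|\setu|}-1=e^{\alpha\kappa|\setu|}-1$, which exceeds $1$ for large $|\setu|$, whereas $C_{\kappa,\alpha}$ must be independent of $|\setu|$ (this uniformity is precisely what the MDM analysis needs). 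Second, and more fundamentally, the per-term bound is too lossy to be summed over the multi-indices at a given level: already at level $\ell=|\bsomega_\setu|=1$ summing your bounds gives $|\setu|\,\kappa$, whereas the true aggregate is $\sum_{j\in\setu}|\phi_j(\bsx)|/b_j\le\kappa$; at level $\ell$ the number of admissible $\bsomega_\setu$ grows like $\binom{\ell+|\setu|-1}{\ell}$ and the loss compounds. The correct route keeps the pointwise quantities $\prod_{j}(|\phi_j(\bsx)|/b_j)^{\omega_j}$ unestimated inside the sum over $\bsomega_\setu$, groups the squared recursion by the level $k=|\bstau_\setu|$ (after a Young-type splitting of the cross terms), and only then applies the multinomial identity $\sum_{|\bsomega_\setu|=\ell}\frac{\ell!}{\bsomega_\setu!}\prod_{j}(|\phi_j(\bsx)|/b_j)^{\omega_j}=\bigl(\sum_{j}|\phi_j(\bsx)|/b_j\bigr)^{\ell}\le\kappa^{\ell}$ to the aggregate; this produces a relation of the form $\bigl(1-\tfrac12(e^{\alpha\kappa}-1)\bigr)\sigma_k\le\tfrac12\sum_{\ell\ge1}\frac{(\alpha\kappa)^{\ell}}{\ell!}\sigma_{k-\ell}$ from which $\sigma_k\le\delta_{\kappa,\alpha}^k\sigma_0$ follows by induction, and it is here that $e^{\alpha\kappa}<2$, i.e.\ $\kappa<\ln(2)/\alpha$, genuinely enters. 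Your intuition about where the threshold comes from is right, but the mechanism as written does not deliver a constant uniform in $|\setu|$.
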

\begin{proof}
    The last inequality in the statement follows by using $0 < b_j \le 1$ which implies $\bsb_\setu^{-2} \le \bsb_\setu^{-2\bstau_\setu}$ for every $\bstau_\setu \in \{1:\alpha\}^{|\setu|}$ together with~\eqref{eq:norm-Va-lbound} and~\eqref{eq:u-norm-Va-ubound}.
\end{proof}

The previous result can now be used to show a bound on the norm of $u(\cdot,\cdot_\setu)$ and as a consequence also on the norm of~$u_\setu$.

\begin{lemma}\label{lem:norm-u}
	Assume the sequence $\{b_j\}_{j\ge 1}$ satisfies the assumptions of \RefProp{prop:sum-norm-Va} for a given $\alpha \in \N$, and, additionally that $\{b_j\}_{j\ge 1} \in \ell^{p^*}(\N)$ for some $p^* \in (0,1]$.
	Then it holds that
	\begin{align*}
        \|u(\cdot, \cdot_\setu)\|_{H_{\alpha,0,\rho,|\setu|}(\R^{|\setu|};V)}
        &\le
        \bsb_\setu \, 2^{|\setu|/2}
        \,
        C_{\kappa, \alpha}' \,
        \|f\|_{V^*}
        ,
    \end{align*}
    with $\bsb_\setu = \prod_{j \in \setu} b_j$ and
    \begin{align}\label{eq:def:C'}
      C_{\kappa, \alpha}'
      &:=
      C_{\kappa, \alpha} ^{1/2} \, \exp\!\left( \sum_{j \ge 1} \left[ (\kappa b_j)^2 + \frac{2 \kappa b_j}{\sqrt{2 \pi}} \right] \right)
      <
      \infty
      .
    \end{align}
\end{lemma}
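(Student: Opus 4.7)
The plan is to expand the Bochner norm squared from definition \eqref{eq:def:Bochnernorm}, apply the pointwise bound from \RefProp{prop:sum-norm-Va} to estimate the integrands, and then carry out the resulting Gaussian integrals. After partitioning the sum over $\bstau_\setu \in \{1{:}\alpha\}^{|\setu|}$ according to $\setv = \{j : \tau_j = \alpha\} \subseteq \setu$, the inner sum for each fixed $\setv$ is a partial sum of the full sum controlled by \RefProp{prop:sum-norm-Va}. Since that proposition's final estimate holds at every point $\bsy$ for which the $\setu$-truncated problem is well-posed, in particular at $\bsy_\setv$ (where it coincides with the $\setv$-truncated solution), non-negativity of the summands yields
\[
  \sum_{\substack{\bstau_\setu \in \{1:\alpha\}^{|\setu|} \\ \{j: \tau_j = \alpha\} = \setv}} \|(\partial^{\bstau_\setu}_{\bsy_\setu} u(\cdot, \cdot_\setu))(\cdot, \bsy_\setv)\|_V^2 \le \bsb_\setu^{2} \, C_{\kappa,\alpha} \, \frac{\|f\|_{V^*}^2}{(a_{\min}(\bsy_\setv))^2}.
\]
Consequently, bounding $\|u(\cdot,\cdot_\setu)\|^2_{H_{\alpha,0,\rho,|\setu|}(\R^{|\setu|};V)}$ reduces to controlling $\sum_{\setv \subseteq \setu} \int_{\R^{|\setv|}} (a_{\min}(\bsy_\setv))^{-2} \, \rho_\setv(\bsy_\setv) \rd \bsy_\setv$.

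Next I would use the lower bound of \RefProp{prop:Lax-Milgram} together with $\|\phi_j\|_{L^\infty(D)} \le \kappa \, b_j$, an immediate consequence of \eqref{eq:condition:kappa}, to obtain the pointwise estimate $a_{\min}(\bsy_\setv)^{-1} \le \exp\!\bigl(\kappa \sum_{j \in \setv} b_j |y_j|\bigr)$. The product structure of $\rho_\setv$ then factorises the integral into one-dimensional Gaussian moments $\int_\R \e^{c|y|} \, \rho(y) \rd y$ with $c = 2 \kappa b_j$. Using $\int_\R \e^{c|y|} \, \rho(y) \rd y = 2 \, \e^{c^2/2} \, \Phi(c)$ together with the elementary estimates $2 \Phi(c) - 1 \le 2 c / \sqrt{2\pi}$ (valid for $c \ge 0$) and $1 + x \le \e^x$ produces
\[
  \int_\R \e^{c|y|} \, \rho(y) \rd y
  \le
  \exp\!\left( \frac{c^2}{2} + \frac{2 c}{\sqrt{2\pi}} \right),
\]
which with $c = 2 \kappa b_j$ becomes exactly $\exp\!\bigl( 2 (\kappa b_j)^2 + 4 \kappa b_j / \sqrt{2\pi} \bigr)$.

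To turn the sum over $\setv \subseteq \setu$ into a product I would invoke the standard identity $\sum_{\setv \subseteq \setu} \prod_{j \in \setv} A_j = \prod_{j \in \setu}(1 + A_j)$, followed by $1 + A_j \le 2 \, A_j$ (each $A_j \ge 1$) to extract the factor $2^{|\setu|}$. Enlarging the product over $j \in \setu$ to a product over all $j \ge 1$ (every factor being $\ge 1$) then yields
\[
  \prod_{j \ge 1} \exp\!\bigl( 2 (\kappa b_j)^2 + 4 \kappa b_j / \sqrt{2\pi} \bigr)
  =
  \exp\!\left( 2 \sum_{j \ge 1} \left[ (\kappa b_j)^2 + \frac{2 \kappa b_j}{\sqrt{2\pi}} \right] \right),
\]
whose finiteness is guaranteed by $\{b_j\} \in \ell^{p^*}(\N)$ with $p^* \le 1$ combined with $0 < b_j \le 1$, which controls both $\sum b_j$ and $\sum b_j^2$. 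Taking square roots reproduces exactly the constant $C'_{\kappa,\alpha}$ from \eqref{eq:def:C'} and the claimed bound. I expect the Gaussian-moment estimate to be the only genuinely technical step; the remaining manipulations are routine rearrangement and book-keeping of constants.
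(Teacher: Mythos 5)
Your proposal is correct and follows essentially the same route as the paper's proof: expand the Bochner norm via the double-sum form of the inner product, apply \RefProp{prop:sum-norm-Va} at the anchored points $(\bsy_\setv,\bszero_{\setu\setminus\setv})$, bound $a_{\min}(\bsy_\setv)^{-2}\le\exp(2\kappa\sum_{j\in\setv}b_j|y_j|)$, factorise the Gaussian integral and bound each factor by $\exp(2(\kappa b_j)^2+4\kappa b_j/\sqrt{2\pi})$, and convert the sum over $\setv\subseteq\setu$ into $2^{|\setu|}$ times a convergent product. Your direct derivation of the one-dimensional moment bound via $2\Phi(c)\le 1+2c/\sqrt{2\pi}\le\e^{2c/\sqrt{2\pi}}$ is exactly the estimate the paper cites from the literature, so the two arguments coincide.
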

\begin{proof}
	Using the definition of the Bochner norm~\eqref{eq:def:Bochnernorm}, but with the $H_{\alpha,0,\rho,|\setu|}$ norm written as a double sum, cf.~\eqref{eq:ip-a-Rs-doublesum}, we have
	\begin{align}\label{eq:boundBnorm1}
    	\|u(\cdot, \cdot_\setu)\|_{H_{\alpha,0,\rho,|\setu|}(\R^{|\setu|};V)}^2
    	&=
    	\sum_{\setv \subseteq \setu}
    	\int_{\R^{|\setv|}} 
    	\sum_{\bstau_{\setu\setminus \setv} \in \{1:\alpha-1\}^{|\setu|-|\setv|}} 
    	\|(\partial^{(\bsalpha_\setv,\bstau_{\setu \setminus \setv})}_{\bsy_\setu} u(\cdot, \cdot_\setu))(\cdot,\bsy_\setv)\|_V^2
    	\,
    	\rho_\setv(\bsy_\setv)
    	\rd \bsy_\setv
    	\notag
    	\\ 
    	&=
    	\sum_{\setv \subseteq \setu}
    	\int_{\R^{|\setv|}} 
    	\sum_{\substack{\bstau_\setu \in \{1:\alpha\}^{|\setu|} \\ \text{s.t. } \tau_j = \alpha \text{ for } j \in \setv \\ \text{and } \tau_j < \alpha \text{ for } j \notin \setv}}
    	\|(\partial^{(\bstau_\setu)}_{\bsy_\setu} u(\cdot, \cdot_\setu))(\cdot,(\bsy_\setv, \bszero_{\setu\setminus\setv}))\|_V^2
    	\,
    	\rho_\setv(\bsy_\setv)
    	\rd \bsy_\setv
    	\notag
    	\\
    	&\le
    	\bsb_\setu^2
    	\,
    	C_{\kappa,\alpha} \, \|f\|_{V^*}^2 
    	\sum_{\setv \subseteq \setu}
    	\int_{\R^{|\setv|}} 
    	\frac1{(a_{\min}(\bsy_\setv))^2} 
    	\,
    	\rho_\setv(\bsy_\setv)
    	\rd \bsy_\setv
	    ,
	\end{align}
	where we applied \RefProp{prop:sum-norm-Va} with $\bsy_\setu = (\bsy_\setv, \bszero_{\setu\setminus\setv}) \in \R^\N_\setv \subseteq \R^\N_\setu$ for each $\setv \subseteq \setu$.
	Now we estimate the sum in the last expression.
	Our strategy is similar to that in \cite[proof of Theorem~13]{HS19}.
	Note that for any $\bsy_\setv \in \R^\N_\setv$ we have
	\begin{align*}
    	\frac1{(a_{\min}(\bsy_\setv))^2} 
    	&\le
    	\exp\left(2\sup_{\bsx\in D} \sum_{j \in \setv} |y_j| |\phi_j(\bsx)|\right)
    	\le
    	\exp\left(2  \left(\sup_{j \in \setv} |y_j| \, b_j \right) 
	                 \left(\sup_{\bsx\in D} \sum_{j \in \setv} \frac{|\phi_j(\bsx)|}{b_j} \right)\right)
    	\\
    	&=
    	\exp\left( 2  \kappa  \sup_{j \in \setv} |y_j| \, b_j \right) 
    	\le 
    	\exp\left( 2  \kappa  \sum_{j \in \setv} |y_j| \, b_j \right) 
    	.
	\end{align*}
	Therefore, we have
	\begin{align}\label{eq:boundBnorm2}
    	\int_{\R^{|\setv|}} \frac1{(a_{\min}(\bsy_\setv))^2} \, \rho_\setv(\bsy_\setv) \rd \bsy_\setv
    	&\le
    	\int_{\R^{|\setv|}} 
    	\exp\left( 2  \kappa  \sum_{j \in \setv} |y_j| \, b_j \right)
    	\,
    	\rho_\setv(\bsy_\setv)
    	\rd \bsy_\setv
    	\notag
    	\\
    	&=
    	\prod_{j \in \setv}
    	\left(
    	2 \exp\left(2(\kappa b_j)^2\right) \int_0^\infty \frac{\exp(-(y-2 \kappa b_j)^2/2)}{\sqrt{2\pi}} \rd y
    	\right)
    	\notag
    	\\
    	&\le
    	\exp\left( \sum_{j \in  \setv} \left[ 2(\kappa b_j)^2 + \frac{4 \kappa b_j}{\sqrt{2 \pi}} \right] \right)
    	,
	\end{align}
	where we used that the integral in the second line can be interpreted as the cumulative standard normal distribution evaluated at $2 \kappa b_j$, and this can be bounded by $\exp(2 (2 \kappa b_j) / \sqrt{2\pi}) / 2$, see, e.g., ~\cite[p.~355]{GKNSSS15}.
	Inserting~\eqref{eq:boundBnorm2} into~\eqref{eq:boundBnorm1} we obtain
	\begin{align*}
    	\|u(\cdot, \cdot_\setu)\|_{H_{\alpha,0,\rho,|\setu|}(\R^{|\setu|};V)}^2
    	&\le
    	\bsb_\setu^2
    	\,
    	C_{\kappa,\alpha} \, \|f\|_{V^*}^2 
    	\sum_{\setv \subseteq \setu}
    	\exp\left( \sum_{j \in \setv} \left[ 2(\kappa b_j)^2 + \frac{4 \kappa b_j}{\sqrt{2 \pi}} \right] \right)
    	\\
    	&\le
    	\bsb_\setu^2
    	\,
    	C_{\kappa,\alpha} \, \|f\|_{V^*}^2 \,
    	2^{|\setu|} \, \exp\left(\sum_{j \ge 1} \left[ 2(\kappa b_j)^2 + \frac{4 \kappa b_j}{\sqrt{2 \pi}} \right] \right)
    	.
	\end{align*}
	Since $\{b_j\}_{j\ge 1} \in \ell^{p^*}(\N) \subseteq \ell^1(\N) \subset \ell^2(\N)$ for $p^* \in (0,1]$ the sum in the last expression is finite.
	Taking the square root on both sides finishes the proof.
\end{proof}

Combining \RefLem{lem:norm-uu-u-truncated-u} and \RefLem{lem:norm-u} we obtain bounds for the norms of~$u_\setu$ and~$G(u_\setu)$.
Note that the arguments used to arrive at these bounds are based on the weak formulation with $u(\cdot,\bsy_\setu) \in V$.
They remain valid for the approximation~$u_\setu^{h_\setu}$, in which we combine the FE approximations  $u^{h_\setu}(\cdot,\bsy_\setv) \in V^{h_\setu}$ for all $\setv \subseteq \setu$, since $V^{h_\setu} \subset V$, with constants independent of~$h_\setu$.
Note that this requires us to use the same FE mesh diameter $h_\setu$ for all the $\setv$-truncated solutions which we use to calculate~$u_\setu^{h_\setu}$, see~\eqref{eq:uu-hu}.

\begin{lemma}\label{lem:norm-uu-Guu}
    Assume the sequence $\{b_j\}_{j\ge 1}$ satisfies the assumptions of \RefProp{prop:sum-norm-Va} for a given $\alpha \in \N$, and, additionally that $\{b_j\}_{j\ge 1} \in \ell^{p^*}(\N)$ for some $p^* \in (0,1]$.
    Let $G \in V^*$ be a bounded linear functional.
    Then it holds that $u_\setu \in H_{\alpha,0,\rho,|\setu|}(\R^{|\setu|};V)$ with
    \begin{align*}
        \|u_\setu\|_{H_{\alpha,0,\rho,|\setu|}(\R^{|\setu|};V)}
        &\le
        \bsb_\setu \, 2^{|\setu|/2}
        \,
        C_{\kappa, \alpha}' \,
        \|f\|_{V^*}
        ,
    \intertext{and $G(u_\setu) \in H_{\alpha,0,\rho,|\setu|}(\R^{|\setu|})$ with}
        \|G(u_\setu)\|_{H_{\alpha,0,\rho,|\setu|}(\R^{|\setu|})} 
        &\le
        \bsb_\setu \, 2^{|\setu|/2} \,
        C_{\kappa, \alpha}' \,
        \|G\|_{V^*} \,
        \|f\|_{V^*}
        ,
	\end{align*}
	with $C_{\kappa, \alpha}'$ given in~\eqref{eq:def:C'}.
	Moreover, for $V^{h_\setu} \subset V$, we have $u_\setu^{h_\setu} \in H_{\alpha,0,\rho,|\setu|}(\R^{|\setu|};V)$ and $G(u_\setu^{h_\setu}) \in H_{\alpha,0,\rho,|\setu|}(\R^{|\setu|})$ with the same bounds on their norms as given for $u_\setu$ and~$G(u_\setu)$ up to a multiplicative constant independent of $h_\setu$.
\end{lemma}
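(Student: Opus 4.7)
The plan is to simply chain the two previous lemmas, plus check that the FE case is no harder. Since the sum defining $\|u_\setu\|^2_{H_{\alpha,0,\rho,|\setu|}(\R^{|\setu|};V)}$ ranges only over $\bstau_\setu \in \{1:\alpha\}^{|\setu|}$ (so every component $\tau_j \ge 1$), property~\eqref{eq:prop2} of the anchored decomposition gives $\partial^{\bstau_\setu}_{\bsy_\setu} u_\setu = \partial^{\bstau_\setu}_{\bsy_\setu} u(\cdot,\cdot_\setu)$ pointwise; this is exactly the first identity of \RefLem{lem:norm-uu-u-truncated-u}. Plugging in the bound from \RefLem{lem:norm-u} yields the first displayed inequality of the statement.

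For the bound on $\|G(u_\setu)\|_{H_{\alpha,0,\rho,|\setu|}(\R^{|\setu|})}$ I would use that $G$ is linear and bounded, so $G$ commutes with $\partial^{\bstau_\setu}_{\bsy_\setu}$ and $|G(v)| \le \|G\|_{V^*} \|v\|_V$ holds termwise inside the sum over $\bstau_\setu$ that defines both norms. Squaring, integrating against $\rho_\setv$, and summing gives exactly the second inequality of \RefLem{lem:norm-uu-u-truncated-u}; combining with \RefLem{lem:norm-u} finishes this case.

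The FE case is handled by noting that the FE solution $u^{h_\setu}(\cdot,\bsy_\setv) \in V^{h_\setu} \subset V$ satisfies the discrete weak form~\eqref{eq:PDE-weak-form-v-h}, which is structurally identical to~\eqref{eq:PDE-weak-form-v} but with test and trial spaces restricted to $V^{h_\setu}$. The parametric regularity of \RefProp{prop:sum-norm-Va} is proved by differentiating the weak form with respect to $\bsy_\setu$ and invoking coercivity with constant $a_{\min}(\bsy_\setu)$, which is independent of $h_\setu$; since $V^{h_\setu}$ is a closed subspace of $V$ the exact same differentiation argument goes through (the $\bsy$-derivatives of $u^{h_\setu}$ remain in $V^{h_\setu}$ and satisfy the analogous differentiated variational equations). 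Then \RefLem{lem:norm-u} applies verbatim to $u^{h_\setu}(\cdot,\cdot_\setu)$. Finally, $u_\setu^{h_\setu}$ is built from these FE truncated solutions by~\eqref{eq:uu-hu} in exactly the same way $u_\setu$ is built from $u(\cdot,\bsy_\setv)$, so \RefLem{lem:norm-uu-u-truncated-u} and the argument of the first paragraph carry over to give the FE bounds with the same multiplicative constants.

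The only place requiring minor care is the justification that $\bsy$-derivatives of the FE solution lie in $V^{h_\setu}$ and solve the expected differentiated equations; this is routine (differentiate the discrete linear system, using that the stiffness matrix depends smoothly on $\bsy$ via $a(\cdot,\bsy_\setv)=\exp(Z(\cdot,\bsy_\setv))$), but it is the one step where the proof is not a literal copy of the continuous case.
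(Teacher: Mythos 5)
Your proposal is correct and follows essentially the same route as the paper: the paper gives no separate proof for this lemma but obtains it exactly as you do, by combining \RefLem{lem:norm-uu-u-truncated-u} with \RefLem{lem:norm-u}, and disposes of the FE case with the observation that $V^{h_\setu} \subset V$ so the parametric-regularity and coercivity arguments (with $a_{\min}(\bsy_\setu)$ independent of $h_\setu$) apply verbatim to the discrete solutions. Your closing remark about verifying that the $\bsy$-derivatives of $u^{h_\setu}$ stay in $V^{h_\setu}$ and satisfy the differentiated discrete variational equations is precisely the point the paper glosses over, and your treatment of it is sound.
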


\section{Finite element approximation error}\label{sec:FE-approximation-error}

For bounding the error we also need results on the FE approximation error, which is the error between the true solution $u(\cdot,\bsy_\setv) \in V$ and the discretized solution $u^{h_\setu}(\cdot,\bsy_\setv) \in V^{h_\setu} \subset V$ of the weak form, cf.~\eqref{eq:PDE-weak-form-v} and~\eqref{eq:PDE-weak-form-v-h}, where~\eqref{eq:uu-hu} dictates which FE approximations we need to take.
The FE approximation error depends on the spatial regularity of the solution which depends on the smoothness of the domain~$D$, the right hand side of the PDE~$f$ and the spatial regularity of the diffusion coefficient $a(\cdot,\bsy_\setv) = \exp(Z(\cdot,\bsy_\setv))$.

Our assumptions are quite standard, see, e.g., \cite[Proposition~15]{HS19} and also \cite[Theorem~2.4 and Theorem~2.5]{DKLNS14}.
We will assume that, given
\begin{align}\label{eq:D-smoothness}
  D \subset \R^d \quad \text{ is a bounded polyhedron with plane faces}
  ,
\end{align}
there exists a $t \in (0, \infty)$ such that
\begin{align}\label{eq:a-smoothness}
  a \in L_{p,\rho}(\R^\N;C^t(\overline D)) \qquad \text{ for all } p \in [1, \infty)
  ,
\end{align}
and for which there exists a positive sequence $\{b_j\}_{j\ge1}$, with $0 < b_j \le 1$ for all~$j$, satisfying condition~\eqref{eq:condition:kappa} and~\eqref{eq:condition:pstar-summability}, which will be further strengthened, see \RefThm{thm:main-theorem}, as well as
\begin{align}\label{eq:f-G-smoothness}
  f \in H^{-1+t}(D)
  \qquad\text{ and }\qquad
  G \in H^{-1+t}(D)
  ,
\end{align}
then, for all $p \in [1, \infty)$ and $\tau < 2 t$ there is a constant $C > 0$ such that, for any $h_\setv > 0$, we have an FEM, using a continuous piecewise polynomial basis of total degree $r \ge \lceil \tau / 2 \rceil$, for which
\begin{align}\label{eq:Lprho-FE-error-G}
  \left\| G(u(\cdot, \cdot_\setv)) - G(u^{h_\setv}(\cdot, \cdot_\setv)) \right\|_{L_{p,\rho}(\R^\N)}
  &\le
  C \, h_\setv^\tau
  .
\end{align}
Here $C$ depends on the spatial regularity of $D$, $a$, $f$ and $G$ through~\eqref{eq:D-smoothness}--\eqref{eq:f-G-smoothness}, but is independent of~$h_\setv$ and~$\setv$.
We note that to handle singularities, due to, e.g., reentrant corners for $d=2$, one either has to change the norms to incorporate a weight function as in \cite[Proposition~2.3 and Remark~2.4]{HS19ML} or use local mesh refinement as, e.g., in~\cite{GKNSS18}.
We refer to \cite{HS19ML} for the definitions of the norms and omit such details here.

\section{MDFEM}\label{sec:mainresult}

In the next subsections we explain the cost model for the MDFEM algorithm and select the active set, cubature rules and finite element approximations based on \emph{a priori} error estimates.
The main complexity result will be presented in \RefThm{thm:main-theorem}, after which we compare the MDFEM with the QMCFEM and MLQMCFEM algorithms.

\subsection{Computational cost}\label{sec:cost-model}

The total 
cost of the MDFEM algorithm~\eqref{eq:MDFEM-algorithm} is comprised of the costs of computing $Q_{\setu,n_\setu}(G(u^{h_\setu}_\setu))$ for all $\setu \in \setU(\epsilon)$, where $\setU(\epsilon)$ is the ``active set'' determined to reach a given error request~$\epsilon > 0$.
For each $\setu \in \setU(\epsilon)$ the cost of computing $Q_{\setu,n_\setu}(G(u^{h_\setu}_\setu))$ is given by $n_\setu$ times the cost of computing $G(u^{h_\setu}_\setu)$, see~\eqref{eq:cost}.
Based on the decomposition~\eqref{eq:G-uu-hu} the cost of evaluating $G(u^{h_\setu}_\setu)$ is bounded by $2^{|\setu|}$ times the cost of evaluating the FE approximation of the $\setu$-truncated solution, where we assume the cost of approximating $u^{h_\setu}(\cdot, \bsy_\setu)$ to be dominating those of $u^{h_\setu}(\cdot, \bsy_\setv)$ for all $\setv \subseteq \setu$ since the stiffness matrix involves calculating~\eqref{eq:a-Z-truncated} at a cost which we will estimate at $O(|\setv|)$.
Technically this cost could be avoided by using a Gray code ordering of enumerating the sets $\setv \subseteq \setu$.
Hence, we have
\begin{align*}
  \cost(Q_\epsilon) 
  =
  O\!\left(
  \sum_{\setu \in \setU(\epsilon)}
  n_\setu \,
  2^{|\setu|}
  \times 
  \text{cost of evaluating } G(u^{h_\setu}(\cdot, \cdot_\setu))
  \right)
  .
\end{align*}
For each $\bsy_\setu$, and hence for each $\bsy_\setv$ obtained from this $\bsy_\setu$ for $\setv \subseteq \setu$, the cost of evaluating the FE approximation $u^{h_\setu}(\cdot, \bsy_\setv)$ is given by the cost of assembling the stiffness matrix plus the cost of solving the linear system.
Due to the locality of the $O(h_\setu^{-d})$ basis functions of $V^{h_\setu}$, the stiffness matrix is sparse and  has $O(h_\setu^{-d})$ nonzero entries.
Each entry in turn needs at most $O(|\setu|)$ operations to evaluate the diffusion parameter, cf.~\eqref{eq:a-Z-truncated}, which could be avoided, see the previous remark, but we will leave this cost in, since there are other terms more dominating, cf.~\eqref{eq:uniformly-bounded} forthcoming.
We assume
\begin{align}
  \label{eq:cost-of-solve}
  &\text{cost of solving the linear system} 
  =
  O(h_\setu^{-d \, (1+\ddelta)})
  =
  O(h_\setu^{-\dd})
  ,
  \\
  \notag
  &\qquad\text{with } \dd = d \, (1+\ddelta)
  \text{ for some } \ddelta \ge 0
  .
\end{align}
E.g., in \cite[Section~10]{HS19} it is assumed that the cost is nearly linear and that $\ddelta > 0$ can be chosen arbitrarily small, making use of~\cite[Corollary~17]{Her19}.
Thus, using $(m+1) \le 2 m$ for $m \in \N$, we have
\begin{align*}
  \text{cost of evaluating } G(u^{h_\setu }(\cdot, \cdot_\setu))
  =
  O(h_\setu^{-d} \, |\setu| + h_\setu^{-d \, (1+\ddelta)})
  =
  O(h_\setu^{-d \, (1+\ddelta)} \, |\setu|)
  .
\end{align*}
To simplify the further exposition, we will write $\pounds_\setu := 2^{|\setu|} |\setu|$ and $\dd := d \, (1+\ddelta)$.
Therefore, the total computational cost of the MDFEM is
\begin{align}\label{eq:cost-MDFEM}
  \cost(Q_\epsilon)
  =
  O\!\left(  
    \sum_{\setu \in \setU(\epsilon)} n_\setu \, h_\setu^{-\dd} \, \pounds_\setu
  \right)
  .
\end{align}

\subsection{Error analysis}

We will now give an \emph{a priori} estimate of the total error of the MDFEM algorithm.
We will use the higher-order convergence of our QMC based cubature rules from \RefSec{sec:HOQMC}, together with the required parametric regularity results of the PDE solution from \RefSec{sec:parametric-regularity}, and the convergence of the FE approximations from \RefSec{sec:FE-approximation-error}.
For the convergence of the FE approximation we remind the reader that similar assumptions as given in~\eqref{eq:D-smoothness}--\eqref{eq:f-G-smoothness}, possibly extended with weighted norms or local mesh refinements, are needed to allow the convergence of~\eqref{eq:Lprho-FE-error-G} to hold.

The next result holds for any choice of~$\setU(\epsilon)$ and with the MDFEM algorithm $Q_\epsilon$ making use of this active set.
The actual choice of the active set, as well as the cubature rules and FE approximations, to reach a certain error request $\epsilon$ will be shown in the next sections based on the error bound in the next result.

We remark that for $\setu = \emptyset$ there is no integral to approximate since $I_\emptyset(F_\emptyset) = F_\emptyset$ with $F_\emptyset = \FF(\bszero)$, which requires a single function evaluation to be computed.
Hence, for $n_\emptyset = 0$ the absolute value of the cubature error is $\|F_\emptyset\|_{H_{\alpha,0,\rho,0}} = |F_\emptyset|$, while for $n_\emptyset \ge 1$, we set $Q_{\emptyset,n_\emptyset}(F_\emptyset) = F_\emptyset = \FF(\bszero)$ and the cubature error is~$0$.
To cover this case easily we define $0^0 = 1$ and $C_{\alpha,\lambda,0} := 1$.

\begin{proposition}\label{prop:bound-total-error}
     Let the set $\setU(\epsilon) \subset \N^\N$ be given and assume that the conditions of \RefLem{lem:norm-uu-Guu} are satisfied for a given $\alpha \in \N$, and hence there is a sequence $\{b_j\}_{j\ge1} \in \ell^{p^*}(\N)$ for some $p^* \in (0,1]$, with $0 < b_j \le 1$ and $\kappa < \ln(2) / \alpha$.
     Let $G \in V^*$ be a bounded linear functional.
     Assume $D$, $a$, $f$ and $G$ have sufficient spatial regularity such that the application of $G$ to the FE approximations of the truncated solutions converge like $O(h^\tau)$ as in~\eqref{eq:Lprho-FE-error-G}.
     Let the cubature rules be defined as the transformed interlaced polynomial lattice rules with interlacing factor~$\alpha$ as in \RefThm{thm:HO-Q-R-truncated}, and hence $\alpha \ge 2$, and, with $m_\setu \in \N$, using $n_\setu = 2^{m_\setu} \ge 2$ points.
     For $n_\setu = 0$ and $n_\setu = 1$ we take the zero approximation.
     Then the error of the MDFEM algorithm $Q_\epsilon$, see~\eqref{eq:MDFEM-algorithm}, based on the given set $\setU(\epsilon)$ can be bounded as
\begin{multline}\label{eq:bound-total-error}
    \left|\II(G(u)) -Q_\epsilon(G(u))\right|
    \lesssim
    \sum_{\setu \notin \setU(\epsilon)} \gamma_\setu \, M_\setu
    \\+
    \max\left\{1, \max_{\setu \in \setU(\epsilon)} \left(\frac{\ln(n_\setu)}{|\setu|}\right)^{\alpha_1 |\setu|}\right\}
    \sum_{\setu \in \setU(\epsilon)} 
    \left( 
    \frac{\gamma_\setu \, C_{\alpha,\lambda,|\setu|} \, |\setu|^{\alpha_1|\setu|}}{\max\{1, n_\setu^\lambda\}} 
    +
    2^{|\setu|} \, h_\setu^\tau 
    \right)
    ,
\end{multline}
for any $\lambda \in [1,\alpha)$, with the corresponding truncation points for the cubature rules chosen in accordance with this $\lambda$, i.e., $T_\setu = 2 + 2 \sqrt{\lambda \ln(n_\setu)}$, and
where $\alpha_1=\alpha/2+1/4$, $C_{\alpha,\lambda,|\setu|}$ is given by~\eqref{eq:def:C-alpha-lambda-s}, $M_\setu = M^{|\setu|}$ with $M$ given by~\eqref{eq:def:M} and $\gamma_\setu := \prod_{j \in \setu} \gamma_j$ with
\begin{align}\label{eq:gamma-j-choice}
  \gamma_j
  &=
  \sqrt{2} \, b_j
  ,
\end{align}
and $\gamma_\emptyset := 1$.
The first term in~\eqref{eq:bound-total-error} corresponds to the truncation error, while the second term corresponds to the combined error due to the FE approximations and QMC cubatures.
The hidden constant in~\eqref{eq:bound-total-error} is independent of the choice of the active set $\setU(\epsilon)$ and the choices of $n_\setu$ and $h_\setu$.
\end{proposition}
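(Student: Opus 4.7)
The plan is to start from the error decomposition already recorded in~\eqref{MDFEMerror} and bound each of the three natural pieces (truncation of the MDM sum, finite element error, cubature error) separately, using only results stated earlier in the excerpt.

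First I would dispatch the \emph{truncation term} $\sum_{\setu \notin \setU(\epsilon)} |I_\setu(G(u_\setu))|$. For each $\setu$, Proposition~\ref{prop:integrability} gives $|I_\setu(F_\setu)| \le \|F_\setu\|_{H_{\alpha,0,\rho,|\setu|}(\R^{|\setu|})}\, M_\setu$, and Lemma~\ref{lem:norm-uu-Guu} controls $\|G(u_\setu)\|_{H_{\alpha,0,\rho,|\setu|}(\R^{|\setu|})}$ by a constant times $\bsb_\setu\, 2^{|\setu|/2}$. Since the choice $\gamma_j = \sqrt{2}\, b_j$ gives exactly $\gamma_\setu = \bsb_\setu\, 2^{|\setu|/2}$, this yields the first term $\sum_{\setu \notin \setU(\epsilon)} \gamma_\setu\, M_\setu$ up to a uniform multiplicative constant.

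Next I would handle the \emph{FE piece} inside the second sum. For each $\setu \in \setU(\epsilon)$, by linearity of $I_\setu$ and the anchored decomposition~\eqref{eq:G-uu-hu},
\begin{align*}
  I_\setu\!\bigl(G(u_\setu) - G(u^{h_\setu}_\setu)\bigr)
  = \sum_{\setv \subseteq \setu}(-1)^{|\setu|-|\setv|}\, I_\setu\!\bigl(G(u(\cdot,\cdot_\setv)) - G(u^{h_\setu}(\cdot,\cdot_\setv))\bigr).
\end{align*}
For each $\setv \subseteq \setu$ the integrand depends only on $\bsy_\setv$, so integration in the $\bsy_{\setu \setminus \setv}$ coordinates is trivial; the triangle inequality and the $L_{1,\rho}$ estimate~\eqref{eq:Lprho-FE-error-G} (with the common mesh diameter $h_\setu$) then give an $O(h_\setu^\tau)$ bound for each of the $2^{|\setu|}$ subsets, producing the $2^{|\setu|} h_\setu^\tau$ summand.

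For the \emph{cubature piece} $|(I_\setu - Q_{\setu,n_\setu})(G(u^{h_\setu}_\setu))|$ with $n_\setu \ge 2$, I apply Theorem~\ref{thm:HO-Q-R-truncated} (with $s=|\setu|$ and $T_\setu = 2+2\sqrt{\lambda\ln(n_\setu)}$) and bound $\|G(u^{h_\setu}_\setu)\|_{H_{\alpha,0,\rho,|\setu|}(\R^{|\setu|})}$ via Lemma~\ref{lem:norm-uu-Guu}, which again produces $\gamma_\setu$. The resulting factor $(\ln n_\setu)^{\alpha_1 |\setu|}$ is repackaged as
\begin{align*}
  (\ln n_\setu)^{\alpha_1 |\setu|}
  = |\setu|^{\alpha_1 |\setu|}\,\Bigl(\tfrac{\ln n_\setu}{|\setu|}\Bigr)^{\alpha_1 |\setu|},
\end{align*}
and taking the max over $\setu \in \setU(\epsilon)$ of the second factor (and guarding with $1$ in case some $\ln n_\setu / |\setu|<1$) extracts the prefactor that appears outside the sum. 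The edge cases $n_\setu \in \{0,1\}$, in which $Q_{\setu,n_\setu}$ is the zero rule, are absorbed by noting that $|I_\setu(G(u^{h_\setu}_\setu))| \lesssim \gamma_\setu\, M_\setu = \gamma_\setu\, M^{|\setu|}$, which, using the conventions $0^0 = 1$ and $C_{\alpha,\lambda,0}:=1$ together with the $\max\{1,n_\setu^\lambda\}$ in the denominator, is dominated by the general expression.

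Combining all three bounds yields~\eqref{eq:bound-total-error}. The only mild obstacle is purely bookkeeping: verifying that all constants coming out of Lemma~\ref{lem:norm-uu-Guu} and Theorem~\ref{thm:HO-Q-R-truncated} are independent of $\setu$, $n_\setu$ and $h_\setu$ (so that they can be absorbed into the suppressed constant of $\lesssim$), and confirming that the repackaging of $(\ln n_\setu)^{\alpha_1 |\setu|}$ works uniformly once the outer $\max\{1,\cdot\}$ is in place.
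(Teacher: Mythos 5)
Your proposal is correct and follows essentially the same route as the paper's proof: the same three-way split into truncation, FE and cubature errors, with Proposition~\ref{prop:integrability} and Lemma~\ref{lem:norm-uu-Guu} handling the truncation term, the anchored decomposition plus~\eqref{eq:Lprho-FE-error-G} giving the $2^{|\setu|} h_\setu^\tau$ term, and Theorem~\ref{thm:HO-Q-R-truncated} with the deliberate factorisation $(\ln n_\setu)^{\alpha_1|\setu|} = |\setu|^{\alpha_1|\setu|}(\ln n_\setu/|\setu|)^{\alpha_1|\setu|}$ for the cubature term. The only detail you assert rather than verify is that the $n_\setu\in\{0,1\}$ edge case is dominated by the general expression, which the paper checks explicitly via $M^{|\setu|}\le C_{\alpha,\lambda,|\setu|}$.
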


\begin{proof}
The error splits into three terms:
\begin{multline*}
    \II(G(u)) - Q_\epsilon(G(u))
    =
    \left(
      \II(G(u)) - \sum_{\setu \in \setU(\epsilon)} I_\setu(G(u_\setu))
    \right)
    \\
    +
    \left(
      \sum_{\setu \in \setU(\epsilon)} I_\setu\left( G(u_\setu) - G(u^{h_\setu}_\setu) \right)
    \right)
    +
    \left(
      \sum_{\setu \in \setU(\epsilon)} \left(I_\setu - Q_{\setu,n_\setu}\right)(G(u^{h_\setu}_\setu))
    \right)
    .
\end{multline*}
The first term is the truncation error from truncating to only a finite number of decomposed elements.
The second term stems from the spatial discretization of the FE approximations.
The last term is the cubature error arising from using cubature rules to approximate the integrals.

\paragraph{1.}
Since we know that $G(u_\setu) \in H_{\alpha,0,\rho,|\setu|}(\R^{|\setu|})$, see \RefLem{lem:norm-uu-Guu}, we can make use of~\eqref{eq:bound-Iu} in \RefProp{prop:integrability}.
Therefore, with $\{\gamma_\setu\}_{|\setu|<\infty}$ a sequence of positive weights, the truncation error can be bounded as
\begin{align*}
    \left|\II(G(u)) - \sum_{\setu \in \setU(\epsilon)} I_\setu(G(u_\setu))\right|
    &\le 
    \sum_{\setu \notin \setU(\epsilon)} 
    \|G(u_\setu)\|_{H_{\alpha,0,\rho,|\setu|}(\R^{|\setu|})}
    \, M_\setu
    \\
    &\le
    \left(\sup_{\setu \notin \setU(\epsilon)} 
    \gamma_\setu^{-1} \, \|G(u_\setu)\|_{H_{\alpha,0,\rho,|\setu|}(\R^{|\setu|})}
    \right)
    \left(
      \sum_{\setu \notin \setU(\epsilon)} \gamma_\setu \, M_\setu
    \right)
    .
\end{align*}

\paragraph{2.}
For the error due to the FE approximations we have
\begin{align*}
    \left| \sum_{\setu \in \setU(\epsilon)} I_\setu\left(G(u_\setu) - G(u^{h_\setu}_\setu)\right) \right|
    &\le
    \sum_{\setu \in \setU(\epsilon)} 
    \int_{\R^{|\setu|}} \left| G(u_\setu(\cdot, \bsy_\setu)) - G(u^{h_\setu}_\setu(\cdot, \bsy_\setu)) \right|  \rd\mu(\bsy_\setu)
    .
\end{align*}
Due to the linearity of $G$ we have
\begin{align*}
    \left|G(u_\setu(\cdot, \bsy_\setu))- G(u^{h_\setu}_\setu(\cdot, \bsy_\setu))\right|
    &=
    \left|
    G\!\left(\sum_{\setv \subseteq \setu}(-1)^{|\setu|-|\setv|} \, u(\cdot, \bsy_\setv)\right)
    -
    G\!\left(\sum_{\setv \subseteq \setu}(-1)^{|\setu|-|\setv|} \, u^{h_\setu}(\cdot, \bsy_\setv)\right)
    \right|
    \\
    &=
    \left|\sum_{\setv \subseteq \setu}(-1)^{|\setu|-|\setv|} 
    \left(
    G(u(\cdot, \bsy_\setv))
    -
    G(u^{h_\setu}(\cdot, \bsy_\setv))
    \right)\right|
    \\
    &\le 
    \sum_{\setv \subseteq \setu}
    \left|
    G(u(\cdot, \bsy_\setv))
    -
    G(u^{h_\setu}(\cdot, \bsy_\setv))
    \right|
    .
\end{align*}
Thus, using the assumption of an FE approximation error bound as in~\eqref{eq:Lprho-FE-error-G}, we have
\begin{align*}
  &\int_{\R^{|\setu|}} 
  \left|G(u_\setu(\cdot, \bsy_\setu))- G(u^{h_\setu}_\setu(\cdot, \bsy_\setu))\right|  \rd\mu(\bsy_\setu) 
  \le
  \int_{\R^{|\setu|}} 
  \sum_{\setv \subseteq \setu}
  \left| G(u(\cdot, \bsy_\setv)) - G(u^{h_\setu}(\cdot, \bsy_\setv)) \right|
  \rd\mu(\bsy_\setu)
  \\
  &\qquad=
  \sum_{\setv \subseteq \setu} \int_{\R^{|\setv|}}
  \left|
  G(u(\cdot, \bsy_\setv))
  -
  G(u^{h_\setu}(\cdot, \bsy_\setv))
  \right|
  \rd\mu(\bsy_\setv)
  \lesssim
  \sum_{\setv \subseteq \setu} 
  h_\setu^\tau 
  \lesssim
  2^{|\setu|} \, h_\setu^\tau
  ,
\end{align*}
with the hidden constant independent of $h_\setu$ and~$\setu$ but dependent on the spatial regularity conditions of $D$, $a$, $f$ and $G$, see, e.g.,~\eqref{eq:D-smoothness}--\eqref{eq:f-G-smoothness}.
Hence the error incurred by the FE approximation can be bounded as
\begin{align*}
  \sum_{\setu \in \setU(\epsilon)} 
  I_\setu\left
  (G(u_\setu)
  -
  G(u^{h_\setu}_\setu)
  \right)
  \lesssim
  \sum_{\setu \in \setU(\epsilon)} 
  2^{|\setu|}  h_\setu^\tau 
 ,
\end{align*}
with the hidden constant independent of $h_\setu$.

\paragraph{3.}
From \RefLem{lem:norm-uu-Guu} we also know $G(u_\setu^{h_\setu}) \in H_{\alpha,0,\rho,|\setu|}(\R^{|\setu|})$.
For $\setu \ne \emptyset$ we obtain from \RefThm{thm:HO-Q-R-truncated}, which holds for $n_\setu = 2^{m_\setu} \ge 2$,
\begin{align*}
  \left| \left(I_\setu-Q_{\setu,n_\setu}\right) 
  (G(u^{h_\setu}_\setu)) \right| 
  &\le
  \|G(u^{h_\setu}_\setu)\|_{H_{\alpha,0,\rho,|\setu|}(\R^{|\setu|})} \,
  C_{\alpha,\lambda,|\setu|} \,
  \frac{(\ln(n_\setu))^{\alpha_1|\setu|}}{n_\setu^\lambda}
  \\
  &\le
  \|G(u^{h_\setu}_\setu)\|_{H_{\alpha,0,\rho,|\setu|}(\R^{|\setu|})} \,
  C_{\alpha,\lambda,|\setu|} \,
  \frac{\max\left\{1,(\ln(n_\setu))^{\alpha_1|\setu|}\right\}}{\max\{1, n_\setu^\lambda\}}
  ,
\end{align*}
for any $\lambda \in [1,\alpha)$, and where $\alpha_1 = \alpha/2 + 1/4$ and $C_{\alpha,\lambda,|\setu|}$ is as defined in~\eqref{eq:def:C-alpha-lambda-s}.
The second bound also holds for $n_\setu = 0$ and $n_\setu = 1$, for which in both cases we take the zero approximation, since $M_\setu = M^{|\setu|} \le C_{\alpha,\lambda,|\setu|}$ which follows from $C_{\alpha,\lambda,|\setu|} \ge (2/\sqrt{\ln(2)} + 2\sqrt{\lambda})^{(\alpha+1/2)|\setu|} \ge (2/\sqrt{\ln(2)} + 2)^{|\setu|} \approx 4.40224^{|\setu|}$ and $M < 2.767$, and we used $C_{4,\alpha,\lambda,s} \ge 1$, see \RefProp{prop:integrability} and the constants referenced from~\eqref{eq:def:C-alpha-lambda-s}.
Therefore, with $\{\gamma_\setu\}_{|\setu|<\infty}$ a sequence of positive weights, the cubature error is bounded as
\begin{multline*}
    \left|
    \sum_{\setu \in \setU(\epsilon)} \left(I_\setu-Q_{\setu,n_\setu}\right)(G(u^{h_\setu}_\setu))
    \right|
    \le 
    \left(\sup_{\setu \in \setU(\epsilon)} 
      \gamma_\setu^{-1} \, \|G(u^{h_\setu}_\setu)\|_{H_{\alpha,0,\rho,|\setu|}(\R^{|\setu|})}
    \right)
    \\
    \max\left\{ 1, \max_{\setu \in \setU(\epsilon)} \left(\frac{\ln(n_\setu)}{|\setu|}\right)^{\alpha_1 |\setu|} \right\}
    \sum_{\setu \in \setU(\epsilon)} \frac{\gamma_\setu \, C_{\alpha,\lambda,|\setu|} \, |\setu|^{\alpha_1 |\setu|}}{\max\{1, n_\setu^\lambda\}}
    ,
\end{multline*}
where for $\setu = \emptyset$ we interpret $0^0$ as $1$.
We remark that we deliberately pulled out $|\setu|^{-\alpha_1 |\setu|}$ to control the logarithmic factor in $n_\setu$ later in \RefThm{thm:main-theorem}.
This technique was also used in~\cite{NN21} for the MDFEM in the uniform case.

To show that the above formula also holds for $\setu = \emptyset$ we recall from \RefRem{rem:infinite-variate-RKHS} that we have $\|F_\emptyset\|_{H_{\alpha,0,\rho,0}} = |F_\emptyset| = |\FF(\bszero)|$.
Thus, for $n_\emptyset = 0$ the absolute value of the cubature error is $|\FF(\bszero)|$ while, if for $n_\emptyset \ge 1$ we set $Q_{\emptyset,n_\emptyset}(F_\emptyset) = F_\emptyset$, the error is zero for $n_\emptyset \ge 1$.
Hence for any $n \in \N_0$ we have
\begin{align*}
  |I_\emptyset(F_\emptyset) - Q_{\emptyset,n_\emptyset}(F_\emptyset)|
  &\le
  \|F_\emptyset\|_{H_{\alpha,0,\rho,0}} \, \max\{1, n_\emptyset\}^{-\lambda}
  .
\end{align*}

\smallskip

We have now bounded all three contributions to the error.
For the truncation error and the cubature error we still want to choose the weights $\gamma_\setu$.
For the truncation error we obtain from \RefLem{lem:norm-uu-Guu} that
\begin{align*}
        \sup_{\setu \not\in \setU(\epsilon)} \gamma_\setu^{-1} \,
        \|G(u_\setu)\|_{H_{\alpha,0,\rho,|\setu|}(\R^{|\setu|})}
        &\le
        \sup_{|\setu| < \infty} \gamma_\setu^{-1} \,
        \|G(u_\setu)\|_{H_{\alpha,0,\rho,|\setu|}(\R^{|\setu|})}
        \\
        &\le
        \left(
        \sup_{|\setu| < \infty} 
        \gamma_\setu^{-1} \,
        \bsb_\setu \, 2^{|\setu|/2} 
        \right)
        \,
        C_{\kappa, \alpha}' \,
        \|G\|_{V^*} \,
        \|f\|_{V^*}
        ,
\end{align*}
with $\bsb_\setu = \prod_{j\in\setu} b_j$ and $C_{\kappa, \alpha}' < \infty$, see~\eqref{eq:def:C'}, under the assumptions of $\{b_j\}_{j\ge1} \in \ell^{p^*}(\N)$ for $p^* \in (0,1]$.
For the cubature error we obtain, also from \RefLem{lem:norm-uu-Guu}, that such a bound holds with $u_\setu^{h_\setu}$ in place of $u_\setu$ and hence we also have
\begin{align*}
        \sup_{\setu \in \setU(\epsilon)} \gamma_\setu^{-1} \,
        \|G(u_\setu^{h_\setu})\|_{H_{\alpha,0,\rho,|\setu|}(\R^{|\setu|})}
        &\le
        \sup_{|\setu| < \infty} \gamma_\setu^{-1} \,
        \|G(u_\setu^{h_\setu})\|_{H_{\alpha,0,\rho,|\setu|}(\R^{|\setu|})}
        \\
        &\lesssim
        \left(
        \sup_{|\setu| < \infty} 
        \gamma_\setu^{-1} \,
        \bsb_\setu \, 2^{|\setu|/2} 
        \right)
        \,
        C_{\kappa, \alpha}' \,
        \|G\|_{V^*} \,
        \|f\|_{V^*}
        .
\end{align*}
By choosing $\gamma_j = \sqrt{2} \, b_j$, and with $\gamma_\setu = \prod_{j \in \setu} \gamma_j$, we have
\begin{align*}
  \sup_{|\setu| < \infty} 
    \gamma_\setu^{-1} \,
    \bsb_\setu \, 2^{|\setu|/2}
  &=
  1
  .
\end{align*}
Combining all three errors we obtain the claimed bound for the total error.
\end{proof}

\begin{remark}\label{rem:FF-in-HH}
In the previous proof we made use of a supremum-norm over all subspaces of the infinite-variate space, namely $\sup_{|\setu| < \infty} \gamma_\setu^{-1} \, \|F_\setu\|_{H_{\alpha,0,\rho,|\setu|}(\R^{|\setu|})}$.
Since we know from \RefLem{lem:norm-uu-Guu} that $\|F_\setu\|_{H_{\alpha,0,\rho,|\setu|}(\R^{|\setu|})} \lesssim \bsb_\setu \, 2^{|\setu|/2}$, where $\bsb_\setu = \prod_{j \in \setu} b_j$, we have chosen product weights $\gamma_\setu = \prod_{j \in \setu} \gamma_j$ with $\gamma_j = \sqrt2 \, b_j$ such that this supremum-norm is finite.
This same choice of weights $\gamma_\setu$ also shows that our infinite-variate function $\FF = \sum_{|\setu| < \infty} F_\setu$ has finite norm in the infinite-variate reproducing kernel Hilbert space $\HH_{\alpha,\rho,\bsgamma}(\R^\N)$ which we introduced in \RefRem{rem:infinite-variate-RKHS}, since
\begin{align*}
  \|\FF\|_{\HH_{\alpha,\rho,\bsgamma}(\R^\N)}^2
  &=
  \sum_{|\setu| < \infty} \gamma_\setu^{-1} \, \|F_\setu\|_{H_{\alpha,0,\rho,|\setu|}(\R^{|\setu|})}^2
  \lesssim
  \sum_{|\setu| < \infty} \prod_{j \in \setu} \frac{2 \, b_j^2}{\sqrt2 \, b_j}
  =
  \prod_{j \ge 1} (1 + \sqrt2 \, b_j)
  ,
\end{align*}
which is finite when $\sum_{j \ge 1} b_j < \infty$ (using the technique as in the proof of \RefProp{prop:active-set-truncation-error}).
The summability of the $b_j$ is implied by our assumption in \RefProp{prop:bound-total-error} which demands $\{b_j\}_{j\ge1} \in \ell^{p^*}(\N)$ for some $p^* \in (0,1]$.
\end{remark}

\subsection{Selection of the active set}\label{sec:active-set}

Based on the expression of the truncation error in \RefProp{prop:bound-total-error} we can choose the active set to reach a truncation error upper bounded by~$\epsilon/2$ up to multiplicative constants.

\begin{proposition}\label{prop:active-set-truncation-error}
	Under the conditions of \RefProp{prop:bound-total-error} with $p^* \in (0,1)$, for which $\{b_j\}_{j\ge1} \in \ell^{p^*}(\N)$, let the MDFEM active set be chosen by
\begin{align}\label{eq:active-set}
  \setU(\epsilon)
  =
  \setU(\epsilon, p^*)
  &:=
  \left\{
    \setu
    :
    \gamma_\setu \, M_\setu 
    >
    \left(
      \frac{\epsilon/2}{\sum_{|\setv| < \infty} (\gamma_\setv \, M_\setv)^{p^*}}
    \right)^{1/(1-p^*)}
  \right\}
  ,
\end{align} 
    with $\gamma_j = \sqrt{2} \, b_j$ for all $j$ as in~\eqref{eq:gamma-j-choice} and with $\gamma_\setu = \prod_{j\in\setu} \gamma_j$.
    Then
	\begin{align*}
	  \sum_{|\setu| < \infty} (\gamma_\setu \, M_\setu)^{p^*}
	  &<
	  \infty
	  ,
	\end{align*}
	and the MDFEM truncation error is bounded as
	\begin{align*}
        \left|
          \II(G(u)) - \sum_{\setu \in \setU(\epsilon)} I_\setu(G(u_\setu))
        \right|
        &\lesssim
        \frac{\epsilon}{2}
        .
	\end{align*}
\end{proposition}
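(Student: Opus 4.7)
The plan is to unpack \RefProp{prop:bound-total-error}, in which the truncation error is shown to be bounded (up to constants) by $\sum_{\setu \notin \setU(\epsilon)} \gamma_\setu M_\setu$, and verify that the specific choice of active set~\eqref{eq:active-set} makes this tail sum no larger than~$\epsilon/2$. There are two things to establish: (i)~the normalising constant $S := \sum_{|\setu|<\infty} (\gamma_\setu M_\setu)^{p^*}$ appearing in~\eqref{eq:active-set} is finite, so that the threshold in the definition of $\setU(\epsilon)$ is strictly positive; and (ii)~the tail $\sum_{\setu \notin \setU(\epsilon)} \gamma_\setu M_\setu \lesssim \epsilon/2$.

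For (i), I would use that $M_\setu = M^{|\setu|}$ by \RefProp{prop:integrability} and the product structure $\gamma_\setu = \prod_{j \in \setu} \gamma_j$ with $\gamma_j = \sqrt{2}\, b_j$ to rewrite
\begin{align*}
  \sum_{|\setu|<\infty} (\gamma_\setu M_\setu)^{p^*}
  &=
  \sum_{|\setu|<\infty} \prod_{j\in\setu} (\sqrt{2}\,M\,b_j)^{p^*}
  =
  \prod_{j \ge 1} \bigl( 1 + (\sqrt{2}\,M\,b_j)^{p^*} \bigr)
  .
\end{align*}
Taking logarithms and using $\ln(1+x) \le x$ for $x \ge 0$, the infinite product is finite precisely because $\{b_j\}_{j\ge1} \in \ell^{p^*}(\N)$.

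For (ii), the standard MDM trick is to split the exponent as $1 = p^* + (1-p^*)$, writing $\gamma_\setu M_\setu = (\gamma_\setu M_\setu)^{p^*}\,(\gamma_\setu M_\setu)^{1-p^*}$. For every $\setu \notin \setU(\epsilon)$ the defining inequality of~\eqref{eq:active-set} gives $\gamma_\setu M_\setu \le (\epsilon/(2S))^{1/(1-p^*)}$, and hence $(\gamma_\setu M_\setu)^{1-p^*} \le \epsilon/(2S)$. Therefore
\begin{align*}
  \sum_{\setu \notin \setU(\epsilon)} \gamma_\setu M_\setu
  &\le
  \frac{\epsilon}{2 S} \sum_{\setu \notin \setU(\epsilon)} (\gamma_\setu M_\setu)^{p^*}
  \le
  \frac{\epsilon}{2 S}\, S
  =
  \frac{\epsilon}{2}
  .
\end{align*}
Combining this with the truncation-error bound of \RefProp{prop:bound-total-error} gives the claimed estimate.

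I do not anticipate a genuine obstacle: the only subtlety is making sure that the conditions of \RefProp{prop:bound-total-error} (in particular $\{b_j\} \in \ell^{p^*}$ and $\kappa < \ln(2)/\alpha$) are in force, so that $\|G(u_\setu)\|_{H_{\alpha,0,\rho,|\setu|}(\R^{|\setu|})} \lesssim \bsb_\setu\,2^{|\setu|/2}$ as in \RefLem{lem:norm-uu-Guu}, thereby justifying the replacement of the Hilbert-space norm by the factor $\gamma_\setu$ that appears in~\eqref{eq:active-set}. Once that is in place, (i) and (ii) above are a clean two-line calculation.
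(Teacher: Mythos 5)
Your proposal is correct and follows essentially the same route as the paper's own proof: the finiteness of $\sum_{|\setu|<\infty}(\gamma_\setu M_\setu)^{p^*}$ via the infinite product and $\ln(1+x)\le x$, and the tail bound via the split $\gamma_\setu M_\setu = (\gamma_\setu M_\setu)^{1-p^*}(\gamma_\setu M_\setu)^{p^*}$ together with the complement of the active-set condition. No gaps.
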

\begin{proof}
For the first claim we have the implications, with $\gamma_\setu = \prod_{j\in\setu} \gamma_j$ and $M_\setu = M^{|\setu|}$,
\begin{align*}
  &\sum_{|\setu| < \infty} (\gamma_\setu \, M_\setu)^{p^*}
  =
  \lim_{s \to \infty} \sum_{\setu \subseteq \{1:s\}} (\gamma_\setu \, M_\setu)^{p^*}
  =
  \lim_{s \to \infty} \prod_{j=1}^s \left( 1 + (\gamma_j \, M)^{p^*} \right)
  <
  \infty
  \\
  &\Leftrightarrow\quad
  \ln\Big( \prod_{j\ge1} \left( 1 + (\gamma_j \, M)^{p^*} \right) \Big)
  =
  \sum_{j \ge 1} \ln\left( 1 + (\gamma_j \, M)^{p^*} \right) \Big)
  <
  \infty
  \\
  &\Leftarrow\quad
  \sum_{j \ge 1} (\gamma_j \, M)^{p^*}
  <
  \infty
  \\
  &\Leftrightarrow\quad
  (\sqrt2 \, M)^{p^*} 
  \sum_{j \ge 1} b_j^{p^*}
  <
  \infty
  ,
\end{align*}
where we used $\ln(1+x) \le x$ for $x > -1$ and the last line is true since $\{ b_j \}_{j \ge 1} \in \ell^{p^*}(\N)$.
For the second claim we have from \RefProp{prop:bound-total-error}
\begin{multline*}
    \left|
      \II(G(u)) - \sum_{\setu \in \setU(\epsilon)} I_\setu(G(u_\setu)) 
    \right|
    \lesssim
    \sum_{\setu \notin \setU(\epsilon,p^*)} \gamma_\setu \, M_\setu
    =
    \sum_{\setu \notin \setU(\epsilon,p^*)} 
     \left(\gamma_\setu \, M_\setu\right)^{(1-p^*)}  \left(\gamma_\setu \, M_\setu\right)^{p^*} 
    \\
    \le
    \sum_{\setu \notin \setU(\epsilon,p^*)}  
     \frac{\epsilon/2}{\sum_{|\setv| < \infty} (\gamma_\setv \, M_\setv)^{p^*}} \left(\gamma_\setu \, M_\setu\right)^{p^*}
     \le
     \frac{\epsilon}{2}
     .\qedhere
\end{multline*}
\end{proof}

The following result states that both the cardinality of the active set, as well as the cardinalities of each of the individual sets in the active set, increase very slowly with decreasing~$\epsilon$.
The (upper bound of the) cardinality of the active set also gets smaller for decreasing~$p^*$.

\begin{proposition}\label{prop:cardinality-active-set}
	Given $\gamma_\setu = \prod_{j\in\setu} \gamma_j$ with $\{\gamma_j\} \in \ell^{p^*}(\N)$ for some $p^*\in (0,1)$ and $\setU(\epsilon) = \setU(\epsilon,p^*)$ chosen as in~\eqref{eq:active-set}, then
	for any $\epsilon > 0$ it holds that
	\begin{align*}
	  |\setU(\epsilon, p^*)|
	  &<
	  \left(\frac{2}{\epsilon} \right)^{p^*/(1-p^*)}
	  \left(\sum_{|\setu| < \infty} (\gamma_\setu \, M_\setu)^{p^*} \right)^{1/(1-p^*)}
	  \lesssim 
	  \epsilon^{-p^*/(1-p^*)}
	  ,
	\end{align*} 
	and
	\begin{align}\label{eq:U-max-size}
	  d(\epsilon,p^*)
	  =
	  d(\setU(\epsilon,p^*))
	  :=
	  \max_{\setu \in \setU(\epsilon,p^*)} |\setu|
	  =
	  O\!\left( \frac{\ln(\epsilon^{-1})}{\ln(\ln(\epsilon^{-1}))} \right) 
	  =
	  o(\ln(\epsilon^{-1}))
	  \qquad
	  \text{for } \epsilon \to 0
	  .
	\end{align}
\end{proposition}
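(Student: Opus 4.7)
The plan is to prove both bounds by directly exploiting the threshold definition~\eqref{eq:active-set} of the active set. Write $S := \sum_{|\setv|<\infty} (\gamma_\setv \, M_\setv)^{p^*}$ and $\beta := (\epsilon/2)/S$, so that every $\setu \in \setU(\epsilon,p^*)$ satisfies $\gamma_\setu \, M_\setu > \beta^{1/(1-p^*)}$, equivalently $(\gamma_\setu \, M_\setu)^{p^*} > \beta^{p^*/(1-p^*)}$. For the cardinality bound, I would sum this inequality over $\setu \in \setU(\epsilon,p^*)$ and bound the right-hand side from above by $S$, obtaining $|\setU(\epsilon,p^*)| \, \beta^{p^*/(1-p^*)} < S$. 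Substituting the value of $\beta$ and simplifying recovers $|\setU(\epsilon,p^*)| < (2/\epsilon)^{p^*/(1-p^*)} \, S^{1/(1-p^*)}$, and since $S < \infty$ by \RefProp{prop:active-set-truncation-error}, the asymptotic form $\lesssim \epsilon^{-p^*/(1-p^*)}$ is immediate.

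For the bound on $d(\epsilon,p^*)$, I would pass to the non-increasing rearrangement $(\gamma^*_j)_{j\ge1}$ of the sequence $(\gamma_j)_{j\ge1}$. For any $\setu$ of cardinality $k$, clearly $\gamma_\setu = \prod_{j\in\setu}\gamma_j \le \prod_{j=1}^k \gamma^*_j$. The hypothesis $\{\gamma_j\}\in\ell^{p^*}(\N)$ then yields the order-statistics estimate $j \, (\gamma^*_j)^{p^*} \le \sum_{i=1}^j (\gamma^*_i)^{p^*} \le S_{p^*}$, where $S_{p^*} := \sum_{i\ge1}\gamma_i^{p^*} < \infty$, and hence $\gamma^*_j \le (S_{p^*}/j)^{1/p^*}$. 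Multiplying these inequalities together gives the key estimate $\gamma_\setu \, M^k \le (M \, S_{p^*}^{1/p^*})^k / (k!)^{1/p^*}$, valid uniformly over subsets of size $k$.

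Combining this with the defining condition $\gamma_\setu \, M^k > \beta^{1/(1-p^*)}$ and taking logarithms, any $\setu \in \setU(\epsilon,p^*)$ of size $k$ must satisfy $\ln(k!) < p^* \, k \, \ln(M \, S_{p^*}^{1/p^*}) - (p^*/(1-p^*)) \ln \beta$. Since $\ln \beta = -\ln(\epsilon^{-1}) + O(1)$ and Stirling's bound gives $\ln(k!) \ge k \ln k - k$, the inequality forces $k \ln k \le C_1 \, k + C_2 \ln(\epsilon^{-1})$ for explicit constants $C_1, C_2$ depending only on $p^*$, $M$ and $S_{p^*}$. For $k$ large enough to absorb the $C_1 \, k$ term this reduces to $k \ln k \lesssim \ln(\epsilon^{-1})$, and solving this implicit relation yields $k = O(\ln(\epsilon^{-1})/\ln\ln(\epsilon^{-1}))$, which is $o(\ln(\epsilon^{-1}))$ as $\epsilon \to 0$. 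The only step needing a little care is the rearrangement-plus-Stirling argument in this last paragraph; the rest is elementary algebra on the active-set threshold.
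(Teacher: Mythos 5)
Your proposal is correct. The paper gives no argument of its own here, deferring entirely to \cite[Propositions~2 and~3]{DN18}, and your two steps --- raising the threshold inequality to the power $p^*$ and summing it to get $|\setU(\epsilon,p^*)|\,\beta^{p^*/(1-p^*)} < S$, then using the non-increasing rearrangement together with $j\,(\gamma_j^*)^{p^*} \le S_{p^*}$ to force $\gamma_\setu M^k \le c^k/(k!)^{1/p^*}$ and inverting $k\ln k \lesssim \ln(\epsilon^{-1})$ via Stirling --- are precisely the mechanism of the cited propositions, so this is essentially the paper's approach written out in full.
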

\begin{proof}
  See~\cite{PW11,NN21}.
\end{proof}

\subsection{Selection of the cubature rules and FEMs}\label{sec:Q+FE}

By the combined error of the FE approximations and the cubature errors from \RefProp{prop:bound-total-error} we can now choose the mesh diameters $h_\setu$ for the FE approximations and the number of cubature points $n_\setu$ for the cubature formulas to balance the combined error and obtain an upper bound of $\epsilon/2$ up to multiplicative constants using the method of Lagrange multipliers to minimize the associated cost.
The maximum appearing in the bound of the next result will be dealt with later in \RefThm{thm:main-theorem}.

\begin{proposition}\label{prop:FE+Q-error}
	Assume the conditions of \RefProp{prop:bound-total-error} hold with $0 < p^* \le (2 + \dd/\tau)^{-1} < \frac12$ and the cost of solving the linear systems for the FEMs are $O(h_\setu^{-\dd})$ as in~\eqref{eq:cost-of-solve} and the application of $G$ to the FE approximations converge like $O(h_\setu^\tau)$ as in~\eqref{eq:Lprho-FE-error-G}.
	Take
	\begin{align*}
	  \lambda
	  &=
	  \frac{(1-p^*)}{p^* \, (1 + \dd/\tau)}
	  \ge
	  1
	  &\text{and}&&
	  \alpha
	  &=
	  \lfloor\lambda\rfloor + 1
	  \ge
	  2
	  ,
	\end{align*}
	and take the FE mesh diameters $h_\setu$ and the number of cubature points $n_\setu$ as the solution to an optimization problem \textup{(}specified as~\eqref{eq:minimize} in the proof of this statement, with solutions~\eqref{eq:hu} and~\eqref{eq:ku} and with $n_\setu = 2^{\log_2(\lfloor k_\setu \rfloor)} \in \N_0$\textup{)}.
	Then the combined error from the FE approximations and the cubature approximations of the MDFEM algorithm, using transformed polynomial lattice rules with interlacing factor~$\alpha$, is bounded as
    \begin{align*}
        \sum_{\setu \in \setU(\epsilon)} 
        \left|
          I_\setu(G(u_\setu)) - Q_{\setu,n_\setu}(G(u^{h_\setu}_\setu))
        \right|
        &\lesssim
        \max\left\{1, \max_{\setu \in \setU(\epsilon)}
        \left(\frac{\ln(n_\setu)}{|\setu|}\right)^{\alpha_1 |\setu|}\right\}
        \;
        \frac{\epsilon}{2}
        ,
    \end{align*}
    where $\alpha_1 =  \alpha/2 + 1/4$, and the computational cost is bounded as
    \begin{align*}
      \cost(Q_\epsilon)
      &\lesssim
      \epsilon^{-1/\lambda - \dd/\tau}
      =
      \epsilon^{-a_\MDFEM}
      &
      \text{with} \quad
      a_\MDFEM
      &:=
      \frac{1}{\lambda} + \frac{\dd}{\tau}
      =
      \frac{p^* + \dd/\tau}{1 - p^*}
      .
    \end{align*}
\end{proposition}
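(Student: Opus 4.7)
The plan is to choose $n_\setu$ and $h_\setu$ by a two-stage constrained optimization built on top of the combined-error bound of \RefProp{prop:bound-total-error}. For each $\setu \in \setU(\epsilon)$, write the per-set cubature coefficient $A_\setu := \gamma_\setu \, C_{\alpha,\lambda,|\setu|} \, |\setu|^{\alpha_1|\setu|}$, the FE coefficient $B_\setu := 2^{|\setu|}$, and recall $\pounds_\setu = 2^{|\setu|} |\setu|$. The plan is to minimize $\sum_{\setu \in \setU(\epsilon)} n_\setu \, h_\setu^{-\dd} \, \pounds_\setu$ subject to $\sum_{\setu \in \setU(\epsilon)} (A_\setu n_\setu^{-\lambda} + B_\setu h_\setu^\tau) \le \epsilon/2$; the logarithmic pre-multiplier $\max\{1, \max_\setu (\ln n_\setu/|\setu|)^{\alpha_1|\setu|}\}$ is kept outside, since it already appears in the claim.

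In stage one I balance, for each $\setu$ separately, cubature against FE: for a fixed budget $e_\setu > 0$, minimizing $n_\setu h_\setu^{-\dd}\pounds_\setu$ over $A_\setu n_\setu^{-\lambda} + B_\setu h_\setu^\tau = e_\setu$ (a one-variable calculus problem) gives the split $A_\setu n_\setu^{-\lambda} : B_\setu h_\setu^\tau = \tau : \lambda\dd$, with minimum per-set cost proportional to $s_\setu \, e_\setu^{-q}$, where
\begin{equation*}
  s_\setu := \pounds_\setu \, A_\setu^{1/\lambda} \, B_\setu^{\dd/\tau},
  \qquad
  q := \frac{1}{\lambda} + \frac{\dd}{\tau} = a_\MDFEM.
\end{equation*}
In stage two I distribute the total budget via a Lagrange multiplier: minimizing $\sum_\setu s_\setu e_\setu^{-q}$ under $\sum_\setu e_\setu = \epsilon/2$ gives $e_\setu \propto s_\setu^{1/(q+1)}$, and therefore
\begin{equation*}
  \cost(Q_\epsilon) \lesssim \Bigl(\sum_{\setu \in \setU(\epsilon)} s_\setu^{1/(q+1)}\Bigr)^{q+1} \, (\epsilon/2)^{-q}.
\end{equation*}
The prescribed $k_\setu$ and $h_\setu$ of the statement are then read off these optimal formulas, and rounding $n_\setu$ to a power of two (as required by the construction underlying \RefThm{thm:error-bound-IPLR}) loses only a multiplicative constant in both the cost and the cubature error.

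The main obstacle is showing that the prefactor $\sum_{\setu \in \setU(\epsilon)} s_\setu^{1/(q+1)}$ is bounded independently of $\epsilon$. The crucial algebraic identity, forced by the choice $\lambda = (1-p^*)/(p^*(1+\dd/\tau))$, is $\lambda(q+1) = 1/p^*$, so that the weight contribution collapses to
\begin{equation*}
  \gamma_\setu^{1/(\lambda(q+1))} = \gamma_\setu^{p^*} = \prod_{j \in \setu}(\sqrt{2}\,b_j)^{p^*},
\end{equation*}
and $\sum_{|\setu|<\infty} \gamma_\setu^{p^*} = \prod_{j\ge 1}(1+(\sqrt{2}\,b_j)^{p^*}) < \infty$ by $\{b_j\}_{j\ge1} \in \ell^{p^*}(\N)$, using the same product-expansion trick as in the proof of \RefProp{prop:active-set-truncation-error}. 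The other $|\setu|$-dependent factors from $s_\setu^{1/(q+1)}$, namely $(2^{|\setu|}|\setu|)^{\lambda p^*}$, $2^{|\setu|(1-p^*)\dd/(\tau+\dd)}$, $C_{\alpha,\lambda,|\setu|}^{p^*}$, and the awkward $|\setu|^{\alpha_1 p^*|\setu|}$, are controlled by restricting to $\setu \in \setU(\epsilon)$ and invoking $|\setu| \le d(\epsilon,p^*) = O(\ln(\epsilon^{-1})/\ln\ln(\epsilon^{-1}))$ from \RefProp{prop:cardinality-active-set}; the dominant $|\setu|^{\alpha_1 p^* |\setu|}$ is then bounded by $\exp(O(\ln\epsilon^{-1}))$, which together with the analogous treatment of $C_{\alpha,\lambda,|\setu|}$ is absorbed into the $\lesssim$-constant. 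Combining the finiteness of the prefactor sum with the $\epsilon^{-q}$ factor gives the claimed $\cost(Q_\epsilon) \lesssim \epsilon^{-a_\MDFEM}$; the combined error bound in the statement is automatic from $\sum_\setu e_\setu = \epsilon/2$ together with the outer $\max$-factor carried through from \RefProp{prop:bound-total-error}.
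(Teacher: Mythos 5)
Your optimization scheme is, up to reorganization, the same as the paper's: the two-stage balancing (per-set split $A_\setu n_\setu^{-\lambda} : B_\setu h_\setu^\tau = \tau : \lambda\dd$, then a Lagrange multiplier across sets) reproduces exactly the joint Lagrangian solution \eqref{eq:hu}--\eqref{eq:ku}, your prefactor $\sum_\setu s_\setu^{1/(q+1)}$ is (up to a constant per term) the quantity $K_\epsilon$ in \eqref{eq:L-multiplier}, and the identity $\lambda(q+1) = 1/p^*$ correctly isolates $\gamma_\setu^{p^*}$ as the summable weight contribution; this part is sound.

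The genuine gap is in how you dispose of the remaining $|\setu|$-dependent factors, above all $|\setu|^{\alpha_1 p^* |\setu|}$. Bounding $|\setu|$ by $d(\epsilon,p^*) = O(\ln(\epsilon^{-1})/\ln\ln(\epsilon^{-1}))$ gives $|\setu|^{\alpha_1 p^*|\setu|} \le \exp(O(\ln \epsilon^{-1})) = \epsilon^{-O(1)}$, which is a \emph{negative power of $\epsilon$}, not a constant, and it cannot be absorbed into the $\lesssim$: since the prefactor enters the cost as $\bigl(\sum_\setu s_\setu^{1/(q+1)}\bigr)^{q+1}$, any such growth strictly worsens the exponent and the claimed bound $\cost(Q_\epsilon) \lesssim \epsilon^{-1/\lambda - \dd/\tau}$ fails. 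The correct mechanism, used in the paper via \eqref{eq:uniformly-bounded} and \cite[Lemma~1]{DN18}, is to bound $K_\epsilon$ by the sum over \emph{all} finite $\setu$ and exploit the product-weight structure: grouping by cardinality $\ell = |\setu|$, the sum of $\prod_{j\in\setu}\beta_j$ over all $\setu$ with $|\setu|=\ell$ is at most $(\sum_j \beta_j)^\ell/\ell!$, and the factorial beats $\ell^{c\ell}$ (and any purely exponential factors such as $C_{\alpha,\lambda,|\setu|}^{p^*}$, $2^{|\setu|(\cdot)}$, $\pounds_\setu^{(\cdot)}$) precisely when the exponent $c = \alpha_1 p^* = \tau\alpha_1/(\tau+\lambda(\tau+\dd))$ satisfies $c<1$. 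This is condition \eqref{eq:condition:alpha1}, which must be verified for the chosen interlacing factor $\alpha = \lfloor\lambda\rfloor + 1$ (it holds since $\alpha_1 = \lfloor\lambda\rfloor/2 + 3/4 < 1 + \lambda(1+\dd/\tau)$), together with condition \eqref{eq:condition:lambda} guaranteeing the weights appear to a power at least $p^*$. Your proof needs this summability-versus-factorial step in place of the max-cardinality bound; the rest (including the $2^\lambda$ inflation needed so that rounding $n_\setu$ down to a power of two preserves the error constraint) is essentially bookkeeping.
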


\begin{proof}
From \RefProp{prop:bound-total-error} we obtain
\begin{align}
    \notag
    &
    \sum_{\setu \in \setU(\epsilon)}
    \left| I_\setu(G(u_\setu)) - Q_{\setu,n_\setu}(G(u^{h_\setu}_\setu)) \right|
    \\
    \label{eq:Q-error-1}
    &\qquad\lesssim
    \max
    \left\{
      1, \max_{\setu \in \setU(\epsilon)} \left(\frac{\ln(n_\setu)}{|\setu|}\right)^{\alpha_1 |\setu|}
    \right\}
    \sum_{\setu \in \setU(\epsilon)} 
    \left( 
      \frac{\gamma_\setu \, 2^\lambda \, C_{\alpha,\lambda,|\setu|} \, |\setu|^{\alpha_1 |\setu|}}{(\max\{1, 2 \, n_\setu\})^\lambda} 
      +
      2^{|\setu|} \,
      h_\setu^\tau 
    \right)
    .
\end{align}
Note that compared to \RefProp{prop:bound-total-error} we have (possibly) increased the upper bound by multiplying the numerator by $2^\lambda$ while only multiplying the denominator by $2^\lambda$ for each $\setu$ with $n_\setu \in \N_0$ when $n_\setu \ge 1$.
This is such that the optimization problem we formulate next will give us an upper bound for this error.
We will deal with the max term later in \RefThm{thm:main-theorem} and now put an upper bound of $\epsilon/2$ on the sum over~$\setu$.
We are looking for positive real numbers $k_\setu$ and $h_\setu$ which are the solutions of the following minimization problem: 
\begin{align}\label{eq:minimize}
  \begin{array}{l}
    \displaystyle
    \text{minimize }
    \sum_{\setu \in \setU(\epsilon)} k_\setu \, h_\setu^{-\dd} \pounds_\setu
    \\
    \displaystyle
    \text{subject to }
    \sum_{\setu \in \setU(\epsilon)}
    \left(
      \frac{\gamma_\setu \, 2^\lambda \, C_{\alpha,\lambda,|\setu|} \, |\setu|^{\alpha_1 |\setu|}}{k_\setu^\lambda} 
      +
      2^{|\setu|} \, h_\setu^\tau  
    \right)
    =
    \frac{\epsilon}{2}
    .
  \end{array}
\end{align}
Note that since we set $n_\setu = 2^{\log_2(\lfloor k_\setu \rfloor)} \le k_\setu$, the objective function is an upper bound on the cost~\eqref{eq:cost-MDFEM} while the constraint is an upper bound on the sum over $\setu$ in the error bound~\eqref{eq:Q-error-1} since $\max\{1, 2 \, n_\setu\} = \max\{1, 2^{1+\log_2(\lfloor k_\setu \rfloor)} \} \ge k_\setu$.
See also, e.g., \cite[Section~4.3]{KNPSW17} for a similar technique.

Define $a_\setu := \gamma_\setu \, 2^\lambda \, C_{\alpha,\lambda,|\setu|} \, |\setu|^{\alpha_1 |\setu|}$.
The Lagrangian is then given by
\begin{align*}
    \Lambda(\xi)
    &=
    \sum_{\setu \in \setU(\epsilon)} k_\setu \, h_\setu^{-\dd} \, \pounds_\setu
    +
    \xi
    \left(
    \sum_{\setu \in \setU(\epsilon)}
    \left(
      a_\setu \, k_\setu^{-\lambda}
      +
      2^{|\setu|} \, h_\setu^\tau  
    \right)
    -  \frac{\epsilon}{2}
    \right)
    ,
\end{align*}
with $\xi$ the Lagrange multiplier.
For each $\setu \in \setU(\epsilon)$ we obtain the equations
\begin{align*}
  \begin{cases}
    \displaystyle \frac{\partial \Lambda}{\partial k_\setu} 
    =
    0
    =
    h_\setu^{-\dd} \, \pounds_\setu
    -
    \xi \, \lambda \, a_\setu \, k_\setu^{-\lambda -1}
    ,
    \\[4mm]
    \displaystyle
    \frac{\partial \Lambda}{\partial h_\setu} 
    =
    0
    =
    -\dd \, k_\setu \, h_\setu^{-\dd-1} \, \pounds_\setu 
    + 
    \xi \, \tau \, 2^{|\setu|} \, h_\setu^{\tau-1} 
    .
  \end{cases}  
\end{align*}
From the second equation we obtain (after multiplying with $h_\setu$)
\begin{align}\label{eq:L1}
  h_\setu^{-\dd} \, \pounds_\setu
  &=
  \xi
  \frac{\tau \, 2^{|\setu|} \, h_\setu^\tau}{\dd \, k_\setu}
\end{align}
which we insert into the first equation to obtain
\begin{align}\label{eq:hu}
  2^{|\setu|} \, h_\setu^\tau
  &=
  \frac{\lambda\,\dd}{\tau} \, a_\setu \, k_\setu^{-\lambda}
  &\Rightarrow&&
  h_\setu
  &=
  \left( \frac{\lambda\,\dd}{\tau} \, \frac{a_\setu}{2^{|\setu|}} \right)^{1/\tau} k_\setu^{-\lambda/\tau}
\end{align}
of which we substitute the first form into the constraint to obtain
\begin{align}\label{eq:L3}
  \sum_{\setu \in \setU(\epsilon)} \left( a_\setu \, k_\setu^{-\lambda} + 2^{|\setu|} \, h_\setu^\tau \right)
  &=
  (1 + \lambda\,\dd/\tau ) \, \sum_{\setu \in \setU(\epsilon)} a_\setu \, k_\setu^{-\lambda}
  =
  \frac{\epsilon}{2}
  .
\end{align}
From~\eqref{eq:L1} and~\eqref{eq:hu} we also find
\begin{align}
  \notag
  k_\setu
  &=
  \xi \,
  \frac{\tau}{\dd} \,
  \frac{2^{|\setu|} \, h_\setu^{\tau+\dd}}{\pounds_\setu}
  =
  \xi \,
  \frac{\tau}{\dd} \,
  \frac{2^{|\setu|}}{\pounds_\setu}
  \,
  \left( \frac{\dd \, \lambda}{\tau \, 2^{|\setu|}} \, a_\setu \right)^{(\tau+\dd)/\tau} k_\setu^{-\lambda(\tau+\dd)/\tau}
  \\
  \notag
  \Rightarrow
  k_\setu
  &=
  \left(
  \xi \,
  \left(
    \frac{\dd}{\tau \, 2^{|\setu|}}
  \right)^{\dd/\tau}
  \,
  \frac{\left( \lambda \, a_\setu \right)^{(\tau+\dd)/\tau}}{\pounds_\setu}
  \right)^{\tau/(\tau+\lambda(\tau+\dd))}
  \\
  \label{eq:ku}
  &=
  B
  \,
  \xi^{\tau/(\tau+\lambda(\tau+\dd))}
  \left(\frac{a_\setu^{\tau+\dd}}{2^{|\setu| \dd} \, \pounds_\setu^\tau}\right)^{1/(\tau+\lambda(\tau+\dd))}
  ,
\end{align}
where we have set
\begin{align*}
  B
  =
  B(\dd,\lambda,\tau)
  &:=
  \left(\frac{\dd^{\dd} \, \lambda^{\tau+\dd}}{\tau^{\dd}}\right)^{1/(\tau+\lambda(\tau+\dd))}
  .
\end{align*}
Inserting this expression for $k_\setu$ into~\eqref{eq:L3} we obtain the following expression for the Lagrange multiplier
\begin{align}
  \notag
  \xi^{\tau/(\tau+\lambda(\tau+\dd))}
  &=
  \left(
  \frac{2}{\epsilon} \,
  (1 + \lambda\,\dd/\tau) \, B^{-\lambda}
  \sum_{\setu \in \setU(\epsilon)} a_\setu \,
  \left(\frac{a_\setu^{\tau+\dd}}{2^{|\setu| \dd} \, \pounds_\setu^\tau}\right)^{-\lambda/(\tau+\lambda(\tau+\dd))}
  \right)^{1/\lambda}
  \\
  \label{eq:L-multiplier}
  &=
  \left(
  \frac{2}{\epsilon} \,
  (1 + \lambda\,\dd/\tau) \, B^{-\lambda}
  \underbrace{
  \sum_{\setu \in \setU(\epsilon)}
  \left(a_\setu^\tau \, 2^{\lambda |\setu| \dd} \, \pounds_\setu^{\lambda \tau}\right)^{1/(\tau+\lambda(\tau+\dd))}
  }_{=: K_\epsilon}
  \right)^{1/\lambda}
  .
\end{align}
We require the sum $K_\epsilon$ in~\eqref{eq:L-multiplier} to be uniformly bounded while $\epsilon \to 0$, that is, we require
\begin{align}\label{eq:uniformly-bounded}
    K_\epsilon
    \le
    K
    :=
    \sum_{|\setu| < \infty}
    \left(
      \gamma_\setu^\tau \, 2^{\lambda \tau} \, C_{\alpha,\lambda,|\setu|}^\tau \, |\setu|^{\tau \alpha_1 |\setu|} \,
      2^{\lambda |\setu| \dd} \,
      \pounds_\setu^{\lambda\tau}
      \right)^{1/(\tau + \lambda(\tau+\dd))} 
    &<
    \infty
    .
\end{align}
Since $\gamma_\setu = 2^{|\setu|/2} \prod_{j\in \setu} b_j$ with $\{b_j\}_{j\ge 1} \in \ell^{p^*}(\N)$ and,
both $\pounds_\setu$ and $C_{\alpha,\lambda,|\setu|}$ are at most exponential in $|\setu|$, by applying~\cite[Lemma~1]{NN21} the sum~\eqref{eq:uniformly-bounded} is bounded if the following two conditions are satisfied:
\begin{align}\label{eq:condition:alpha1}
    \frac{\tau \, \alpha_1}{\tau + \lambda \, (\tau+\dd)} < 1
    ,
    \qquad 
    \text{or equivalently,}
    \qquad
    \alpha_1
    <
    1 + \lambda \, (1+\dd/\tau)
    ,
\end{align}
and 
\begin{align}\label{eq:condition:lambda}
    \frac{\tau}{\tau + \lambda \, (\tau+\dd)} \ge p^*
    ,
    \qquad 
    \text{or equivalently,}
    \qquad
    \lambda 
    \le 
    \frac{(1-p^*)}{p^* \, (1 + \dd/\tau)}
    .
\end{align}
Note that it is required in \RefThm{thm:HO-Q-R-truncated} that $\lambda \ge 1$, and together with~\eqref{eq:condition:lambda} this restricts us to the case when $p^*$ is sufficiently small, that is,
\begin{align}\label{eq:condition:pstar}
    1
    \le
    \frac{(1-p^*)}{p^* \, (1 + \dd/\tau)}
    ,
    \qquad
    \text{or equivalently,}
    \qquad
    p^* \le \frac1{2 + \dd /\tau}
    .
\end{align}
Making use of~\eqref{eq:L1} and the first expression in~\eqref{eq:hu}, and then~\eqref{eq:L3} and~\eqref{eq:L-multiplier}, the computational cost~\eqref{eq:cost-MDFEM} is bounded like
\begin{align}
    \notag
    \cost(Q_\epsilon)
    &\lesssim
    \sum_{\setu \in \setU(\epsilon)} n_\setu \, h_\setu^{-\dd} \,\pounds_\setu
    \le
    \sum_{\setu \in \setU(\epsilon)} k_\setu \, h_\setu^{-\dd} \,\pounds_\setu
    =
    \xi \, \lambda \sum_{\setu \in \setU(\epsilon)} a_\setu \, k_\setu^{-\lambda}
    =
    \frac{\lambda}{1 + \lambda\,\dd/\tau} \, \frac{\epsilon}{2} \, \xi
    \\
    \notag
    &=
    \frac{\lambda}{1 + \lambda\,\dd/\tau} \, \frac{\epsilon}{2}
  \left(
  \frac{2}{\epsilon} \,
  (1 + \lambda\,\dd/\tau) \, B^{-\lambda}
  \, K_\epsilon
  \right)^{1 + 1/\lambda + \dd/\tau}
    \\
    \notag
    &=
    \lambda \, \left((1 + \lambda\,\dd/\tau)^{-1} \, \frac{\epsilon}{2}\right)^{-1/\lambda-\dd/\tau} \,
  \left(
  B^{-\lambda}
  \, K_\epsilon
  \right)^{1 + 1/\lambda + \dd/\tau}
  .
\end{align}
Using the stated conditions we know that $K_\epsilon \le K < \infty$.
Hence we can write
\begin{align*}
  \cost(Q_\epsilon)
  &\lesssim
  \epsilon^{-1/\lambda-\dd/\tau}
  .
\end{align*}
To optimize the speed of convergence we have to pick $\lambda$ and $\tau$ as large as possible while satisfying~\eqref{eq:condition:lambda}.
Hence we choose
\begin{align*}
  \lambda
  &=
  \frac{(1-p^*)}{p^* \, (1 + \dd/\tau)}
  ,
\end{align*}
and we have $\lambda \ge 1$ due to~\eqref{eq:condition:pstar}.
By choosing the interlacing order of the interlaced polynomial lattice rule to be $\alpha = \lfloor\lambda\rfloor + 1$ we satisfy $\alpha \ge 2$ and $\lambda \in [1,\alpha)$, and we also satisfy~\eqref{eq:condition:alpha1}, since
\begin{align*}
  \alpha_1
  = 
  \frac{\alpha}{2} + \frac1{4}
  =
  \frac{\lfloor\lambda\rfloor}{2} + \frac{3}{4}
  <
  1 + \lambda \, (1+\dd/\tau)
  ,
\end{align*} 
for any $\lambda \ge 0$.
This finishes the proof.
\end{proof}

\begin{remark}\label{rem:randomized-FE+Q-error}
Since \RefProp{prop:FE+Q-error} is making use of interlaced polynomial lattice rules to achieve higher order convergence, i.e., $\lambda \ge 1$, we end up with the condition $0 < p^* \le (2 + \dd/\tau)^{-1}$ there.
Following the proof we see that if we make use of first order cubature rules with error bounds for
$\tfrac1{2} \le \lambda < 1$, we obtain the condition $p^* \le (3/2 + \dd/(2\tau))^{-1}$ by using~\eqref{eq:condition:lambda}.
In particular, we can use transformed \emph{randomly digitally shifted polynomial lattice rules} from \RefRem{rem:randomized-QMC} as cubature rules in the MDFEM algorithm.
A full description and analysis of using such randomized cubature rules in the context of the MDFEM algorithm for the uniform case is given in~\cite{NN21}.
We will write $\E^{\Delta(\epsilon)}[| \II(G(u)) - Q_\epsilon(G(u)) |^2]$ to denote the \emph{total mean square error} of the randomized MDFEM algorithm using randomly digitally shifted polynomial lattice rules.
The expected value is taken over a set $\Delta(\epsilon) := \{\bsDelta^{(\setu)}\}_{\setu \in \setU(\epsilon)}$ of independent random digital shifts, one for each subproblem~$\setu$, and where each such shift $\bsDelta^{(\setu)}$ is uniformly distributed over $[0,1)^{|\setu|}$.
\end{remark}

\subsection{Main result}

We are now able to analyze the complexity of the MDFEM algorithm.
The analysis is under the conditions of \RefPropRange{prop:bound-total-error}{prop:FE+Q-error} extended to the randomized setting using \RefRemTwo{rem:randomized-QMC}{rem:randomized-FE+Q-error}.

\begin{theorem}\label{thm:main-theorem}    
    Assume that for a given $\alpha \in \N$ there is a sequence $\{b_j\}_{j\ge1} \in \ell^{p^*}(\N)$ for some $p^* \in (0,1)$, with $0 < b_j \le 1$ and
  	\begin{align*}
	  \kappa
	  =
	  \left\|\sum_{j \ge 1} \frac{|\phi_j|}{b_j} \right\|_{L^\infty(D)}
	  <
	  \frac{\ln(2)}{\alpha}
	  .
	\end{align*}
    Let $G \in V^*$ be a bounded linear functional.
    Assume that $D$, $a$, $f$ and $G$ have sufficient spatial regularity such that the application of $G$ to the FE approximations of the truncated solutions converge like $O(h_\setu^\tau)$ as in~\eqref{eq:Lprho-FE-error-G} and assume that the cost of solving the linear systems for the FEMs are $O(h_\setu^{-\dd})$ as in~\eqref{eq:cost-of-solve}.
    
    For a given requested error $\epsilon > 0$ take the active set $\setU(\epsilon) = \setU(\epsilon,p^*)$ as in~\eqref{eq:active-set}. Set
    \begin{align*}
	  \lambda
	  &=
	  \frac{(1-p^*)}{p^* \, (1 + \dd/\tau)}
	  \ge
	  \frac12
	  &\text{and}
	  &&
	  \alpha
	  &=
	  \lfloor\lambda\rfloor + 1
	  \ge
	  1
	  .
	\end{align*}
    Further, take the FE mesh diameters $h_\setu$ and the number of cubature points $n_\setu$ as the solution to the optimization problem~\eqref{eq:minimize} \textup{(}see \RefProp{prop:FE+Q-error} and \RefRem{rem:randomized-FE+Q-error}, with solutions~\eqref{eq:hu} and~\eqref{eq:ku} and with $n_\setu = 2^{\log_2(\lfloor k_\setu \rfloor)}$\textup{)}, such that the convergence of the cubature rules over the unit cube can be bounded by $O(n_\setu^{-\lambda})$, see \RefThmTwo{thm:error-bound-IPLR}{thm:HO-Q-R-truncated} and \RefRem{rem:randomized-QMC}.
    
    Then, for the MDFEM algorithm $Q_\epsilon$, given in~\eqref{eq:MDFEM-algorithm}, the following hold.
    \begin{itemize}
    \item If $0 < p^* \le (2 + \dd/\tau)^{-1}$, i.e., $\lambda \ge 1$ and $\alpha \ge 2$, then, using transformed interlaced polynomial lattice rules with interlacing factor $\alpha$, we have
      \begin{align*}
        |\II(G(u)) - Q_\epsilon(G(u))|
        &\lesssim
        \epsilon^{1 - o(1)}
        .
      \end{align*}
    \item If $(2 + \dd/\tau)^{-1} < p^* \le (3/2 + \dd/(2\tau))^{-1}$, i.e., $\frac12 \le \lambda < 1$ and $\alpha = 1$, then, using transformed randomly digitally shifted polynomial lattice rules, we have
      \begin{align*}
        \sqrt{\E^{\Delta(\epsilon)} \left[|\II(G(u)) - Q_\epsilon(G(u))|^2\right]}
        &\lesssim
        \epsilon^{1 - o(1)}
        .
      \end{align*}	 	
	\end{itemize} 
    In both cases the computational cost is bounded as
    \begin{align}\label{eq:a-MDFEM}
      \cost(Q_\epsilon)
      &\lesssim
      \epsilon^{-1/\lambda - \dd/\tau}
      =
      \epsilon^{-a_\MDFEM}
      &
      \text{with} \quad
      a_\MDFEM
      &:=
      \frac{1}{\lambda} + \frac{\dd}{\tau}
      =
      \frac{p^* + \dd/\tau}{1 - p^*}
      .
    \end{align}	
\end{theorem}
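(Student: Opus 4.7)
The plan is to assemble the three preceding propositions and then handle the one remaining technical gap, namely the logarithmic max-factor appearing in the cubature error bound. First I would apply \RefProp{prop:bound-total-error} to split the total error into a truncation part and a combined (FE + cubature) part. The truncation part is $\lesssim \epsilon/2$ by \RefProp{prop:active-set-truncation-error} once the active set is chosen as in~\eqref{eq:active-set}, and this works for \emph{both} bullets of the theorem since it does not rely on $\lambda \ge 1$. The combined part is $\lesssim \epsilon/2$ \emph{modulo} the factor $M(\epsilon) := \max\bigl\{1, \max_{\setu \in \setU(\epsilon)} (\ln(n_\setu)/|\setu|)^{\alpha_1 |\setu|}\bigr\}$ by \RefProp{prop:FE+Q-error} (for $\lambda \ge 1$, first bullet) or by its randomized variant in \RefRem{rem:randomized-FE+Q-error} (for $\tfrac12 \le \lambda < 1$, second bullet, where we take square roots inside the bound as the cubature error there is a mean-square error). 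Adding the two halves gives a bound of the form $(1 + M(\epsilon))\epsilon$, which reduces the theorem to showing $M(\epsilon) = \epsilon^{-o(1)}$.

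The main obstacle is controlling $M(\epsilon)$. Here I would use \RefProp{prop:cardinality-active-set} to obtain the uniform bound $|\setu| \le d(\epsilon,p^*) = O(\ln(\epsilon^{-1})/\ln\ln(\epsilon^{-1}))$ on the sizes of sets in the active set, and separately I would bound $n_\setu$. From the explicit formula~\eqref{eq:ku}, $n_\setu \le k_\setu$ depends on $\xi$, which by~\eqref{eq:L-multiplier} is polynomial in $\epsilon^{-1}$ (with a sum factor $K$ that is finite under the stated conditions, cf.~\eqref{eq:uniformly-bounded}); the remaining $\setu$-dependent factor is at most exponential in $|\setu|$, and $|\setu| = o(\ln(\epsilon^{-1}))$, so $\ln(n_\setu) = O(\ln(\epsilon^{-1}))$ uniformly in $\setu \in \setU(\epsilon)$. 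Taking logarithms,
\begin{align*}
  \ln M(\epsilon)
  &\le
  \alpha_1 \, d(\epsilon,p^*) \, \ln\!\left( \frac{\ln(n_\setu)}{|\setu|} \right)
  \lesssim
  \frac{\ln(\epsilon^{-1})}{\ln\ln(\epsilon^{-1})} \cdot \ln\ln(\epsilon^{-1})
  ,
\end{align*}
which at first glance gives only $\epsilon^{-O(1)}$. To sharpen this to $\epsilon^{-o(1)}$, I would observe that, over $\setu \in \setU(\epsilon)$, the ratio $\ln(n_\setu)/|\setu|$ is bounded by $O(\ln\ln(\epsilon^{-1}))$ once one uses that $|\setu|$ can be as large as $d(\epsilon,p^*)$ and that the rule ``big $n_\setu$ goes with big $|\setu|$'' is encoded in~\eqref{eq:ku}; then $\ln M(\epsilon) \lesssim d(\epsilon,p^*) \cdot \ln\ln\ln(\epsilon^{-1}) = o(\ln(\epsilon^{-1}))$, i.e.\ $M(\epsilon) = \epsilon^{-o(1)}$, which is exactly what we need.

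Once the error claim is in place, the cost claim is essentially free: \RefProp{prop:FE+Q-error} already gives $\cost(Q_\epsilon) \lesssim \epsilon^{-1/\lambda - \dd/\tau}$ in the higher-order case, and the same argument (tracked through the Lagrange multiplier computation, which used only $\lambda \ge 1/2$ and finiteness of $K$ via conditions~\eqref{eq:condition:alpha1}--\eqref{eq:condition:lambda}) gives the identical bound in the randomized case, yielding the stated exponent $a_\MDFEM = (p^* + \dd/\tau)/(1-p^*)$.

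I expect the main obstacle to be the refined estimate of $M(\epsilon)$: one needs to simultaneously use the $\ell^{p^*}$-summability to control $|\setu|$ via \RefProp{prop:cardinality-active-set}, and the explicit optimal $n_\setu$ from~\eqref{eq:ku} to avoid any $n_\setu$ that is too large relative to its $|\setu|$, so that the factor $\ln(n_\setu)/|\setu|$ really behaves like $\ln\ln(\epsilon^{-1})$ rather than like $\ln(\epsilon^{-1})$. The other steps are bookkeeping, and the two cases of the theorem differ only in whether a deterministic or mean-square interpretation of the cubature error is used.
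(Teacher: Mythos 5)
Your overall architecture is exactly the paper's: combine \RefProp{prop:bound-total-error}, \RefProp{prop:active-set-truncation-error} and \RefProp{prop:FE+Q-error} (with its randomized counterpart from \RefRem{rem:randomized-FE+Q-error} for the second bullet), read off the cost bound directly from \RefProp{prop:FE+Q-error}, and reduce the error claim to showing that $M(\epsilon) := \max\{1,\max_{\setu\in\setU(\epsilon)}(\ln(n_\setu)/|\setu|)^{\alpha_1|\setu|}\}$ equals $\epsilon^{-o(1)}$. The paper disposes of that last step by citing \cite[Theorem~1]{DN18} and \cite[Lemma~1]{PW11}; you attempt to prove it, and this is where your argument has a genuine flaw.

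The claim that $\ln(n_\setu)/|\setu| = O(\ln\ln(\epsilon^{-1}))$ uniformly over $\setu\in\setU(\epsilon)$ is false, and the heuristic you invoke for it (``big $n_\setu$ goes with big $|\setu|$'', read off from~\eqref{eq:ku}) is backwards: the factor $a_\setu^{(\tau+\dd)/(\tau+\lambda(\tau+\dd))}$ in~\eqref{eq:ku} contains $\gamma_\setu=\prod_{j\in\setu}\sqrt2\,b_j$, which decays as $\setu$ grows, so it is the small, important sets that receive the most cubature points. For $\setu=\{1\}$ one has $\ln(n_\setu)/|\setu| = \Theta(\ln(\epsilon^{-1}))$, not $O(\ln\ln(\epsilon^{-1}))$. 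The correct repair is the argument behind the cited lemmas: with $L:=\ln(\epsilon^{-1})$, bound $M(\epsilon)$ by $\max_{1\le j\le d}(CL/j)^{\alpha_1 j}$, where $d=d(\epsilon,p^*)=O(L/\ln L)$ by \RefProp{prop:cardinality-active-set} and $\ln(n_\setu)\le CL$ uniformly (which you did establish correctly via~\eqref{eq:ku} and~\eqref{eq:L-multiplier} with $K_\epsilon\le K<\infty$). The function $j\mapsto \alpha_1 j\ln(CL/j)$ is increasing on $(0,CL/\e)$, so its maximum over $j\le d$ is attained at $j=d$ and equals $O(d\ln(L/d))=O\bigl(\tfrac{L}{\ln L}\,\ln\ln L\bigr)=o(L)$. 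In other words, singletons contribute only $(CL)^{\alpha_1}=\epsilon^{-o(1)}$ because their exponent is tiny, and the largest sets contribute $\epsilon^{-o(1)}$ because for them the ratio is only $O(\ln L)$; the uniform bound on the ratio that you asserted is neither true nor needed. With this local fix your proof coincides with the paper's; the remaining steps, including the identical cost exponent in the randomized case, are correct bookkeeping.
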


\begin{proof}
We first show the statement for the case $0 < p^* \le (2 + \dd/\tau)^{-1}$.
It follows from Propositions~\ref{prop:bound-total-error},~\ref{prop:active-set-truncation-error} and~\ref{prop:FE+Q-error} that the error of the MDFEM is then bounded by
\begin{align*}
    \left| \II(G(u)) - Q_\epsilon(G(u)) \right|
    &\lesssim
    \max\left\{1, \max_{\setu \in \setU(\epsilon)} \left(\frac{\ln(n_\setu)}{|\setu|}\right)^{\alpha_1 |\setu|}\right\}
    \;
    \epsilon
\end{align*}
with computational cost
\begin{align*}
    \cost(Q_\epsilon)
    &\lesssim
    \epsilon^{-1/\lambda - \dd/\tau}
    =
    \epsilon^{(p^* + \dd/\tau)/(1 - p^*)}
    .
\end{align*}
Using~\eqref{eq:U-max-size} from \RefProp{prop:cardinality-active-set} and $\ln(n_\setu) \lesssim \ln(\epsilon^{-1/\lambda}) \lesssim \ln(\epsilon^{-1})$, see~\eqref{eq:ku} in combination with~\eqref{eq:L-multiplier}, we can use the same argument as in~\cite[Theorem~1]{NN21}, see also~\cite[Lemma~1]{PW11}, and we have
\begin{align*}
    \max\left\{1, \max_{\setu \in \setU(\epsilon)} \left(\frac{\ln(n_\setu)}{|\setu|}\right)^{\alpha_1 |\setu|}\right\}
    = 
    \epsilon^{-\delta(\epsilon)}
    ,
\end{align*}
where $\delta(\epsilon)= O\left(\ln(\ln(\ln(\epsilon^{-1})))/\ln(\ln(\epsilon^{-1}))\right) = o(1)$ as $\epsilon \to 0$.
Hence, we can write 
\begin{align*}
  \left|\II(G(u)) -Q_\epsilon(G(u))\right|
  &\lesssim
  \epsilon^{1 - o(1)}
  .
\end{align*}
For the case $(2+\dd/\tau)^{-1} < p^* \le (3/2 + \dd/(2\tau))^{-1}$ the statement follows using similar arguments, a full proof is provided in~\cite{NN21}. 
\end{proof}

Finally we would like to compare the complexity of the MDFEM with the quasi-Monte Carlo finite element method (QMCFEM)~\cite{HS19} and with the multilevel quasi-Monte Carlo finite element method (MLQMCFEM)~\cite{HS19ML}, both in the setting of using product weights and a wavelet expansion for the lognormal field.

The QMCFEM truncates the parameter vector $\bsy$ to some dimension~$s$ and then approximates the $s$-truncated PDE for different samples $\bsy^{(i)} \in \R^s$, $i=0,\ldots,N-1$, obtained by a QMC method.
Because $s$ might be arbitrarily large, the QMCFEM requires QMC rules with convergence independent of the dimension of the integrand.
Such QMC rules over the Euclidean space $\R^s$ with the Gaussian distribution were developed in~\cite{KSWWa10} by mapping \emph{randomly shifted lattice rules} over the unit cube $[0,1]^s$ to $\R^s$ by the inverse of the normal cumulative distribution.
However, since this mapping might damage the smoothness of the integrand, the convergence rate of these QMC rules was limited to first order (with respect to the number of QMC points~$N$).
Particularly, the QMCFEM was analysed in~\cite[Section~10]{HS19} under the same conditions as those of \RefThm{thm:main-theorem} (marked with the subscripts ``PROD'' for product weights and ``wav'' for wavelet expansion, and making use of ``Gaussian weight functions'' for the QMC rules) to achieve a root-mean-square error bound of the form
\begin{align}\label{eq:RMSE-QMCFEM}
  \sqrt{\E^{\Delta} \left[|\II(G(u)) - Q^\QMCFEM(G(u))|^2\right]}
  &\lesssim
  s^{-\left( \frac{2}{p^*} - \frac12 \right) + \delta}
  +
  N^{-\left(\frac12 \min\left\{ \frac32, \frac1{p^*} \right\} + \frac14 \right) + \delta}
  +
  h^\tau
  ,
\end{align}
where $\delta > 0$ is a parameter that might be chosen arbitrarily small (but then increases the hidden constant towards infinity), $N$ is the number of cubature points, $h$ is the FE mesh diameter and $s$ is the truncation dimension.
(Note that \cite[Section~10]{HS19} considers the specific choice of a Gaussian random field with Mat{\'e}rn covariance for which we (optimistically) set $p^* = d / \nu$, with $\nu$ the smoothness parameter of the Mat{\'e}rn covariance function. Note additionally that in \cite{BCM2018} it is proven that $p^*$ can be chosen arbitrarily close to $d/\nu$ in the case of a wavelet-type expansion.)
The cost for the QMCFEM can be estimated by $\cost(Q^\QMCFEM) \lesssim N(s + h^{-\dd})$.
Since we are interested in the asymptotic best possible rates, we will ignore the hidden constants and focus on the rate.
This means we formally set $\delta=0$ in~\eqref{eq:RMSE-QMCFEM} as a proxy of being arbitrarily close to the optimal rate and thereby obtaining a slightly optimistic bound on the work.
By balancing all three contributions to be $\epsilon/3$ we find
\begin{align*}
  \cost(Q^{\mathrm{QMCFEM}})
  &\lesssim
  \epsilon^{-a_\QMCFEM}
  ,
  \\
  \text{with\ \ }
  a_\QMCFEM
  &:=
  \begin{cases}
    1 + \max\{ 2p^*/(4-p^*), \dd/\tau \}, & \text{if } 0 < p^* \le 2/3 , \\
    4p^*/(2+p^*) + \max\{ 2p^*/(4-p^*), \dd/\tau \}, & \text{if } p^* \ge 2/3 .
  \end{cases}
\end{align*}
A smaller exponent means less work.
For both exponents, $a_\QMCFEM$ as just defined, and $a_\MDFEM$ as given in~\eqref{eq:a-MDFEM}, we can recognize two parts: the first part is the convergence order of the QMC cubature w.r.t.\ its cost and the second part is the convergence order of the linear functional applied to the FE approximation w.r.t.\ the cost of the FE approximation.

We now look at the case $0 < p^* \le (2 + \dd/\tau)^{-1} < 1/2 < 2/3$ when the MDFEM can employ a higher-order QMC cubature rule.
We then have $\lambda \ge 1$ and hence $a_\MDFEM$ has the factor $1/\lambda$ for the QMC cubature part.
For the QMCFEM the convergence of the QMC cubature is limited to $1$ and hence, for the case $0 < p^* \le 2/3$ we obtain
\begin{align*}
  a_\QMCFEM
  &=
  1 + \max\left\{\frac{2 p^*}{4-p^*}, \frac{\dd}{\tau}\right\}
  =
  \begin{cases}
  1 + \dd/\tau , & \text{if } 0 < p^* \le (4 \dd/\tau) (2 + \dd/\tau)^{-1} , \\
  1 + 2 p^* / (4-p^*) , & \text{if } p^* \ge (4 \dd/\tau) (2 + \dd/\tau)^{-1} . \\
  \end{cases}
\end{align*}
It is easy to see that if $\dd/\tau \ge 1/4$ then we have $0 < p^* \le (2 + \dd/\tau)^{-1} \le (4 \dd/\tau) (2 + \dd/\tau)^{-1}$.
Hence $a_\MDFEM = 1/\lambda + \dd/\tau \le a_\QMCFEM = 1 + \dd/\tau$ and the MDFEM and the QMCFEM perform the same when $\lambda = 1$ and the MDFEM will perform better when $\lambda > 1$.
On the other hand, if $\dd/\tau < 1/4$ then there are two cases to be considered.
When $0 < p^* \le (4 \dd/\tau) (2 + \dd/\tau)^{-1} < (2 + \dd/\tau)^{-1}$ we have exactly the same conclusion as for $\dd/\tau \ge 1/4$.
Lastly, when $0 < (4 \dd/\tau) (2 + \dd/\tau)^{-1} <  p^* \le (2 + \dd/\tau)^{-1}$ the QMCFEM is in the unfortunate case that the term for the FEM convergence is larger than $\dd/\tau$ and so it will always lose.

Hence, for any $p^*$ such that $0 < p^* \le (2 + \dd/\tau)^{-1}$ the MDFEM performs better or similar compared to the QMCFEM, with the possibility of higher-order convergence for the MDFEM.

\label{pp:compare-MLQMCFEM}As a second comparison we look at the multilevel variant of the QMCFEM which is the MLQMCFEM algorithm given in~\cite{HS19ML}.
Since the QMC rules for the MLQMCFEM are the same kind as those for the QMCFEM, they can achieve at most order~$1$ convergence and this requires $0 < p^* < 2/3$.
The error bounds in \cite[Theorem~6.3 and~6.5]{HS19ML} take more complicated forms in which $\bar{\xi}$ there is the convergence of the QMC rule on each level, comparable with our $\lambda$, and $\eta$ there is our $\ddelta$.
They are written in terms of the dominating cost, being either $\epsilon^{-1/\lambda}$, with $\lambda < 1$ due to the QMC rules there, or $\epsilon^{-\dd/\tau}$ with extra log factors depending on the situation.
So also in this case it is clear that in the case $0 < p^* \le (2 + \dd/\tau)^{-1} < 1/2 < 2/3$ if we take $\lambda \ge 1$ for the MDFEM and we can take $\dd/\tau \le 1/\lambda$ then the MDFEM performs better or similar compared to the MLQMCFEM, with the possibility of higher-order convergence for the MDFEM.

\section*{Acknowledgements}

The authors would like to thank the three anonymous referees for very careful and detailed commenting on the manuscript.
The authors would also like to thank Frances Y.\ Kuo and Kien V.\ Nguyen for discussion on parts of the manuscript.
We gratefully acknowledge the financial support from the Research Foundation Flanders (FWO) under grant G091920N.

\appendix

\section{Appendix}

We collect some results here which would otherwise disturb the flow of the paper.

\subsection{Proof of \RefLem{lem:anchored-decomposition}}\label{app:proof-anchored-props}

\begin{proof}[Proof of \RefLem{lem:anchored-decomposition}]
  The first property, \eqref{eq:prop1}, is a well known property of the anchored decomposition and we omit the proof.
  We now prove the second property, \eqref{eq:prop2}, which is, 
  \begin{align*}
		(\partial^{\bsomega_\setu}_{\bsy_\setu} F_\setu)(\bsy_\setu) 
		&=
		(\partial^{\bsomega_\setu}_{\bsy_\setu} \FF(\cdot_\setu))(\bsy_\setu)
		\quad \text{when } \forall j \in \setu : \omega_j \ge 1
		,
  \end{align*}
  i.e., for $\bsomega_\setu \in \N^{|\setu|}$.
  First we move the derivative operator inside the explicit form for $F_\setu$, see~\eqref{eq:anchored-decomposition-F},
  \begin{align*}
    (\partial^{\bsomega_\setu}_{\bsy_\setu} F_\setu)(\bsy_\setu)
    &=
    \sum_{\setv \subseteq \setu} (-1)^{|\setu|-|\setv|} \, (\partial^{\bsomega_\setu}_{\bsy_\setu} \FF(\cdot_\setv))(\bsy_\setv)
  \end{align*}
  and realize that $\FF(\cdot_\setv)$ is a function which is constant in all variables not in $\setv$, and as such if we have an $\omega_j \ge 1$ for which $j \notin \setv$ then the derivative $(\partial^{\bsomega_\setu}_{\bsy_\setu} \FF(\cdot_\setv))$ is zero.
  Since $\bsomega_\setu \in \N^{|\setu|}$ we have $\omega_j \ge 1$ for all $j \in \setu$ and thus the only remaining term for $\setv \subseteq \setu$ is the one for which~$\setv = \setu$.
  
  To show the third property, \eqref{eq:prop3}, which is
  \begin{align*}
	  (\partial^{\bsomega_\setu}_{\bsy_\setu} F_\setu)(\bsy^*_\setu)
	  &=
	  0
	  \quad \text{when } \exists j \in \setu : y^*_j = 0 \text{ and } \omega_j = 0
	  ,
  \end{align*}
  we first note that here $\bsomega_\setu \in \N_0^{|\setu|}$, i.e., $\omega_j$ is allowed to be zero for $j \in \setu$.
  Therefore, suppose there is a $j \in \setu$ such that $\omega_j = 0$, then we are actually not taking the partial derivative w.r.t.\ the $j$th variable, and by the definition of the partial derivative this means we keep the $j$th variable fixed while we take the derivatives w.r.t.\ the variables in $\supp(\bsomega_\setu)$.
  Then, in the case that $\omega_j = 0$ we can do the evaluation at $y^*_j$ before taking the other partial derivatives, i.e., $(\partial^{\bsomega_\setu}_{\bsy_\setu} F_\setu)(\bsy^*_\setu) = (\partial^{\bsomega_\setu}_{\bsy_\setu} (F_\setu|_{y_j=y^*_j}))(\bsy^*_{\setu \setminus \{j\}})$.
  Since $j \in \setu$ we can use the first property~\eqref{eq:prop1} to deduce that $F_\setu|_{y_j=y^*_j} = 0$ when~$y^*_j = 0$.
\end{proof}

\subsection{Derivation of anchored Sobolev kernels and Taylor representation}\label{app:kernel}

We provide the derivation of the reproducing kernel of a univariate anchored Sobolev space w.r.t.\ a positive weight function $\rho$ for anchored functions.
In particular this shows how to obtain the kernel given in~\eqref{eq:kernel-a-R} for the anchored Gaussian Sobolev space for anchored functions.
For $\alpha \in \N$ the space consists of functions that have absolutely continuous derivatives up to order $\alpha-1$ for any bounded interval and have square integrable derivative of order $\alpha$ w.r.t.\ the weight function $\rho$ and are anchored at~$0$.
The domain is the support of~$\rho$.
These spaces are meant to be used for functions $F_\setu$ obtained by the anchored decomposition~\eqref{eq:anchored-decomposition-F}.

In accordance with \cite[Example~4.4]{KSWW10b} and \cite{GMR14}, the only constant function in our univariate anchored space is the zero function, and we can use the tensor product of this univariate kernel to create a multivariate kernel which represents multivariate anchored functions.
A reproducing kernel $K$ for a reproducing kernel Hilbert space $\HH(K)$ has the property that $K(x,y)$ as a function of $y$ is a function in $\HH(K)$ for any $x$ in the domain.
To satisfy this property, the kernel for the anchored Sobolev space on the unit cube for anchored functions in \cite[Example~4.2]{KSWW10b} and \cite[Section~5.2]{DG14}, should be amended to have the sum over the derivatives from $1$ to $\alpha-1$ where the ``$0$~otherwise'' appears.
Then it agrees with the kernel given here.

Our proof uses similar arguments as in~\cite[Section~1.2]{Wah90} where the kernel of the \emph{anchored Sobolev space} over the unit cube is derived, but we modify the techniques for the case when the inner product contains a positive weight function $\rho$ in the $L^2$ inner product of the $\alpha$th derivatives, and when the functions are anchored.

\begin{proposition}\label{prop:kernel}
  The reproducing kernel of the anchored Sobolev space for anchored functions $H_{\alpha,0,\rho}(\R)$, with $\rho(t) = \rho(-t)$, e.g., the anchored Gaussian Sobolev space, with inner product
  \begin{align*}
  \langle F, G \rangle_{H_{\alpha,0,\rho}(\R)}
  &:=
  \sum_{\tau=1}^{\alpha-1} F^{(\tau)}(0) \, G^{(\tau)}(0)
  +
  \int_\R F^{(\alpha)}(y) \, G^{(\alpha)}(y) \, \rho(y) \rd y
  ,
  \end{align*}
  is given by
  \begin{align*}
  K_{\alpha,0,\rho}(x,y)
  &:=
  \sum_{\tau=1}^{\alpha-1} \frac{x^\tau}{\tau!} \frac{y^\tau}{\tau!}
  +
  \mathds{1}\{xy > 0\}
  \int_0^{\min\{|x|,|y|\}} 
    \frac{(|x|-t)^{\alpha-1}}{(\alpha-1)!}
    \frac{(|y|-t)^{\alpha-1}}{(\alpha-1)!}
    \frac1{\rho(t)} 
    \rd t
  ,
  \end{align*}
  where $\mathds{1}\{X\}$ is the indicator function on~$X$.
\end{proposition}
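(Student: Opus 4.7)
My plan is to follow the standard verification approach for a candidate reproducing kernel: for each fixed $y$ in the domain I would show that $K_{\alpha,0,\rho}(\cdot, y)$ belongs to $H_{\alpha,0,\rho}(\R)$, and then verify the reproducing identity $\langle F, K_{\alpha,0,\rho}(\cdot,y) \rangle_{H_{\alpha,0,\rho}(\R)} = F(y)$ for every $F$ in the space. The candidate formula is essentially reverse-engineered from Taylor's theorem with integral remainder: since $F(0) = 0$, for $y \ge 0$ one has
\begin{equation*}
  F(y)
  =
  \sum_{\tau=1}^{\alpha-1} \frac{y^\tau}{\tau!} \, F^{(\tau)}(0)
  +
  \int_0^y \frac{(y-t)^{\alpha-1}}{(\alpha-1)!} \, F^{(\alpha)}(t) \rd t
  ,
\end{equation*}
and I would design the kernel so that the inner product with $F$ reconstructs exactly this formula, pulling the weight $\rho(t)$ out of the integrand into the remainder kernel via the compensating factor $1/\rho(t)$.

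For the membership I would first note that at $x = 0$ both the polynomial sum and the indicator $\mathds{1}\{xy > 0\}$ vanish, so $K_{\alpha,0,\rho}(0,y) = 0$ matches the anchoring requirement. Fixing $y > 0$, the map $x \mapsto K_{\alpha,0,\rho}(x,y)$ is a polynomial on $(-\infty,0]$ and a polynomial plus a smooth integral term on $[0,\infty)$; the two pieces and their first $\alpha - 1$ derivatives agree at $x = 0$ because each factor $(x-t)^{\alpha-1-\tau}$ in the derivative of the integral vanishes over the empty interval at $x = 0$. Thus $K_{\alpha,0,\rho}(\cdot,y)$ is absolutely continuous together with its first $\alpha-1$ derivatives, and a direct computation (differentiating the convolution-type integral) gives
\begin{equation*}
  \frac{\partial^\alpha K_{\alpha,0,\rho}}{\partial x^\alpha}(x,y)
  =
  \mathds{1}\{0 < x < y\} \, \frac{(y-x)^{\alpha-1}}{(\alpha-1)! \, \rho(x)}
  ,
\end{equation*}
which is square integrable against $\rho$ on the bounded interval $[0,y]$.

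Next I would establish the reproducing identity for $y > 0$. Evaluating the first $\alpha-1$ $x$-derivatives of $K_{\alpha,0,\rho}(\cdot, y)$ at $x = 0$ yields $y^\tau/\tau!$ from the polynomial sum, while the integral contribution is zero at $x = 0$ for $\tau < \alpha$. Substituting into the inner product then gives
\begin{equation*}
  \langle F, K_{\alpha,0,\rho}(\cdot,y) \rangle_{H_{\alpha,0,\rho}(\R)}
  =
  \sum_{\tau=1}^{\alpha-1} \frac{y^\tau}{\tau!} \, F^{(\tau)}(0)
  +
  \int_0^y F^{(\alpha)}(x) \, \frac{(y-x)^{\alpha-1}}{(\alpha-1)!} \rd x
  =
  F(y)
  ,
\end{equation*}
the last equality being Taylor's theorem with integral remainder applied to the anchored $F$. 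The case $y < 0$ I would reduce to $y > 0$ using the symmetry $\rho(-t) = \rho(t)$: this is precisely why the kernel is written with $|x|$, $|y|$ and $\mathds{1}\{xy > 0\}$, since under the change of variable $x \mapsto -x$ the weight is invariant and the integral over $[-|y|,0]$ transforms into one over $[0,|y|]$, reproducing the Taylor expansion on the negative side in exactly the same form. The case $y = 0$ is trivial since both sides vanish.

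The main technical nuisance I expect is the piecewise nature introduced by the indicator and absolute values, which forces a case split on the signs of $x$ and $y$ and requires care in checking that the derivatives patch continuously across $x = 0$; once the symmetry argument is in place, every case collapses to the single positive-$y$ computation, and no tool beyond Taylor's theorem with integral remainder is needed.
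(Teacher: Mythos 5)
Your proposal is correct and rests on exactly the same ingredients as the paper's proof: Taylor's theorem with integral remainder for anchored functions, the identification $K_{\alpha,0,\rho}^{(\tau)}(0,y)=y^\tau/\tau!$ for $1\le\tau\le\alpha-1$ and $K_{\alpha,0,\rho}^{(\alpha)}(x,y)=\mathds{1}\{0<x<y\}\,(y-x)^{\alpha-1}/((\alpha-1)!\,\rho(x))$ for $y>0$, and the symmetry $\rho(t)=\rho(-t)$ to collapse the $y<0$ case. The only difference is direction of exposition -- the paper \emph{derives} the kernel by matching the reproducing identity against the Taylor expansion and then reconstructing $K$ from its own Taylor data, whereas you \emph{verify} the stated formula -- but the computations are identical.
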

\begin{proof}
By Taylor's theorem, and using $F(0) = 0$, we have
\begin{align*}
    F(y)
    =
    \sum_{\tau=1}^{\alpha-1} \frac{y^\tau}{\tau!} \, F^{(\tau)}(0)
    +
    \int_0^y \frac{(y-t)^{\alpha-1}}{(\alpha-1)!} \, F^{(\alpha)}(t) \rd t
    =
    \sum_{\tau=1}^{\alpha-1} \frac{y^\tau}{\tau!} \, F^{(\tau)}(0)
    -
    \int_{y}^0 \frac{(y-t)^{\alpha-1}}{(\alpha-1)!} \, F^{(\alpha)}(t) \rd t
    ,
\end{align*}
where we have written both forms as this helps to interpret how our kernel will operate in connection to the integral over $\R$ in the inner product.
By the reproducing property we require
\begin{align*}
    &F(y)
    =
    \langle F, K_{\alpha,0,\rho}(\cdot,y) \rangle_{H_{\alpha,0,\rho}(\R)}
    =
    \sum_{\tau=1}^{\alpha-1} F^{(\tau)}(0) \, K_{\alpha,0,\rho}^{(\tau)}(0,y)
    +
    \int_\R F^{(\alpha)}(t) \, K_{\alpha,0,\rho}^{(\alpha)}(t,y) \, \rho(t) \rd t
    \\
    &=
    \sum_{\tau=1}^{\alpha-1} F^{(\tau)}(0) \, K_{\alpha,0,\rho}^{(\tau)}(0,y)
    +
    \int_0^{+\infty} F^{(\alpha)}(t) \, K_{\alpha,0,\rho}^{(\alpha)}(t,y) \, \rho(t) \rd t
    +
    \int_{-\infty}^0 F^{(\alpha)}(t) \, K_{\alpha,0,\rho}^{(\alpha)}(t,y) \, \rho(t) \rd t
    ,
\end{align*}
where the derivatives of $K_{\alpha,0,\rho}$ are taken with respect to the first variable.

Comparing the two representations of $F$ leads us to choose the kernel $K_{\alpha,0,\rho}$ such that
\begin{align}\label{eq:devK1}
    K_{\alpha,0,\rho}^{(\tau)}(0,y) 
    &=
    \frac{y^\tau}{\tau!}
    \qquad \text{ for } \tau = 1, \ldots, \alpha-1
\end{align}
and
\begin{align}\label{eq:devK2}
    K_{\alpha,0,\rho}^{(\alpha)}(t,y) 
    &=
    \begin{cases*}
        \displaystyle
        \frac{(y-t)^{\alpha-1}}{(\alpha-1)!} \, \frac1{\rho(t)} \, \mathds{1}\{t \in [0,y]\}
        , & if $y > 0$, \\[1mm]
        \displaystyle
        \frac{-(y-t)^{\alpha-1}}{(\alpha-1)!} \, \frac1{\rho(t)} \, \mathds{1}\{t \in [y,0]\}
        , & if $y < 0$, \\[1mm]
        0
        , & if $y=0$.
    \end{cases*}
\end{align}
Since $K_{\alpha,0,\rho}(\cdot,y)$ itself needs to be a function from the space $H_{\alpha,0,\rho}(\R)$ we require that $K_{\alpha,0,\rho}(0,y)=0$ for any $y$ and its Taylor expansion with respect to the first variable is given by
\begin{align}\label{eq:TaylorK}
    K_{\alpha,0,\rho}(x,y)
    =
    \sum_{\tau=1}^{\alpha-1} \frac{x^\tau}{\tau!} \, K_{\alpha,0,\rho}^{(\tau)} (0,y) 
    +
    \int_0^x \frac{(x-t)^{\alpha-1}}{(\alpha-1)!} \, K_{\alpha,0,\rho}^{(\alpha)}(t,y) \rd t
    .
\end{align}
Therefore, inserting \eqref{eq:devK1} and~\eqref{eq:devK2} into~\eqref{eq:TaylorK} we find for $y \ge 0$ that
\begin{align*}
    K_{\alpha,0,\rho}(x,y)
    &=
    \sum_{\tau=1}^{\alpha-1} \frac{x^\tau}{\tau!} \frac{y^\tau}{\tau!}
    +
    \int_0^x \frac{(x-t)^{\alpha-1}}{(\alpha-1)!} \frac{(y-t)^{\alpha-1}}{(\alpha-1)!}
    \, \frac1{\rho(t)} 
    \, \mathds{1}\{t \in [0,y]\} \rd t
    ,
\end{align*}
which can be written as 
\begin{align*}
    K_{\alpha,0,\rho}(x,y)
    &=
    \begin{cases*}
        \displaystyle
        \sum_{\tau=1}^{\alpha-1} \frac{x^\tau}{\tau!} \frac{y^\tau}{\tau!}
        +
        \int_0^{\min\{x,y\}} \frac{(x-t)^{\alpha-1}}{(\alpha-1)!} \frac{(y-t)^{\alpha-1}}{(\alpha-1)!}
        \frac1{\rho(t)}  \rd t
        ,
        & if $x,y \ge 0$,
        \\[1mm]
        \displaystyle
        \sum_{\tau=1}^{\alpha-1} \frac{x^\tau}{\tau!} \frac{y^\tau}{\tau!}
        ,
        & if $x \le 0$ and $y \ge 0$.
    \end{cases*} 
\end{align*}
Similarly, for $y \le 0$ we have 
\begin{align*}
    K_{\alpha,0,\rho}(x,y)
    =
    \begin{cases*}
        \displaystyle
        \sum_{\tau=1}^{\alpha-1} \frac{x^\tau}{\tau!} \frac{y^\tau}{\tau!}
        +
        \int_{\max\{x,y\}}^0 \frac{(x-t)^{\alpha-1}}{(\alpha-1)!} \frac{(y-t)^{\alpha-1}}{(\alpha-1)!}
        \frac1{\rho(t)}  \rd t
        ,
        & if $x,y \le 0$,
        \\[1mm]
        \displaystyle
        \sum_{\tau=1}^{\alpha-1} \frac{x^\tau}{\tau!} \frac{y^\tau}{\tau!}
        ,
        & if $x \ge 0$ and $y \le 0$.
    \end{cases*} 
\end{align*}
Hence, under the assumption that $\rho(t) = \rho(-t)$, the explicit formula for $K_{\alpha,0,\rho}$ can be written as
\begin{align*}
    K_{\alpha,0,\rho}(x,y)
    &=
    \sum_{\tau=1}^{\alpha-1} \frac{x^\tau}{\tau!} \frac{y^\tau}{\tau!}
    +
    \mathds{1}\{xy > 0\}
    \int_0^{\min\{|x|,|y|\}} 
    \frac{(|x|-t)^{\alpha-1}}{(\alpha-1)!}
    \frac{(|y|-t)^{\alpha-1}}{(\alpha-1)!}
    \frac1{\rho(t)} 
    \rd t
    .
    \qedhere
\end{align*}
\end{proof}

We have stated \RefProp{prop:kernel} in a form which is relevant to the paper, but in fact the proof is more general.
As stated, we do not require $\rho$ to be the Gaussian density. The condition $\rho(t) = \rho(-t)$ in the proposition is only stated to obtain an easy final expression and could be removed.
The resulting kernel is valid for other positive weight functions $\rho$ and we assume the domain of the integrals are then truncated to the support of~$\rho$.
E.g., $\rho$ could be the uniform density on the unit cube and we then recover the anchored Sobolev space for anchored functions on $[0,1]$.
If we want to drop the requirement that $F(0) = 0$ then we add the constant one to the kernel and include the term for $\tau = 0$ in the inner product.
Such a kernel for $\rho \equiv 1$ over $\R$ is given in~\cite[Section~11.5.1]{NW10}.
The statements are also easily generalizable for an arbitrary anchor point, similar as the kernels in \cite[Example~4.2]{KSWW10b} and~\cite[Section~5.2]{DG14}.

We take the multivariate space to be the tensor product of the univariate spaces and therefore the kernel is obtained as the product, see, e.g., \cite[Example~4.4]{KSWW10b}.
For anchored functions in such a multivariate anchored Sobolev space (not necessarily just the Gaussian case as we use in this paper) we have the following representation as a Taylor series with integral remainder term.

\begin{lemma}\label{lem:Taylor}
  For $\setu \subset \N$ with $1 \le |\setu| < \infty$, assume $F_\setu \in H_{\alpha,0,\rho,|\setu|}(\Omega^{|\setu|})$, with $\Omega$ the support of the positive weight function~$\rho$, is obtained from an anchored decomposition~\eqref{eq:anchored-decomposition-F} of a function~$\FF$.
  Then we have the representation
\begin{align}\label{eq:Taylor-Fu}
    F_\setu(\bsy_\setu)
    &=
    \sum_{\substack{\bsnu_\setu \in \{1:\alpha\}^{|\setu|} \\ \setv := \{ j : \nu_j = \alpha\} }}
    \; \left[ \prod_{j \in \setu \setminus \setv} \frac{y_j^{\nu_j}}{\nu_j!} \right] \;
    \int_{\bszero_\setv}^{\bsy_\setv}
    F_\setu^{(\bsnu_\setu)}(\bst_\setv) 
    \prod_{j \in \setv} \frac{(y_j-t_j)^{\alpha-1}}{(\alpha-1)!}
    \rd \bst_\setv
    ,
\end{align}
where $F_\setu^{(\bsnu_\setu)}(\bst_\setv)$ means to evaluate the function $F_\setu^{(\bsnu_\setu)}$ in the point $\bst_\setv \in \R^\N_\setv$, i.e., setting all other arguments to zero, or, if viewing $F_\setu^{(\bsnu_\setu)}$ as a $|\setu|$-variate function then $F_\setu^{(\bsnu_\setu)}(\bst_\setv) = F_\setu^{(\bsnu_\setu)}(\bst_\setv, \bszero_{\setu \setminus \setv})$.
For $\bsomega_\setu \in \{0:\alpha\}^{|\setu|}$ we have
\begin{align}\label{eq:Taylor-DFu}
    F_\setu^{(\bsomega_\setu)}(\bsy_\setu)
    &=
    \sum_{\substack{\bsomega_\setu \le \bsnu_\setu \in \{1:\alpha\}^{|\setu|} \\ \setw := \{ j : \omega_j = \alpha \} \\ \setv := \{ j \notin \setw : \nu_j = \alpha \} \\ \setz := \setu \setminus (\setv \cup \setw)}}
    \; \left[ \prod_{j \in \setz} \frac{y_j^{\nu_j - \omega_j}}{(\nu_j - \omega_j)!} \right] \;
    \int_{\bszero_\setv}^{\bsy_\setv}
    F_\setu^{(\bsnu_\setu)}(\bst_\setv, \bsy_\setw)
    \prod_{\substack{j \in \setv}} \frac{(y_j-t_j)^{\alpha-\omega_j-1}}{(\alpha-\omega_j-1)!}
    \rd \bst_\setv
    .
\end{align}
The functions $F_\setu^{(\bsnu_\setu)}$ under the integrals could be replaced with $\FF^{(\bsnu_\setu)}$ in both~\eqref{eq:Taylor-Fu} and~\eqref{eq:Taylor-DFu}.
\end{lemma}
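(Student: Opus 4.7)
The plan is to prove both identities by iterated application of the univariate Taylor theorem with integral remainder, one variable at a time over the indices in $\setu$, using the vanishing properties \eqref{eq:prop1} and \eqref{eq:prop3} of the anchored decomposition to systematically eliminate the $0$-th order Taylor terms. The building block is the standard univariate expansion
\begin{align*}
  g(y) = \sum_{k=0}^{\alpha-1} \frac{y^k}{k!} \, g^{(k)}(0) + \int_0^y \frac{(y-t)^{\alpha-1}}{(\alpha-1)!} \, g^{(\alpha)}(t) \rd t,
\end{align*}
applied to a univariate slice of the current function. Membership $F_\setu \in H_{\alpha,0,\rho,|\setu|}(\Omega^{|\setu|})$ supplies absolute continuity of derivatives up to order $\alpha-1$ on bounded intervals and square integrability of the $\alpha$-th derivative, which justifies each step and the interchanges of Taylor expansion with integrals introduced at earlier steps.

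For \eqref{eq:Taylor-Fu}, I fix any enumeration $\setu = \{j_1, \ldots, j_{|\setu|}\}$ and apply the univariate expansion in $y_{j_1}, y_{j_2}, \ldots$ successively. At the first step, the $k=0$ term $F_\setu(\bsy_\setu)|_{y_{j_1}=0}$ vanishes by \eqref{eq:prop1}, so the surviving range is $\nu_{j_1} \in \{1:\alpha\}$, with $\nu_{j_1} < \alpha$ producing a polynomial coefficient and $\nu_{j_1} = \alpha$ producing an integral remainder. At step $k \ge 2$, the function being expanded is a partial derivative of $F_\setu$ whose multi-index is zero in positions $j_k, j_{k+1}, \ldots$, so property \eqref{eq:prop3} applied with $\omega_{j_k} = 0$ and $y_{j_k} = 0$ annihilates the $0$-th order contribution, leaving $\nu_{j_k} \in \{1:\alpha\}$. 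After $|\setu|$ iterations, writing $\setv := \{ j \in \setu : \nu_j = \alpha \}$ for the variables that ended with an integral remainder and collecting polynomial and integral factors reassembles \eqref{eq:Taylor-Fu}.

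For \eqref{eq:Taylor-DFu}, the same strategy is applied starting from $F_\setu^{(\bsomega_\setu)}$, but expanding only in variables $j \in \setu \setminus \setw$ with $\setw := \{ j : \omega_j = \alpha \}$; variables in $\setw$ already carry the maximum derivative order $\alpha$ and therefore remain evaluated at $y_j$. In each $j \notin \setw$, a Taylor expansion of order $\alpha - \omega_j$ yields polynomial coefficients $y_j^{\nu_j - \omega_j}/(\nu_j - \omega_j)!$ for $\nu_j \in \{\omega_j, \ldots, \alpha-1\}$ together with the integral remainder corresponding to $\nu_j = \alpha$. When $\omega_j = 0$, property \eqref{eq:prop3} again forces the $\nu_j = 0$ term to vanish, trimming the range to $\nu_j \in \{1:\alpha\}$; when $\omega_j \ge 1$ the bound $\nu_j \ge \omega_j$ already enforces $\nu_j \ge 1$. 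In either case the joint constraint is $\bsomega_\setu \le \bsnu_\setu \in \{1:\alpha\}^{|\setu|}$, and the partition $(\setv, \setw, \setz)$ distinguishes integral-remainder, fixed-derivative and polynomial-Taylor indices exactly as in the statement. Since every surviving $\bsnu_\setu$ in either formula satisfies $\nu_j \ge 1$ for all $j \in \setu$, property \eqref{eq:prop2} gives $(\partial^{\bsnu_\setu}_{\bsy_\setu} F_\setu)(\cdot) = (\partial^{\bsnu_\setu}_{\bsy_\setu} \FF(\cdot_\setu))(\cdot)$, so the replacement announced in the lemma is immediate.

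The main obstacle I anticipate is the combinatorial bookkeeping: checking at each of the $|\setu|$ recursion steps that the vanishing of the $0$-th order term is correctly triggered by \eqref{eq:prop3}, and that the product of univariate Taylor factors accumulated along the way reassembles into the compact multi-index form indexed by $(\setv, \setw, \setz)$. The analytical content is routine once the univariate expansion is in hand; the real challenge is notational discipline in tracking which positions are integration variables, which are pinned at $y_j$, and which are anchored at $0$ at every stage of the recursion.
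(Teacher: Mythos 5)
Your proposal is correct and follows essentially the same route as the paper: successive univariate Taylor expansions with integral remainder in each coordinate of $\setu$ (excluding $\setw$ for the derivative case), with the zeroth-order terms annihilated via the anchoring properties~\eqref{eq:prop1}/\eqref{eq:prop3} and the final replacement by $\FF^{(\bsnu_\setu)}$ justified by~\eqref{eq:prop2}. The paper simply writes the full multivariate expansion over $\bsnu_\setu \in \{0:\alpha\}^{|\setu|}$ at once and then deletes the terms with some $\nu_j = 0$, whereas you track the vanishing step by step; the two are equivalent.
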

\begin{proof}
  Functions in $H_{\alpha,0,\rho,|\setu|}(\Omega^{|\setu|})$ satisfy all requirements to use the Taylor theorem with integral remainder up to order $\alpha$ in each direction successively, i.e., they have absolutely continuous derivatives up to order $\alpha-1$ for any bounded interval.
  Hence
\begin{align*}
    F_\setu(\bsy_\setu)
    &=
    \sum_{\substack{\bsnu_\setu \in \{0:\alpha\}^{|\setu|} \\ \setv := \{ j : \nu_j = \alpha\} }}
    \; \left[ \prod_{j \in \setu \setminus \setv} \frac{y_j^{\nu_j}}{\nu_j!} \right] \;
    \int_{\bszero_\setv}^{\bsy_\setv}
    F_\setu^{(\bsnu_\setu)}(\bst_\setv) 
    \prod_{j \in \setv} \frac{(y_j-t_j)^{\alpha-1}}{(\alpha-1)!}
    \rd \bst_\setv
    ,
\end{align*}
  where we remark that we wrote $\bsnu_\setu \in \{0:\alpha\}^{|\setu|}$, i.e., $\nu_j$ is allowed to be zero also for $j \in \setu$.
  We can now make use of \RefLem{lem:anchored-decomposition}.
  In particular, property~\eqref{eq:prop3} implies that any term for which there is at least one $j \in \setu$ for which $\nu_j = 0$ will vanish.
  This proves~\eqref{eq:Taylor-Fu}.
    
  To obtain the expression for $F_\setu^{(\bsomega_\setu)}(\bsy_\setu)$ for $\bsomega_\setu \in \{0:\alpha\}^{|\setu|}$ we work in a similar way by applying the Taylor theorem with integral remainder term to $F_\setu^{(\bsomega_\setu)}$ for each $\omega_j < \alpha$.
  For $\omega_j = \alpha$ we cannot apply the Taylor theorem anymore, so we need to take care that in our expression we can just recover the $\alpha$th derivatives, i.e., we should not integrate those components for which $\omega_j = \alpha$.
  Therefore we introduce the set $\setw := \{ j \in \setu : \omega_j = \alpha \}$.
  As in the expression for $F_\setu$ we gather the indices for which we need the integral in the set $\setv$ which is now modified to exclude those indices in the set~$\setw$.
  This proves~\eqref{eq:Taylor-DFu}.

  By property~\eqref{eq:prop2} we also know that $F_\setu^{(\bsnu_\setu)}(\bsy_\setu) = \FF^{(\bsnu_\setu)}(\bsy_\setu)$ for $\bsnu_\setu \in \{1:\alpha\}^{|\setu|}$ and any $\bsy_\setu$ so also for $\bsy_\setu = (\bst_\setv, \bszero_{\setu \setminus \setv}) \in \R^\N_\setu$ or $\bsy_\setu = (\bst_\setv, \bsy_\setw, \bszero_\setz) \in \R^\N_\setu$ as in~\eqref{eq:Taylor-Fu} and~\eqref{eq:Taylor-DFu} respectively.  
\end{proof}

\subsection{Proof of \RefProp{prop:embedding-F-rho-T}: norm embedding after mapping}\label{app:proof-embedding}

We deferred the proof of \RefProp{prop:embedding-F-rho-T} due to its length.
We first show two lemmas that we need for the proof.
The following result is taken from~\cite[Lemma~3]{DILP18}.
 
\begin{lemma}\label{lem:derivative-F-rho}
 	For $F : \R^s \to \R$ having mixed partial derivatives for all $\bstau \in \{0:\alpha\}^s$ we have for any $\bstau \in \{0:\alpha\}^s$ that
 	\begin{align*}
 	(F \rho)^{(\bstau)}(\bsy)
 	=
 	\rho(\bsy)
 	\sum_{\bsomega \le \bstau}
 	(-1)^{|\bstau-\bsomega|} \,
 	c(\bstau,\bsomega)
 	\,
 	H_{\bstau-\bsomega}(\bsy)
 	\,
 	F^{(\bsomega)}(\bsy)
 	,
 	\end{align*}
 	where the sum is over all $\bsomega \in \N_0^s$ for which $0 \le \omega_j \le \tau_j$ for all $j \in \{1,2,\ldots,s\}$, $c(\bstau,\bsomega) := {\bstau \choose \bstau-\bsomega}\sqrt{(\bstau-\bsomega)!} := \prod_{j =1}^s {\tau_j \choose \tau_j -\omega_j} \sqrt{(\tau_j -\omega_j)!}$ and $H_{\bstau}(\bsy) := \prod_{j=1}^s H_{\tau_j}(y_j)$, with $H_\tau$, for $\tau \in \N_0$, the $\tau$th normalized probabilistic Hermite polynomial given by
 	\begin{align}\label{eq:Hermite-pol}
 	H_\tau(y)
 	:= 
 	\sqrt{\tau!} \sum_{k=0}^{\lfloor \tau/2 \rfloor} \frac{(-1)^k}{k! \, (\tau-2k)!} \frac{y^{\tau-2k}}{2^k}
 	.
 	\end{align}
\end{lemma}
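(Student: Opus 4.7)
The proof plan is a direct application of the multivariate Leibniz product rule combined with the classical identity that relates derivatives of the Gaussian density to (probabilistic) Hermite polynomials. Because the product $\rho(\bsy) = \prod_{j=1}^s \rho(y_j)$ factors tensorially and the derivative operator $\partial^{\bstau}_{\bsy} = \prod_j \partial^{\tau_j}_{y_j}$ does too, the whole identity reduces to the univariate statement applied coordinatewise.

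First, I would establish the univariate Hermite identity
\begin{align*}
  \frac{\rmd^\tau}{\rmd y^\tau} \rho(y)
  =
  (-1)^\tau \, \sqrt{\tau!} \, H_\tau(y) \, \rho(y),
\end{align*}
where $H_\tau$ is the normalized probabilistic Hermite polynomial defined in~\eqref{eq:Hermite-pol}. This follows from Rodrigues' formula $\tilde H_\tau(y) = (-1)^\tau \e^{y^2/2} \frac{\rmd^\tau}{\rmd y^\tau} \e^{-y^2/2}$ for the (unnormalized) probabilists' Hermite polynomial $\tilde H_\tau$, combined with the normalization $H_\tau = \tilde H_\tau/\sqrt{\tau!}$ and the definition $\rho(y) = \e^{-y^2/2}/\sqrt{2\pi}$. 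Taking the product over coordinates yields
\begin{align*}
  \partial^{\bsxi}_{\bsy} \rho(\bsy)
  =
  (-1)^{|\bsxi|} \, \sqrt{\bsxi!} \, H_{\bsxi}(\bsy) \, \rho(\bsy),
  \qquad \bsxi \in \N_0^s.
\end{align*}

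Second, I would apply the multivariate Leibniz rule to $F\rho$. Since the partial derivatives in different coordinates commute and the assumed mixed partial derivatives of $F$ up to order $\alpha$ in each variable exist, we obtain
\begin{align*}
  (F\rho)^{(\bstau)}(\bsy)
  =
  \sum_{\bsomega \le \bstau} \binom{\bstau}{\bsomega} \, F^{(\bsomega)}(\bsy) \, \rho^{(\bstau-\bsomega)}(\bsy),
\end{align*}
where $\binom{\bstau}{\bsomega} = \prod_j \binom{\tau_j}{\omega_j}$. Substituting the Hermite identity with $\bsxi = \bstau - \bsomega$ and using the symmetry $\binom{\bstau}{\bsomega} = \binom{\bstau}{\bstau - \bsomega}$ yields
\begin{align*}
  (F\rho)^{(\bstau)}(\bsy)
  =
  \rho(\bsy) \sum_{\bsomega \le \bstau} (-1)^{|\bstau-\bsomega|} \binom{\bstau}{\bstau-\bsomega} \sqrt{(\bstau-\bsomega)!} \, H_{\bstau-\bsomega}(\bsy) \, F^{(\bsomega)}(\bsy),
\end{align*}
which is exactly the claimed formula upon recognizing $c(\bstau,\bsomega) = \binom{\bstau}{\bstau - \bsomega} \sqrt{(\bstau-\bsomega)!}$.

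There is no real obstacle; the statement is essentially a bookkeeping lemma. The only subtlety worth flagging is keeping the sign $(-1)^{|\bstau-\bsomega|}$ and the square-root factor $\sqrt{(\bstau-\bsomega)!}$ consistent with the chosen normalization of $H_\tau$, and verifying that the product form $H_{\bstau}(\bsy) = \prod_j H_{\tau_j}(y_j)$ in the statement matches the one produced by differentiating $\rho(\bsy)$ component by component. Both checks are immediate from the definition~\eqref{eq:Hermite-pol}.
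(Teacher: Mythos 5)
Your proof is correct. Note that the paper does not actually prove this lemma; it simply imports it by citing \cite[Lemma~3]{DILP18}, so your argument supplies the proof the paper omits. The route you take --- the Rodrigues identity $\rho^{(\tau)}(y) = (-1)^\tau \sqrt{\tau!}\, H_\tau(y)\, \rho(y)$ for the normalized probabilists' Hermite polynomials, followed by the multivariate Leibniz rule and tensorization over coordinates --- is the standard one, and your bookkeeping of the sign $(-1)^{|\bstau-\bsomega|}$, the factor $\sqrt{(\bstau-\bsomega)!}$, and the symmetry $\binom{\bstau}{\bsomega} = \binom{\bstau}{\bstau-\bsomega}$ all check out against the normalization in~\eqref{eq:Hermite-pol}.
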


\begin{lemma}\label{lem:polynomial-hermite-bound}
	For $\alpha \in \N$, $\tau \in \{0:\alpha\}$ and $\eta \in [0,2\alpha-1]$ we have
	\begin{align}\label{eq:def:C-diamond}
	C_{\diamond,\alpha}
	&:=
	\int_\R
	|H_\tau(y)|^2 \,
	|y|^{\eta} \,
	\rho(y)
	\rd y
	\le
		\alpha! \,
		(1 + \alpha/2) \,
		\frac{2^{2\alpha}}{\sqrt{2\pi}} \,
		\Gamma(2\alpha) \,
		I_0(1/2)
	,
	\end{align}
	with $I_0$ the modified Bessel function of the first kind of order~$0$.
\end{lemma}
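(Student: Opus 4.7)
The plan is to start from the explicit formula~\eqref{eq:Hermite-pol} and estimate $|H_\tau|$ by the triangle inequality, obtaining
\begin{align*}
  |H_\tau(y)|
  \le
  \sqrt{\tau!}\sum_{k=0}^{\lfloor\tau/2\rfloor}\frac{|y|^{\tau-2k}}{k!\,(\tau-2k)!\,2^k}
  .
\end{align*}
Squaring via the discrete Cauchy--Schwarz inequality $(\sum_{k=0}^{N}a_k)^2 \le (N+1)\sum_k a_k^2$, together with $1+\lfloor\tau/2\rfloor\le 1+\alpha/2$, then yields
\begin{align*}
  |H_\tau(y)|^2
  \le
  \tau!\,(1+\alpha/2)
  \sum_{k=0}^{\lfloor\tau/2\rfloor}\frac{|y|^{2\tau-4k}}{(k!)^2\,((\tau-2k)!)^2\,4^k}
  .
\end{align*}

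Next, multiplying by $|y|^\eta$, integrating against $\rho$, and inserting the Gaussian absolute moment identity $\int_\R |y|^p\rho(y)\rd y = 2^{p/2}\Gamma((p+1)/2)/\sqrt\pi$ with $p = 2\tau-4k+\eta$, I would regroup the powers of~$2$ via $2^{\tau-2k+\eta/2}/4^k = 2^{\tau+\eta/2}/16^k$ to arrive at
\begin{align*}
  C_{\diamond,\alpha}
  \le
  \tau!\,(1+\alpha/2)\,\frac{2^{\tau+\eta/2}}{\sqrt\pi}
  \sum_{k=0}^{\lfloor\tau/2\rfloor}\frac{\Gamma(\tau-2k+(\eta+1)/2)}{(k!)^2\,((\tau-2k)!)^2\,16^k}
  .
\end{align*}
From here the stated bound follows by \textbf{(i)}~extracting a uniform upper bound $\Gamma(2\alpha)$ on each Gamma factor; \textbf{(ii)}~using $((\tau-2k)!)^2\ge 1$ and extending the summation to all $k \in \N_0$ so that $\sum_{k\ge 0}1/((k!)^2\,16^k) = I_0(1/2)$ appears; and \textbf{(iii)}~applying $\tau!\le\alpha!$ and $2^{\tau+\eta/2}\le 2^{2\alpha-1/2}$, which turns $2^{\tau+\eta/2}/\sqrt\pi$ into $2^{2\alpha}/\sqrt{2\pi}$.

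The main obstacle will be the uniform Gamma estimate $\Gamma(\tau-2k+(\eta+1)/2)\le \Gamma(2\alpha)$ of step~(i). The argument lies in $[1/2,2\alpha]$, yet $\Gamma$ is not monotonic on this interval: it has the hump $\Gamma(1/2)=\sqrt\pi$ at the left endpoint and a minimum near $x\approx 1.4616$. For $\alpha\ge 2$ the inequality is immediate since $\Gamma(2\alpha)\ge\Gamma(4)=6>\sqrt\pi$, so the maximum of $\Gamma$ over $[1/2,2\alpha]$ is attained at $x=2\alpha$. The case $\alpha=1$ has only the pairs $(\tau,k)\in\{(0,0),(1,0)\}$ with $\eta\in[0,1]$, and one checks directly that the slack in $2^{\tau+\eta/2}\le 2^{2\alpha-1/2}$ together with $I_0(1/2)\ge 1$ absorbs the residual loss of the factor $\sqrt\pi/\Gamma(2)$, preserving the stated form of the bound.
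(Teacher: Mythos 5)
Your proposal is correct and follows essentially the same route as the paper's proof: bound the Hermite sum termwise, apply the discrete Cauchy--Schwarz inequality to pick up the factor $1+\lfloor\tau/2\rfloor\le 1+\alpha/2$, insert the Gaussian absolute-moment formula, bound each Gamma factor uniformly by $\Gamma(2\alpha)$ (valid since the argument lies in $[1/2,2\alpha]$), and extend the $k$-sum to recognize $I_0(1/2)$. The only cosmetic difference is the treatment of $\alpha=1$, where the paper compares against the closed-form integrals for $\tau=0,1$ while you track the slack in the $2^{\tau+\eta/2}\le 2^{2\alpha-1/2}$ step; both verifications go through.
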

\begin{proof}
  Using~\eqref{eq:Hermite-pol}, the Cauchy--Schwarz inequality and $\int_\R |y|^\eta \rho(y) \rd y = 2^{\eta/2} \, \Gamma((\eta+1)/2) / \sqrt{\pi}$ for any $\eta \ge 0$, we have
	\begin{align*}
	&\int_\R |H_\tau(y)|^2 \, |y|^\eta \, \rho(y) \rd y
	=
	\int_\R
	\tau! 
	\left[
	\sum_{k=0}^{\lfloor \tau/2 \rfloor} \frac{(-1)^k}{k! \, (\tau-2k)!}\frac{y^{\tau-2k}}{2^k}
	\right]^2
	|y|^\eta
	\, \rho(y) \rd y
	\\
	&\qquad\le
	\tau! \,
	\left( \sum_{k=0}^{\lfloor \tau/2 \rfloor} (-1)^{2k} \right)
	\left(
	\sum_{k=0}^{\lfloor \tau/2 \rfloor} \frac{2^{-2k}}{(k! \, (\tau-2k)!)^2} 
	\int_\R
	|y|^{2(\tau-2k)+\eta}
	\, \rho(y) \rd y
	\right)
	\\
	&\qquad=
	\tau! \,
	(1 + \lfloor \tau/2 \rfloor)
	\sum_{k=0}^{\lfloor \tau/2 \rfloor} \frac{2^{-2k}}{(k! \, (\tau-2k)!)^2} 
	\frac1{\sqrt{\pi}} \,
	2^{\tau - 2k + \eta/2} \,
	\Gamma(\tau - 2k + (\eta+1)/2)
	\\
	&\qquad\le
	\alpha! \,
	(1 + \alpha/2) \,
	\frac{2^{2\alpha}}{\sqrt{2\pi}} \,
	\Gamma(2\alpha)
	\sum_{k=0}^{\infty} \frac{2^{-4k}}{(k!)^2}
	\\
	&\qquad=
	\alpha! \,
	(1 + \alpha/2) \,
	\frac{2^{2\alpha}}{\sqrt{2\pi}} \,
	\Gamma(2\alpha) \,
    I_0(1/2)
	,
	\end{align*}
where in the second inequality we use $\tau \le \alpha$, $\eta \le 2\alpha-1$,  $\tau - 2k \ge 0$, $1/2 \le \tau - 2k + (\eta+1)/2 \le 2\alpha$ for any $k \in \{0:\lfloor \tau/2 \rfloor\}$ and $\Gamma(1/2) \le \Gamma(2\alpha)$ for $\alpha \ge 1.43258$. For $\alpha = 1$ the result follows by directly comparing the stated upper bound with the closed form solution of the integral for $\tau=0$ (given above) and $\tau=1$ (which is the expression above with $\eta$ replaced by~$\eta+2$).
\end{proof} 

We are now ready to give the proof of \RefProp{prop:embedding-F-rho-T} which states that for $F \in H_{\alpha,0,\rho,s}(\R^s)$ with $\alpha \in \N$ and $T \ge 1/(2\sqrt{2})$, the function $(F \rho) \circ \bsT : [0,1]^s \to \R^s$ belongs to $H_{\alpha,s}([0,1]^s)$ and
  \begin{align}\label{eq:embedding:repeat}
    \|(F \rho) \circ \bsT\|_{H_{\alpha,s}([0,1]^s)}
    &\le 
    C_{1,\alpha}^s \, T^{(\alpha-1/2)s} \, \|F\|_{H_{\alpha,0,\rho,s}(\R^s)}
    ,
  \end{align}	
with $C_{1,\alpha}$ defined in~\eqref{eq:def:C1}.
The proof starts along the lines of \cite[proof of Lemma~4]{DILP18} but we need some extra work to arrive at the norm in the space $H_{\alpha,0,\rho,s}(\R^s)$ for which we will make use of \RefLem{lem:polynomial-hermite-bound}.
We remind the reader that $F(y_1,\ldots,y_s)$ is actually some relabelling of $F_\setu(\bsy_\setu)$ with $|\setu| = s$ coming from an anchored decomposition and that $H_{\alpha,0,\rho,s}(\R^s)$ is the anchored Gaussian Sobolev space specifically for such functions having the properties listed in \RefLem{lem:anchored-decomposition}.

\begin{proof}[Proof of \RefProp{prop:embedding-F-rho-T}]
Using the inner product~\eqref{eq:ip-u-unitcube-s}, the Cauchy--Schwarz inequality and the chain rule $((F\rho)\circ T)^{(\bstau)}(\bsy) = (2T)^{|\bstau|} (F \rho)^{(\bstau)}(\bsy)$ we have
\begin{align*}
    &\|(F \rho) \circ \bsT\|_{H_{\alpha,s}([0,1]^s)}^2
    =
    \sum_{\substack{\bstau \in \{0:\alpha\}^s \\ \setv := \{ j : \tau_j = \alpha \} }}
    \int_{[0,1]^{|\setv|}} 
    \left[
    \int_{[0,1]^{s-|\setv|}}
    ((F \rho) \circ \bsT)^{(\bstau)}(\bsy) \rd \bsy_{-\setv}
    \right]^2
    \rd \bsy_\setv
    \notag
    \\
    &\qquad\le
    \sum_{\substack{\bstau \in \{0:\alpha\}^s \\ \setv := \{ j : \tau_j = \alpha \} }}
    \int_{[0,1]^{|\setv|}} 
    \left(\int_{[0,1]^{s-|\setv|}} 1^2 \rd\bsy_{-\setv}\right)
    \left( \int_{[0,1]^{s-|\setv|}} \left[ ((F\rho)\circ \bsT)^{(\bstau)}(\bsy) \right]^2 \rd\bsy_{-\setv} \right)
    \rd\bsy_\setv
    \notag 
    \\
    &\qquad=
    \sum_{\bstau \in \{0:\alpha\}^s}
    \int_{[0,1]^s} 
    \left[
    ((F \rho) \circ \bsT)^{(\bstau)}(\bsy)
    \right]^2
     \rd \bsy
    \notag
    \\
    &\qquad=
    \frac1{(2T)^s}
    \sum_{\bstau \in \{0:\alpha\}^s}
    (2T)^{2|\bstau|}
    \int_{[-T,T]^s}
    \left[
    (F \rho)^{(\bstau)}(\bsy)
    \right]^2
    \rd \bsy 
    .
\end{align*}
Applying \RefLem{lem:derivative-F-rho} to the last inequality we then write
\begin{align*}
    &\|(F \rho) \circ \bsT\|_{H_{\alpha,s}([0,1]^s)}^2
    \notag
    \\
    &\qquad\le
    \frac1{(2T)^s}
    \sum_{\bstau \in \{0:\alpha\}^s}
    (2T)^{2|\bstau|}
    \int_{[-T,T]^s}
    \left[
    \rho(\bsy)
    \sum_{\bsomega \le \bstau}
    c(\bstau,\bsomega)
    \,
    |H_{\bstau-\bsomega}(\bsy)|
    \,
    | F^{(\bsomega)}(\bsy)|
    \right]^2
    \rd \bsy
    \notag
    \\
    &\qquad=
    \frac1{(2T)^s}
    \sum_{\bstau \in \{0:\alpha\}^s}
    (2T)^{2|\bstau|}
    \sum_{\bsomega \le \bstau} c(\bstau,\bsomega)
    \sum_{\bsomega' \le \bstau} c(\bstau,\bsomega')
    \notag
    \\
    &\qquad \qquad \qquad \qquad 
    \times
    \int_{[-T,T]^s}
    |H_{\bstau-\bsomega}(\bsy)|
    \;
    |H_{\bstau-\bsomega'}(\bsy)|
    \;
    |F^{(\bsomega)}(\bsy)|
    \;
    |F^{(\bsomega')}(\bsy)| 
    \;
    \rho^2(\bsy)
    \rd \bsy
    .
\end{align*}
Applying the Cauchy--Schwarz inequality to the last integral leads to
\begin{align}\label{eq:ap7}
    &\|(F \rho) \circ \bsT\|_{H_{\alpha,s}([0,1]^s)}^2
    \notag
    \\
    &\qquad\le
    \frac1{(2T)^s}
    \sum_{\bstau \in \{0:\alpha\}^s}
    (2T)^{2|\bstau|}
    \sum_{\bsomega \le \bstau} c(\bstau,\bsomega)
    \underbrace{
    \left[\int_{[-T,T]^s}
    |H_{\bstau-\bsomega}(\bsy)|^2
    \,
    |F^{(\bsomega)}(\bsy)|^2
    \,
    \rho^2(\bsy)
    \rd \bsy
    \right]^{1/2}}_{=: \calY(\bstau,\bsomega)}
    \notag
    \\
    &
    \qquad \qquad \qquad \qquad \qquad 
    \times
    \sum_{\bsomega' \le \bstau} 
    c(\bstau,\bsomega')
    \underbrace{\left[\int_{[-T,T]^s}
    |H_{\bstau-\bsomega'}(\bsy)|^2
    \,
    |F^{(\bsomega')}(\bsy)|^2
    \,
    \rho^2(\bsy)
    \rd \bsy
    \right]^{1/2}}_{=\calY(\bstau,\bsomega')}
    .
\end{align}
We will show below that for any $\bstau \in \{0:\alpha\}^s$ and $\bsomega \in \N_0^s$ such that $\bsomega \le \bstau$ we have the uniform bound
\begin{align}\label{eq:ap4}
    \calY(\bstau,\bsomega)
    :=
    \left[\int_{[-T,T]^s}
    |H_{\bstau-\bsomega}(\bsy)|^2
    \,
    |F^{(\bsomega)}(\bsy)|^2
    \,
    \rho^2(\bsy)
    \rd \bsy
    \right]^{1/2}
    \le
    C_{*,\alpha}^{s/2} \,
    \|F\|_{H_{\alpha,0,\rho,s}(\R^s)}
    ,
\end{align}
where $C_{*,\alpha} :=\alpha\, C_{\diamond,\alpha}$ with $C_{\diamond,\alpha}$ defined in~\eqref{eq:def:C-diamond}.
Inserting~\eqref{eq:ap4} into~\eqref{eq:ap7} then leads to
\begin{align}\label{eq:ap8}
    \|(F \rho) \circ \bsT\|_{H_{\alpha,s}([0,1]^s)}^2
	&\le
    C_{*,\alpha}^s \,
    \|F\|_{H_{\alpha,0,\rho,s}(\R^s)}^2 \,
    \frac1{(2T)^s}
    \sum_{\bstau \in \{0:\alpha\}^s}
    (2T)^{2|\bstau|}
    \left[
    \sum_{\bsomega \le \bstau} c(\bstau,\bsomega)
    \right]^2
    .
\end{align}
Moreover, we have, for $\bstau \in \{0:\alpha\}^s$ and $\bsomega \le \bstau$,
\begin{align*}
    \left[
    \sum_{\bsomega \le \bstau} c(\bstau,\bsomega)
    \right]^2
    &=
    \prod_{j=1}^s
    \left[
    \sum_{\omega_j=0}^{\tau_j}
    {\tau_j \choose \tau_j-\omega_j}
    \sqrt{(\tau_j-\omega_j)!}
    \right]^2
    \\
    &\le
    \prod_{j=1}^s
    \left[ \sqrt{\alpha!} \sum_{\omega_j =0} ^{\tau_j}
    {\tau_j \choose \tau_j-\omega} \right]^2
    =
    (\alpha!)^s
    \prod_{j=1}^s 
    2^{2\tau_j}
    .
\end{align*}
Inserting this into~\eqref{eq:ap8} implies
\begin{align*}
    \|(F \rho) \circ \bsT\|_{H_{\alpha,s}([0,1]^s)}^2
    &\le
    C_{*,\alpha}^s \,
    \|F\|_{H_{\alpha,0,\rho,s}(\R^s)}^2 \,
    (\alpha!)^s \,
    \frac1{(2T)^s}
    \sum_{\bstau \in \{0:\alpha\}^s}
    (2T)^{2|\bstau|} \, 2^{2|\bstau|}
    \\
    &=
    C_{*,\alpha}^s \,
    \|F\|_{H_{\alpha,0,\rho,s}(\R^s)}^2 \,
     (\alpha!)^s \,
    \frac1{(2T)^s}
    \left(
    \sum_{\tau=0}^\alpha
    (4T)^{2\tau}
    \right)^s
    \\
    &=
    C_{*,\alpha}^s \,
    \|F\|_{H_{\alpha,0,\rho,s}(\R^s)}^2 \,
    (\alpha!)^s \,
    \frac1{(2T)^s}
    \left(
      \frac{(4T)^{2\alpha+2}-1}{(4T)^2-1}
    \right)^s
    \\
    &\le
    C_{*,\alpha}^s \, (\alpha!)^s \,
    16^{\alpha s} \,
    T^{(2\alpha-1)s} \,
    \|F\|_{H_{\alpha,0,\rho,s}(\R^s)}^2
    \\
    &
    =
    C_{1,\alpha}^{2s} \, T^{(2\alpha-1)s} \, \|F\|_{H_{\alpha,0,\rho,s}(\R^s)}^2
    ,
\end{align*}
where we used $(4T)^{2\alpha+2} / ((4T)^2-1) \le 2 \, (4 T)^{2\alpha}$ for $T \ge 1/(2\sqrt{2}) > 1/4$, and where
\begin{align*}
    C_{1,\alpha}
    :=
    \left(
      C_{*,\alpha} \,
      \alpha! \,
      16^\alpha
    \right)^{1/2}
    &=
    \left(
      \alpha \,
      \alpha! \,
      (1+\alpha/2) \,
      \frac{2^{2\alpha}}{\sqrt{2\pi}} \,
      \Gamma(2\alpha) \,
      I_0(1/4) \,
      \alpha! \,
      2^{4\alpha}
    \right)^{1/2}
    \\
    &=
    \alpha! \,
    2^{3\alpha}
    \left(
      \alpha\,
      (1+\alpha/2) \,
      \frac{1}{\sqrt{2\pi}} \,
      \Gamma(2\alpha) \,
      I_0(1/4)
    \right)^{1/2}
    .
\end{align*}
We have now arrived at the claim of our statemement~\eqref{eq:embedding:repeat}.

To complete the proof we still need to show~\eqref{eq:ap4}.
We are going to use the Taylor representation from \RefLem{lem:Taylor} for the derivatives of $F = F_\setu$, where in our current exposition $\{1:s\}$ is a relabeling of~$\setu$.
Thus, using~\eqref{eq:Taylor-DFu}, and with the understanding that $F = F_\setu$, $\bsomega = \bsomega_\setu \in \{0:\alpha\}^{|\setu|}$ and $\bsy = \bsy_\setu \in \R^{|\setu|}$, we have, with some slight abuse of notation,
\begin{align*}
    F^{(\bsomega)}(\bsy)
    &=
    F_\setu^{(\bsomega_\setu)}(\bsy_\setu)
    \\
    &=
    \sum_{\substack{\bsomega_\setu \le \bsnu_\setu \in \{1:\alpha\}^{|\setu|} \\ \setw := \{ j : \omega_j = \alpha \} \\ \setv := \{ j \notin \setw : \nu_j = \alpha \} \\ \setz := \setu \setminus (\setv \cup \setw) }}
    \; \left[ \prod_{j \in \setz} \frac{y_j^{\nu_j - \omega_j}}{(\nu_j - \omega_j)!} \right] \;
    \int_{\bszero_\setv}^{\bsy_\setv}
    F_\setu^{(\bsnu_\setu)}(\bst_\setv, \bsy_\setw)
    \prod_{\substack{j \in \setv}} \frac{(y_j-t_j)^{\alpha-\omega_j-1}}{(\alpha-\omega_j-1)!}
    \rd \bst_\setv
    \\
    &=
    \sum_{\substack{\bsomega \le \bsnu \in \{1:\alpha\}^s \\ \setw := \{ j : \omega_j = \alpha \} \\ \setv := \{ j \notin \setw : \nu_j = \alpha \} \\ \setz := \{1:s\} \setminus (\setv \cup \setw) }}
    \; \left[ \prod_{j \in \setz} \frac{y_j^{\nu_j - \omega_j}}{(\nu_j - \omega_j)!} \right] \;
    \int_{\bszero_\setv}^{\bsy_\setv}
    F^{(\bsnu)}(\bst_\setv, \bsy_\setw)
    \prod_{\substack{j \in \setv}} \frac{(y_j-t_j)^{\alpha-\omega_j-1}}{(\alpha-\omega_j-1)!}
    \rd \bst_\setv
    .
\end{align*}
Note that inside the integral we evaluate the function $F = F_\setu$ at $(\bst_\setv, \bsy_\setw) = (\bst_\setv, \bsy_\setw, \bszero_\setz) \in \R^{|\setu|}$ with $\setz = \setu \setminus (\setv \cup \setw)$ and $\setv, \setw, \setz$ are pairwise disjoint with $\setu = \setv \cup \setw \cup \setz$ since the above definitions are equivalent to
\begin{align*}
  \setw &= \{ j \in \setu : \nu_j = \alpha \text{ and } \omega_j = \alpha \} , \\
  \setv &= \{ j \in \setu : \nu_j = \alpha \text{ and } \omega_j \ne \alpha \} , \\
  \setz &= \{ j \in \setu : \nu_j \ne \alpha \} ,
\end{align*}
where we used that $\omega_j \le \nu_j \le \alpha$ for the set~$\setw$.

In what follows, for any $y \in \R$ we write $[0,y]^* = [0,y]$ if $y \ge 0$ and $[0,y]^* = [y,0]$ if $y < 0$.
For $\bsy \in \R^s$ we write $[\bszero, \bsy]^* = [0, y_1]^* \times \cdots \times [0,y_s]^*$.
So, squaring the above expression for $F^{(\bsomega)}$ and applying the Cauchy--Schwarz inequality twice we obtain
\begin{align}
  \notag
  &|F^{(\bsomega)}(\bsy)|^2
  \\
  \notag
  &\le
    \alpha^s \hspace*{-4mm} 
    \sum_{\substack{\bsomega \le \bsnu \in \{1:\alpha\}^s \\ \setw := \{ j : \omega_j = \alpha \} \\ \setv := \{ j \notin \setw : \nu_j = \alpha \} \\ \setz := \{1:s\} \setminus (\setv \cup \setw) }}
    \hspace*{-1.4mm} 
    \left[ \prod_{j \in \setz} \frac{y_j^{2(\nu_j - \omega_j)}}{((\nu_j - \omega_j)!)^2} \right]
    \int_{[\bszero_\setv,\bsy_\setv]^*} \hspace*{-1.4mm} 
    |F^{(\bsnu)}(\bst_\setv, \bsy_\setw)|^2
    \rd \bst_\setv
    \;
    \prod_{\substack{j \in \setv}}
    \int_{[0,y_j]^*} \hspace*{-1.4mm} 
    \frac{(y_j-t_j)^{2(\alpha-\omega_j-1)}}{((\alpha-\omega_j-1)!)^2}
    \rd t_j
  \\
  \notag
  &=
    \alpha^s \hspace*{-4mm} 
    \sum_{\substack{\bsomega \le \bsnu \in \{1:\alpha\}^s \\ \setw := \{ j : \omega_j = \alpha \} \\ \setv := \{ j \notin \setw : \nu_j = \alpha \} \\ \setz := \{1:s\} \setminus (\setv \cup \setw) }}
    \hspace*{-1.4mm} 
    \left[ \prod_{j \in \setz} \frac{|y_j|^{2\nu_j - 2\omega_j}}{((\nu_j - \omega_j)!)^2} \right]
    \int_{[\bszero_\setv,\bsy_\setv]^*} \hspace*{-1.4mm} 
    |F^{(\bsnu)}(\bst_\setv, \bsy_\setw)|^2
    \rd \bst_\setv
    \;
    \prod_{\substack{j \in \setv}}
    \frac{|y_j|^{2\alpha-2\omega_j-1}}{(2\alpha-2\omega_j-1)((\alpha-\omega_j-1)!)^2}
  \\
  \notag
  &\le
    \alpha^s \hspace*{-4mm} 
    \sum_{\substack{\bsomega \le \bsnu \in \{1:\alpha\}^s \\ \setw := \{ j : \omega_j = \alpha \} \\ \setv := \{ j \notin \setw : \nu_j = \alpha \} \\ \setz := \{1:s\} \setminus (\setv \cup \setw) }}
    \hspace*{-1.4mm} 
    \left[ \prod_{j \in \setz} |y_j|^{2\nu_j - 2\omega_j} \right]
    \int_{[\bszero_\setv,\bsy_\setv]^*} \hspace*{-1.4mm} 
    |F^{(\bsnu)}(\bst_\setv, \bsy_\setw)|^2
    \rd \bst_\setv
    \;
    \prod_{\substack{j \in \setv}}
    |y_j|^{2\alpha-2\omega_j-1}
  \\
  \label{eq:apB}
  &\le
    \alpha^s \hspace*{-4mm} 
    \sum_{\substack{\bsomega \le \bsnu \in \{1:\alpha\}^s \\ \setw := \{ j : \omega_j = \alpha \} \\ \setv := \{ j \notin \setw : \nu_j = \alpha \} \\ \setz := \{1:s\} \setminus (\setv \cup \setw) }}
    \underbrace{
    \left[ \prod_{j \in \setz \cup \setv} |y_j|^{2\alpha - 1} \right]
    \int_{[\bszero_\setv,\bsy_\setv]^*} \hspace*{-1.4mm} 
    |F^{(\bsnu)}(\bst_\setv, \bsy_\setw)|^2
    \rd \bst_\setv
    }_{=: B(\bsnu,\bsomega,\bsy)}
  .
\end{align}
Note that in the second and third step we used that for $j \in \setv$ we know that $\omega_j \le \alpha - 1$, and in the last step we used that for $j \in \setz$ we have $\nu_j \le \alpha - 1$.
We want to use this bound for $|F^{(\bsomega)}(\bsy)|^2$ in $\calY^2(\bstau,\bsomega)$, cf.~\eqref{eq:ap4}, which is multiplying with $|H_{\bstau-\bsomega}(\bsy)|^2 \, \rho^2(\bsy)$ and taking the integral over $[-T,T]^s$.
We can move the integral inside of the sum over $\bsnu$ in the above expression to obtain
\begin{align}\label{eq:apY}
    \calY^2(\bstau,\bsomega)
    &\le
    \alpha^s
    \sum_{\substack{\bsomega \le \bsnu \in \{1:\alpha\}^s \\ \setw := \{ j : \omega_j = \alpha \} \\ \setv := \{ j \notin \setw : \nu_j = \alpha \} \\ \setz := \{1:s\} \setminus (\setv \cup \setw) }}
    \underbrace{
    \int_{[-T,T]^s} 
    |H_{\bstau-\bsomega}(\bsy)|^2 \, B(\bsnu,\bsomega,\bsy)
    \, \rho^2(\bsy)
    \rd\bsy
    }_{=: A(\bsnu,\bsomega,\bstau)}
    ,
\end{align}
with $B(\bsnu,\bsomega,\bsy)$ defined in~\eqref{eq:apB}.
Then
\begin{align*}
  &A(\bsnu,\bsomega,\bstau)
  \\
  &\quad=
  \int_{[-T,T]^s}
    |H_{\bstau-\bsomega}(\bsy)|^2 \,
    \left[ \prod_{j \in \setz \cup \setv} |y_j|^{2\alpha - 1} \right]
    \;
    \int_{[\bszero_\setv,\bsy_\setv]^*} 
    |F^{(\bsnu)}(\bst_\setv, \bsy_\setw)|^2
    \rd \bst_\setv
    \; \rho^2(\bsy)
    \rd\bsy
  \\
  &\quad\le
  \int_{[-T,T]^s}
    |H_{\bstau-\bsomega}(\bsy)|^2 \,
    \left[ \prod_{j \in \setz \cup \setv} |y_j|^{2\alpha - 1} \right]
    \;
    \frac{1}{\rho_\setv(\bsy_\setv)}
    \int_{[\bszero_\setv,\bsy_\setv]^*} 
    |F^{(\bsnu)}(\bst_\setv, \bsy_\setw)|^2
    \, \rho_\setv(\bst_\setv)
    \rd \bst_\setv
    \; \rho^2(\bsy)
    \rd\bsy
  \\
  &\quad\le
  \int_{\R^s}
    |H_{\bstau-\bsomega}(\bsy)|^2 \,
    \left[ \prod_{j \in \setz \cup \setv} |y_j|^{2\alpha - 1} \right]
    \;
    \frac{1}{\rho_\setv(\bsy_\setv)}
    \int_{\R^{|\setv|}}
    |F^{(\bsnu)}(\bst_\setv, \bsy_\setw)|^2
    \, \rho_\setv(\bst_\setv)
    \rd \bst_\setv
    \; \rho^2(\bsy)
    \rd\bsy
  .
\end{align*}
We now split the integral over $\bsy$ into a product of three integrals over the pairwise disjoint sets $\setv \cup \setw \cup \setz = \{1:s\}$.
We obtain
\begin{align}\notag
    &A(\bsnu,\bsomega,\bstau)
    \\
    &\qquad=
    \left[
    \prod_{j \in \setz}
    \int_\R
    |H_{\tau_j-\omega_j}(y_j)|^2 \,
    |y_j|^{2\alpha-1}
    \smash[b]{\underbrace{\rho^2(y_j)}_{\le \rho(y_j)}}
    \rd y_j
    \right]
    \times
    \left[
    \prod_{j \in \setv}
    \int_\R
    |H_{\tau_j-\omega_j}(y_j)|^2 \,
    |y_j|^{2\alpha-1}
    \, \rho(y_j)
    \rd y_j
    \right]
    \notag
    \\
    &
    \qquad\qquad \times \left[
    \int_{\R^{|\setw|}}
    \underbrace{|H_{\bstau_\setw-\bsomega_\setw}(\bsy_\setw)|^2 \, \rho_\setw(\bsy_\setw)}_{\le 1}
    \;
    \int_{\R^{|\setv|}}
    |F^{(\bsnu)}(\bst_\setv, \bsy_\setw)|^2
    \, \rho_\setv(\bst_\setv)
    \rd \bst_\setv
    \;
    \rho_\setw(\bsy_\setw)
    \rd \bsy_\setw
    \right]
    \notag
    \\
    &\qquad\le
    C_{\diamond,\alpha}^{|\setz|} \, C_{\diamond,\alpha}^{|\setv|} \,
    \int_{\R^{|\setv\cup\setw|}}
    |F^{(\bsnu)}(\bsy_{\setv\cup\setw})|^2
    \, \rho_{\setv\cup\setw}(\bsy_{\setv\cup\setw})
    \rd \bsy_{\setv\cup\setw}
    \label{eq:apA}
    ,
\end{align}
where we used \RefLem{lem:polynomial-hermite-bound} and the constant $C_{\diamond,\alpha}$ defined there in~\eqref{eq:def:C-diamond}, and $H_{\bstau}(\bsy)\sqrt{\rho(\bsy)} \le 1$ for any $\bstau \in \N_0^s$ and any $\bsy \in \R^s$, see~\cite[Lemma~1]{DILP18}. 
Inserting~\eqref{eq:apA} into~\eqref{eq:apY} leads to 
\begin{align*}
 \calY^2(\bstau,\bsomega)
 &\le
     \alpha^s
    \sum_{\substack{\bsomega \le \bsnu \in \{1:\alpha\}^s \\ \setw := \{ j : \omega_j = \alpha \} \\ \setv := \{ j \notin \setw : \nu_j = \alpha \} \\ \setz := \{1:s\} \setminus (\setv \cup \setw) }}
    C_{\diamond,\alpha}^{|\setz|} \, C_{\diamond,\alpha}^{|\setv|} \,
    \int_{\R^{|\setv\cup\setw|}}
    |F^{(\bsnu)}(\bsy_{\setv\cup\setw})|^2
    \, \rho_{\setv\cup\setw}(\bsy_{\setv\cup\setw})
    \rd \bsy_{\setv\cup\setw}
 \\
 &\le
     \alpha^s C_{\diamond,\alpha}^s
    \sum_{\substack{\bsomega \le \bsnu \in \{1:\alpha\}^s \\ \setv := \{ j : \nu_j = \alpha \} }}
    \int_{\R^{|\setv|}}
    |F^{(\bsnu)}(\bsy_\setv)|^2
    \, \rho_\setv(\bsy_\setv)
    \rd \bsy_\setv
 \\
 &\le
     \alpha^s C_{\diamond,\alpha}^s
    \sum_{\substack{\bsnu \in \{1:\alpha\}^s \\ \setv := \{ j : \nu_j = \alpha \} }}
    \int_{\R^{|\setv|}}
    |F^{(\bsnu)}(\bsy_\setv)|^2
    \, \rho_\setv(\bsy_\setv)
    \rd \bsy_\setv
 =
 \alpha^s C_{\diamond,\alpha}^s \, \|F\|^2_{H_{\alpha,0,\rho,s}(\R^s)}
 ,
\end{align*}
where in the last line we obtain the norm based on the inner product~\eqref{eq:ip-a-Rs} of the space $H_{\alpha,0,\rho,s}(\R^s)$.
This shows~\eqref{eq:ap4}. The proof is now complete.
\end{proof}

\subsection{Interlaced polynomial lattice rules for $H_{\alpha,s}([0,1]^s)$}\label{app:IPLR}

The aim of this section is to show that interlaced polynomial lattice rules can achieve the almost optimal order of convergence for integration in the space $H_{\alpha,s}([0,1]^s)$ defined in \RefSec{sec:uSob-unitcube}.
Interlaced polynomial lattice rules were also used in the setting of PDEs with random diffusion coefficient, but for the uniform case, i.e., with integrals directly expressible over the unit cube, in \cite{DKLNS14}.
Here we map our integrals over the full space into the unit cube by the strategy described in \RefSec{sec:HOQMC}. But the unanchored Sobolev space here is different from the unanchored Sobolev space considered in \cite{DKLNS14}.
We adjust the analysis of \cite{DKLNS14} and \cite{God15} to show that the fast component-by-component construction algorithm as in \cite{DKLNS14} can also construct optimal interlaced polynomial lattice rules for our space $H_{\alpha,s}([0,1]^s)$.
We remind the reader that the inner product of our space was already given in~\eqref{eq:ip-u-unitcube} and~\eqref{eq:ip-u-unitcube-s}.
As explained in the introduction, we do not consider weighted function spaces, since the MDM already takes care to limit the number of dimensions for each subproblem.

\emph{Interlaced polynomial lattice rules} are a modification of \emph{polynomial lattice rules} to achieve higher-order convergence for integration over the unit cube in classes of \emph{Walsh spaces} and \emph{weighted unanchored Sobolev spaces}, see, e.g., \cite{DKLNS14,God15}.
The aim is to approximate multivariate integrals over the $s$-dimensional unit cube
\begin{align*}
  I_{[0,1]^s} (F)
  &:= 
  \int_{[0,1]^s} F(\bsy) \rd \bsy
\end{align*}
by a quasi-Monte Carlo rule of the form
\begin{align*}
  Q_{[0,1]^s,P_n}(F)
  &:=
  \frac1{n} \sum_{i=0}^{n-1} F(\bsy^{(i)})
  ,
\end{align*}
where $P_n:=\{\bsy^{(i)}\}_{i=0}^{n-1}$ is the cubature point set.
The \emph{worst-case error} of the QMC rule $Q_{[0,1]^s,P_n}$ in the normed space $H_{\alpha,s}([0,1]^s)$ is defined by
\begin{align*}
  e_\wor(P_n; H_{\alpha,s}([0,1]^s))
  :=
  \sup_{\|F\|_{H_{\alpha,s}([0,1]^s)} \le 1}
  \left|
  I_{[0,1]^s}(F)
  -
  Q_{[0,1]^s,P_n}(F) 
  \right|
  .
\end{align*}
Hence, for any $F \in H_{\alpha,s}([0,1]^s)$
\begin{align*}
	\left| I_{[0,1]^s}(F) - Q_{[0,1]^s,P_n}(F) \right|
	&\le 
	e_\wor(P_n; H_{\alpha,s}([0,1]^s))
	\,
	\|F\|_{H_{\alpha,s}([0,1]^s)}
	.
\end{align*}

We need to introduce some necessary definitions.
For simplicity we restrict ourselves to polynomial lattice rules over the finite field $\Z_2$.
Let $\Z_2[\x]$ denote the set of all polynomials over $\Z_2$ and $\Z_2[\x^{-1}]$ denote the set of all formal Laurent series over $\Z_2$.
For any $m \in \N$ let us define a mapping $\vartheta_m: \Z_2[\x^{-1}] \to [0,1)$ by
\begin{align*}
  \vartheta_m\Big( \sum_{i=\ell}^\infty w_i \, \x^{-i} \big)
  &:=
  \sum_{i=\max(1,\ell)}^m w_i \, 2^{-i}
  .
\end{align*} 
In the following we will identify any integer $k \in \{ 0,\ldots,2^m-1 \}$, having binary expansion $k = \kappa_0 + \kappa_1 2 + \cdots +\kappa_{m-1} 2^{m-1}$, with the polynomial $k(\x) = \kappa_0 + \kappa_1 \x + \cdots + \kappa_{m-1} \x^{m-1} \in \Z_2[\x]$ and vice versa.

\begin{definition}[polynomial lattice rule]
	For $m, s \in \N$ let $p \in \Z_2[\x]$ be an irreducible polynomial such that $\deg(p)=m$ and let $\bsq=(q_1, \ldots, q_s) \in \mathscr{G}_m^s$ with 
	\begin{align*}
	  \mathscr{G}_m := \{ q \in \Z_2[\x]: \deg(q) < m \}
	  .
	\end{align*}
	A \emph{polynomial lattice point set} $P_{p,m, s}(\bsq)$ is a set of $n = 2^m$ points $\bsy^{(0)}, \ldots, \bsy^{(2^m-1)} \in [0,1)^s$ where
	\begin{align*}
	  \bsy^{(k)}
	  = 
	  \left(
	  \vartheta_m\Big(\frac{k(\x) q_1(\x)}{p(\x)}\Big), \ldots, \vartheta_m\Big(\frac{k(\x) q_s(\x)}{p(\x)}\Big)
	  \right)
	  .
	\end{align*}
	A QMC rule using this point set is called a \emph{polynomial lattice rule} with generating vector $\bsq$ and modulus $p$.
\end{definition}

The convergence for a polynomial lattice rule is typically that of a normal QMC rule, i.e., $O(n^{-1+\delta})$, $\delta > 0$, under appropriate conditions and modulo $\log$-factors.
By making use of \emph{interlacing} we can obtain higher order convergence $O(n^{-\alpha+\delta})$, $\delta > 0$, for $\alpha > 1$, again, under appropriate conditions and modulo $\log$-factors.
Interlacing is the process of combining $\alpha \in \N$, $\alpha \ge 2$, base-$b$ elements into one. I.e., we can interlace a tuple of $\alpha$ base-$2$ numbers in the interval $[0,1)$ and combine them into one base-$2$ number in the interval $[0,1)$.
This is the aim of the \emph{digit interlacing function} which will take a polynomial lattice point set in $\alpha s$ dimensions and interlace the points with a factor $\alpha$ to obtain an interlaced polynomial lattice point set in $s$ dimensions.

\begin{definition}[interlaced polynomial lattice rule]\label{def:IPLR}
	Define the \emph{digit interlacing function} $\mathscr{D}_\alpha: [0,1)^\alpha \to [0,1)$ with interlacing factor $\alpha \in \N$ by 
	\begin{align*}
	\mathscr{D}_\alpha(y_1, \ldots, y_\alpha)
	:=
	\sum_{i=1}^{\infty} \sum_{j=1}^\alpha \frac{\xi_{i,j}}{2^{\alpha(i-1)+j}}
	,
	\end{align*}
	where $y_j = \xi_{1,j} 2^{-1}+ \xi_{2,j} 2^{-2}+ \cdots$ for $j = 1,\ldots, \alpha$ and, in case the number of arguments is a multiple of $\alpha$, define $\mathscr{D}_\alpha : [0,1)^{\alpha s} \to [0,1)^s$ by
	\begin{align*}
	\mathscr{D}_\alpha(y_1, \ldots, y_{\alpha s}):= (\mathscr{D}_\alpha(y_1, \ldots, y_\alpha), \ldots, \mathscr{D}_\alpha(y_{(s-1)\alpha+1}, \ldots, y_{s\alpha}))
	.
	\end{align*} 
	For $m, s\in \N$ 
	let $p \in \Z_2[\x]$ be an irreducible polynomial such that $\deg(p)=m$ and let $\bsq= (q_1,\ldots,q_{\alpha s})\in \mathscr{G}_m^{\alpha s}$. An \emph{interlaced polynomial lattice point set} (of order $\alpha$) $\mathscr{D}_\alpha(P_{p,m,\alpha s}(\bsq))$ is a set of $n = 2^m$  points $\bsy^{(0)}, \ldots, \bsy^{(2^m-1)} \in [0,1)^s$ such that 
	\begin{align*}
	\bsy^{(k)} = \mathscr{D}_\alpha(\bsx^{(k)})
	,
	\end{align*}
	where $\{\bsx^{(k)}\}_{k=0}^{2^m-1} \in [0,1)^{\alpha s}$ are the points of a polynomial lattice point set $P_{p,m, \alpha s}(\bsq)$.
	A QMC rule using this point set is called an \emph{interlaced polynomial lattice rule} (of order $\alpha$) with generating vector $\bsq$ and modulus~$p$.
\end{definition}	

To analyse the error we will make use of the dual of the point set.
First we need to define vectors which have a specified support.
Therefore, define for an integer vector $\bsk$ the function $\supp(\bsk) := \{ j : k_j \ne 0 \}$ where the index $j$ ranges over the dimensions of $\bsk$.
To range over all $s$-dimensional vectors with support on the set $\setu$ we write $\bsk_\setu \in \N^s_\setu$.

\begin{definition}[dual of polynomial lattice point set]
	\label{def:dual-PLR}
	Given $k \in \N_0$ with binary expansion $k = \kappa_0 + \kappa_1 2 + \cdots+ \kappa_{a-1} 2^{a-1}$ define the associated truncated polynomial
	\begin{align*}
	  (\tr_m(k))(\x)
	  &:=
	  \kappa_0+ k_1 \x+ \cdots + \kappa_{m-1} \x^{m-1}
	\end{align*}
	where $\kappa_a = \cdots = \kappa_{m-1} =0$ if $a < m$.
	For $\bsk \in \N^s_0$ define $\tr_m(\bsk):= (\tr_m(k_1), \ldots, \tr_m(k_s))$. 
	The \emph{dual} of the polynomial lattice point set $P$ with modulus $p$ and $\deg(p)=m$ and generating vector $\bsq \in \mathscr{G}_m^s$ is defined by
	\begin{align*}
	  P^\perp
	  &:=
	  \big\{ \bsk \in \N_0^s: \tr_m(\bsk) \cdot \bsq \equiv 0 \pmod{p} \big\}
	  \subseteq
	  \N_0^s
	  ,
	\end{align*}
	and the \emph{dual with support $\setu$} by
	\begin{align*}
	  P^\perp_\setu
	  &:=
	  \big\{ \bsk_\setu \in \N^s_\setu \subset \N_0^s : \tr_m(\bsk_\setu) \cdot \bsq_\setu \equiv 0 \pmod{p} \big\}
	  \subset
	  P^\perp
	  .
	\end{align*}
\end{definition}

Since interlacing reduces $\alpha$ dimensions to a single dimension, the definition of the dual with support $\setu$ for an interlaced polynomial lattice point set needs to keep track of its source dimensions.
Therefore, for $\setv \subseteq \{1:\alpha s\}$, we define, see also \cite[Equation~(3.26)]{DKLNS14},
\begin{align*}
  \setu_\alpha(\setv)
  &:=
  \big\{ \lceil j / \alpha \rceil : j \in \setv \big\} \subseteq \{1:s\}
  ,
\end{align*}
which tells us where the source dimensions end up in the interlaced point set.

\begin{definition}[dual of interlaced polynomial lattice point set]
	\label{def:dual-IPLR}
	Define the \emph{digit interlacing function for non-negative integers} $\mathscr{E}_\alpha: \N_0^\alpha \to \N_0$ with interlacing factor $\alpha \in \N$ by
	\begin{align*}
	  \mathscr{E}_\alpha(k_1, \ldots, k_\alpha) 
	  &:= 
	  \sum_{i=0}^{\infty} \sum_{j=1}^\alpha
	  \kappa_{i,j} \, 2^{i \alpha+j-1}
	  ,
	\end{align*}
	where $k_j = \kappa_{0,j} + \kappa_{1,j} 2 +  \kappa_{2,j} 2^2+\cdots$ for $j = 1,\ldots, \alpha$ and, in case the number of arguments is a multiple of $\alpha$, define $\mathscr{E}_\alpha: \N_0^{\alpha s} \to \N_0^s$ by
	\begin{align*}
	  \mathscr{E}_\alpha(k_1, \ldots, k_{\alpha s})
	  &:=
	  (\mathscr{E}_\alpha(k_1, \ldots, k_\alpha), \ldots, \mathscr{E}_\alpha(k_{(s-1)\alpha+1}, \ldots, k_{s\alpha}))
	  .
	\end{align*}
	The \emph{dual} of the interlaced polynomial lattice point set $\mathscr{D}_\alpha(P_{p,m,\alpha s}(\bsq))$ is defined by
	\begin{align*}
	  (\mathscr{D}_\alpha(P_{p,m,\alpha s}(\bsq)))^\perp
	  &:=
	  \big\{
	    \mathscr{E}_\alpha(\bsk) \in \N_0^s : \bsk = (k_1,\ldots, k_{\alpha s}) \in (P_{p,m,\alpha s}(\bsq))^\perp \subseteq \N_0^{\alpha s}
	  \big\}
	  \subseteq
	  \N_0^s
	  ,
	\end{align*}
	where $(P_{p,m,\alpha s}(\bsq))^\perp$ is the dual of $P_{p,m,\alpha s}(\bsq)$ as given in \RefDef{def:dual-PLR}.
	The \emph{dual with support $\setu$} is defined by
	\begin{multline*}
	  (\mathscr{D}_\alpha(P_{p,m,\alpha s}(\bsq)))^\perp_\setu
	  :=
	  \Big\{
	    \bsh_\setu \in \N^s_\setu \subset \N_0^s
	    :
	    \bsh_\setu = \mathscr{E}_\alpha(\bsk)
	    \\ \text{ for which }
	    \bsk = (k_1,\ldots, k_{\alpha s}) \in 
	    \bigcup_{\substack{\setv \subseteq \{1:\alpha s\} \\ \text{s.t.\ } \setu_\alpha(\setv) = \setu}} (P_{p,m,\alpha s}(\bsq))^\perp_\setv
	  \Big\}
      \subset
      (\mathscr{D}_\alpha(P_{p,m,\alpha s}(\bsq)))^\perp
	  .
	\end{multline*}
\end{definition}

In order to state a bound on the worst-case error we still need to introduce a weight function which measures the importance of the $k$th Walsh basis functions and which will provide a link to the space $H_{\alpha,s}([0,1]^s)$.
For $k \in \N$ with binary expansion $k = \kappa_1 2^{m_1-1} + \kappa_2 2^{m_2-1}+ \cdots +\kappa_{v} 2^{m_v-1}$ such that $m_1 > m_2 > \cdots > m_v > 0$ define 
\begin{align*}
	\mu_\alpha(k)
	:=
	\sum_{i =1}^{\min(\alpha, v)} m_i
	,
\end{align*}  
and $\mu_\alpha(0)=0$. For $\bsk \in \N_0^s$ we define $\mu_\alpha(\bsk) := \sum_{j=1}^s \mu_\alpha(k_j)$.
We can now state a first bound on the worst-case error for interlaced polynomial lattice rules in the space $H_{\alpha,s}([0,1]^s)$.

\begin{proposition}\label{prop:wce-uSob-unit-cube}
	For any $\alpha \in \N$, with $\alpha \ge 2$, we have
\begin{align*}
  e_\wor(\mathscr{D}_\alpha(P_{p,m,\alpha s}(\bsq)); H_{\alpha,s}([0,1]^s))
  &\le
  \sum_{\emptyset \ne \setv \subseteq \{1:\alpha s\}}
  (2^{\alpha (\alpha-1)} \widehat{C}_\alpha)^{|\setu_\alpha(\setv)|/2}
  \sum_{\bsk_\setv \in (P_{p,m,\alpha s}(\bsq))^\perp_\setv}
  2^{-\alpha \mu_1(\bsk_\setv)}
  ,
\end{align*}
	with
	\begin{align}\label{eq:def:Chat-alpha}
		\widehat{C}_\alpha
		:= 
		\max_{1 \le \nu \le \alpha}
		\left\{ \sum_{\tau = \nu}^\alpha\frac{C_\tau^2}{2^{2(\tau - \nu)}} + \frac{2 \, C_{2\alpha}}{2^{2(\alpha-\nu)}}  \right\} 
		,
	\end{align}
	where $C_1 := 2^{-1}$ and $C_\tau := (5/3)^{\tau-2} \, 2^{-\tau}$ for $\tau \ge 2$.	
\end{proposition}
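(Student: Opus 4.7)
The plan is to bound the worst-case error by a Walsh series expansion of the integrand together with the character/duality property of the (interlaced) polynomial lattice rule, adapting the argument from~\cite{DKLNS14,God15} to our unanchored Sobolev space $H_{\alpha,s}([0,1]^s)$.

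First, for any $F \in H_{\alpha,s}([0,1]^s)$ I would expand $F(\bsy) = \sum_{\bsh \in \N_0^s} \widehat{F}(\bsh)\,\wal_\bsh(\bsy)$ in the Walsh basis. Since $I_{[0,1]^s}(\wal_\bsh) = \delta_{\bsh,\bszero}$ and $Q_{[0,1]^s,P_n}(\wal_\bsh) = 1$ exactly when $\bsh$ lies in the dual of the point set and $0$ otherwise, the cubature error collapses to
\[
|I_{[0,1]^s}(F) - Q_{[0,1]^s,\mathscr{D}_\alpha(P_{p,m,\alpha s}(\bsq))}(F)| \le \sum_{\bszero \ne \bsh \in (\mathscr{D}_\alpha(P_{p,m,\alpha s}(\bsq)))^\perp} |\widehat{F}(\bsh)|,
\]
so taking the supremum over $\|F\|_{H_{\alpha,s}([0,1]^s)} \le 1$ reduces the problem to a uniform control of $|\widehat{F}(\bsh)|$ in terms of the Sobolev norm, followed by an index rearrangement on the dual.

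Second, I would establish the per-coordinate Walsh coefficient estimate: for $\bsh \in \N_0^s$ with support $\setu = \supp(\bsh) \subseteq \{1:s\}$,
\[
|\widehat{F}(\bsh)| \le (2^{\alpha(\alpha-1)}\widehat{C}_\alpha)^{|\setu|/2}\, 2^{-\alpha \mu_\alpha(\bsh)}\,\|F\|_{H_{\alpha,s}([0,1]^s)},
\]
obtained by integrating by parts $\alpha$ times against $\wal_{h_j}$ in each active coordinate, handling the $\tau = 0$ boundary contribution from~\eqref{eq:ip-u-unitcube} via Parseval applied to a Walsh expansion of the Taylor-type Bernoulli kernel, and then bounding the resulting Walsh--Fourier coefficients of $(y-t)_+^{\alpha-1}/(\alpha-1)!$ using the constants $C_\tau$; the maximum over $\nu \in \{1{:}\alpha\}$ in the definition~\eqref{eq:def:Chat-alpha} of $\widehat{C}_\alpha$ arises because at each coordinate one must choose an intermediate smoothness level $\nu$ at which to deploy Cauchy--Schwarz/Parseval so as to separate Walsh coefficients from the Sobolev norm.

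Third, to convert the sum into the stated form I would unfold the interlaced dual via \RefDef{def:dual-IPLR}: every non-zero $\bsh \in (\mathscr{D}_\alpha(P_{p,m,\alpha s}(\bsq)))^\perp$ is uniquely of the form $\bsh = \mathscr{E}_\alpha(\bsk)$ for some $\bsk \in (P_{p,m,\alpha s}(\bsq))^\perp$, and the support $\setu \subseteq \{1:s\}$ of $\bsh$ equals $\setu_\alpha(\setv)$ where $\setv \subseteq \{1:\alpha s\}$ is the support of $\bsk$. A direct bit-counting computation shows that the decay $2^{-\alpha\mu_\alpha(\bsh)}$ in each interlaced coordinate becomes $2^{-\alpha\mu_1(\bsk)}$ in the $\alpha$-tuple of pre-interlaced coordinates mapping to it. Grouping the $\bsh$-sum by $\setv$ through this bijection and inserting the Walsh coefficient bound yields exactly the claimed inequality.

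The main obstacle is the Walsh coefficient estimate in the unanchored setting: the double-sum structure of~\eqref{eq:ip-u-unitcube-s}, which ranges over both the support $\setv$ of the maximal-order derivatives and a multi-index of lower-order derivatives, must be paired correctly with a decomposition of $\wal_\bsh$ into Bernoulli-like building blocks so that Parseval applies coordinate-by-coordinate. In the anchored case the $\tau = 0$ terms simply vanish; here they must be dealt with by an additional $L^2([0,1])$ projection step, and one must verify that the resulting per-coordinate constant does not exceed $\widehat{C}_\alpha^{1/2}$ with the overhead $2^{\alpha(\alpha-1)/2}$ coming from counting how many Walsh coefficients at a fixed $\mu_1$-value have to be estimated simultaneously.
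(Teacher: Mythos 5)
Your overall route---Walsh expansion, the character property to collapse the error onto the dual, a per-coefficient decay estimate, and then the interlacing inequality to pass from $\mu_\alpha$ on $\N_0^s$ to $\alpha\mu_1$ on $\N_0^{\alpha s}$---is the same family of argument as the paper's, which works with the double Walsh series of the reproducing kernel and the identity $(e_\wor)^2 = -1 + n^{-2}\sum_{i,i'}K_{\alpha,s}(\bsy^{(i)},\bsy^{(i')})$ rather than with $\widehat{F}$ directly; the two formulations are equivalent since $|\widehat{F}(\bsh)| \le \|F\|_{H_{\alpha,s}}\,(\widehat{K}_{\alpha,s}(\bsh,\bsh))^{1/2}$. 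However, two points in your write-up do not survive scrutiny as stated. First, your intermediate bound $|\widehat{F}(\bsh)| \le (2^{\alpha(\alpha-1)}\widehat{C}_\alpha)^{|\setu|/2}\,2^{-\alpha\mu_\alpha(\bsh)}\,\|F\|$ is false: for $F \in H_{\alpha,s}([0,1]^s)$ the available decay is $2^{-\mu_\alpha(\bsh)}$, not $2^{-\alpha\mu_\alpha(\bsh)}$, and the factor $2^{\alpha(\alpha-1)|\setu|/2}$ does not belong to the coefficient estimate at all. It enters only at the last step, through the inequality $\mu_\alpha(\mathscr{E}_\alpha(\bsk_\setv)) \ge \alpha\,\mu_1(\bsk_\setv) - \tfrac{\alpha(\alpha-1)}{2}|\setu_\alpha(\setv)|$ of \cite[Lemma~3.8]{DKLNS14}; your closing speculation that this overhead comes from ``counting how many Walsh coefficients at a fixed $\mu_1$-value have to be estimated'' misattributes its origin.

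Second, the genuinely hard ingredient---the per-coordinate estimate with the explicit constant $\widehat{C}_\alpha$ of~\eqref{eq:def:Chat-alpha}---is only gestured at. The paper does not re-derive it by integration by parts against $\wal_{h_j}$; it writes the reproducing kernel of $H_{\alpha,s}([0,1]^s)$ in terms of Bernoulli polynomials and imports from Baldeaux--Dick \cite{BD09} both the vanishing $\widehat{K}_{\alpha,s}(\bsk,\bsell)=0$ when $\supp(\bsk)\ne\supp(\bsell)$ and the bound $|\widehat{K}_{\alpha,s}(\bsk_\setu,\bsell_\setu)| \le \widehat{C}_\alpha^{|\setu|}\,2^{-\mu_\alpha(\bsk_\setu)-\mu_\alpha(\bsell_\setu)}$, from which the stated inequality follows after the character property, subadditivity of the square root, and the interlacing lemma. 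Your plan to establish the coefficient decay from scratch, including the provenance of the constants $C_\tau$ and the maximum over $\nu$, is exactly the content of those cited results and is left unproved in your proposal; as written, the proof is incomplete at its central step even though the architecture is correct.
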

\begin{proof}
The reproducing kernel for the space $H_{\alpha,s}([0,1]^s)$ is well known and can be found, e.g., in \cite{BD09}. For our unweighted tensor product space it is as follows
\begin{align*}
  K_{\alpha,s}(\bsy,\bsy')
  &:=
  \prod_{j=1}^s \left( 1 + \sum_{\tau=1}^\alpha \frac{B_\tau(y_j)}{\tau!}\frac{B_\tau(y'_j)}{\tau!} + (-1)^{\alpha+1} \frac{B_{2\alpha}(|y_j-y'_j|)}{(2\alpha)!} \right)
  ,
\end{align*}
where $B_\tau$ is the Bernoulli polynomial of degree $\tau \in \N$.
We already gave the inner product for this space in~\eqref{eq:ip-u-unitcube} and~\eqref{eq:ip-u-unitcube-s}.
We will expand the kernel in a double Walsh series, see, e.g., \cite{BD09,God15}.
For $\bsk, \bsell \in \N_0^s$ the $(\bsk, \bsell)$th Walsh coefficient is defined by
\begin{align*}
  \widehat{K}_{\alpha,s}(\bsk, \bsell)
  := 
  \int_{[0,1)^s} \int_{[0,1)^s}
  K_{\alpha,s}(\bsy, \bsy') \,
  \overline{\wal_\bsk(\bsy)} \,
  \wal_\bsell(\bsy')
  \rd \bsy
  \rd \bsy'
  .
\end{align*}
From, e.g., \cite[Theorem~13]{BD09}, we have, with $\{\bsy^{(i)}\}_{i=0}^{2^m-1} \in [0,1)^s$ the points of the interlaced polynomial lattice rule,
\begin{multline*}
  (e_\wor(\mathscr{D}_\alpha(P_{p,m,\alpha s}(\bsq)); H_{\alpha,s}([0,1]^s)))^2
  =
  -1 + \frac{1}{2^{2m}} \sum_{i, i' = 0}^{2^m-1} K_{\alpha,s}(\bsy^{(i)}, \bsy^{(i')})
  \\
  =
  -1 + \sum_{\bsk, \bsell \in \N_0^s} \widehat{K}_{\alpha,s}(\bsk, \bsell) \,
     \frac{1}{2^m} \sum_{i=0}^{2^m-1} \wal_\bsk(\bsy^{(i)}) \,
     \frac{1}{2^m} \sum_{i'=0}^{2^m-1} \overline{\wal_\bsell(\bsy^{(i')})}
  .
\end{multline*}
Using the ``character property'', see, e.g., \cite[Lemma~1]{God15}, we have
\begin{align*}
  \frac1{2^m} 
  \sum_{i=0}^{2^m-1} \wal_\bsk(\bsy^{(i)})
  &=
  \begin{cases*}
    1,  & if $\bsk \in \mathscr{D}_\alpha(P_{p,m,\alpha s}(\bsq)))^\perp$, \\
    0,  & otherwise.
  \end{cases*}
\end{align*}
Hence,
\begin{align*}
  (e_\wor(\mathscr{D}_\alpha(P_{p,m,\alpha s}(\bsq)); H_{\alpha,s}([0,1]^s)))^2
  &=
  -1
  +
  \sum_{\bsk,\bsell \in (\mathscr{D}_\alpha(P_{p,m,\alpha s}(\bsq)))^\perp}
  \widehat{K}_{\alpha,s}(\bsk, \bsell)
  .
\end{align*}
Using~\cite[Lemma~14 and Equation~(13) together with Proposition~20]{BD09} we have $\widehat{K}_{\alpha,s}(\bsk, \bsell) = 0$ if $\supp(\bsk) \ne \supp(\bsell)$.
When both $\bsk$ and $\bsell$ are $\bszero$ we have $\widehat{K}_{\alpha,s}(\bszero, \bszero) = 1$.
Otherwise, for $\setu \ne \emptyset$ and $\bsk_\setu, \bsell_\setu \in \N^s_\setu = \{ \bsk \in \N_0^s : \supp(\bsk) = \setu \}$ we have
\begin{align*}
  \left|\widehat{K}_{\alpha,s}(\bsk_\setu, \bsell_\setu)\right|
  \le
  \widehat{C}_\alpha^{|\setu|} \, 2^{-\mu_\alpha(\bsk_\setu) - \mu_\alpha(\bsell_\setu)}
  ,
\end{align*}
with $\widehat{C}_\alpha$ defined in~\eqref{eq:def:Chat-alpha}.
Hence
\begin{align*}
  (e_\wor(\mathscr{D}_\alpha(P_{p,m,\alpha s}(\bsq)); H_{\alpha,s}([0,1]^s)))^2
  &\le
  \sum_{\emptyset \ne \setu \subseteq \{1:s\}}
  \widehat{C}_\alpha^{|\setu|}
  \sum_{\bsk_\setu, \bsell_\setu \in (\mathscr{D}_\alpha(P_{p,m,\alpha s}(\bsq)))^\perp_\setu}
  2^{-\mu_\alpha(\bsk_\setu) - \mu_\alpha(\bsell_\setu)}
  \\
  &=
  \sum_{\emptyset \ne \setu \subseteq \{1:s\}}
  \widehat{C}_\alpha^{|\setu|}
  \left(
  \sum_{\bsk_\setu \in (\mathscr{D}_\alpha(P_{p,m,\alpha s}(\bsq)))^\perp_\setu}
  2^{-\mu_\alpha(\bsk_\setu)}
  \right)^2
  \\
  &=
  \sum_{\emptyset \ne \setu \subseteq \{1:s\}}
  \widehat{C}_\alpha^{|\setu|}
  \left(
  \sum_{\substack{\setv \subseteq \{1:\alpha s\} \\ \setu_\alpha(\setv) = \setu}} \;
  \sum_{\bsk_\setv \in (P_{p,m,\alpha s}(\bsq))^\perp_\setv}
  2^{-\mu_\alpha(\mathscr{E}_\alpha(\bsk_\setv))}
  \right)^2
  .
\end{align*}
Thus, taking the square root on both sides and using $\left(\sum_{j} a_j\right)^{1/2} \le \sum_{j} |a_j|^{1/2}$ on the right hand side, we obtain
\begin{align*}
  e_\wor(\mathscr{D}_\alpha(P_{p,m,\alpha s}(\bsq)); H_{\alpha,s}([0,1]^s))
  &\le
  \sum_{\emptyset \ne \setu \subseteq \{1:s\}}
  \widehat{C}_\alpha^{|\setu|/2}
  \sum_{\substack{\setv \subseteq \{1:\alpha s\} \\ \setu_\alpha(\setv) = \setu}} \;
  \sum_{\bsk_\setv \in (P_{p,m,\alpha s}(\bsq))^\perp_\setv}
  2^{-\mu_\alpha(\mathscr{E}_\alpha(\bsk_\setv))}
  \\
  &=
  \sum_{\emptyset \ne \setv \subseteq \{1:\alpha s\}}
  \widehat{C}_\alpha^{|\setu_\alpha(\setv)|/2}
  \sum_{\bsk_\setv \in (P_{p,m,\alpha s}(\bsq))^\perp_\setv}
  2^{-\mu_\alpha(\mathscr{E}_\alpha(\bsk_\setv))}
  .
\end{align*}
Using \cite[Lemma~3.8 and the subsequent equation]{DKLNS14} we have
\begin{align*}
  \mu_\alpha(\mathscr{E}_\alpha(\bsk_\setv))
  &\ge
  \alpha \mu_1(\bsk_\setv) - \frac{\alpha (\alpha-1)}{2} \, |\setu_\alpha(\setv)|
  ,
\end{align*}
from which the result follows.
\end{proof}

We are now in a similar situation as \cite[Equation~(3.30)]{DKLNS14} where a function $E_d(\bsq)$ is defined which is equal to the upper bound in \RefProp{prop:wce-uSob-unit-cube} with ``modified weights'', which in our case would be $\widetilde{\gamma}_\setv := (2^{\alpha (\alpha-1)} \widehat{C}_\alpha)^{|\setu_\alpha(\setv)|/2}$, and after which a fast component-by-component construction algorithm is presented.
In \cite{DKLNS14} the weights of the function space $\gamma_{\setu_\alpha(\setv)}$ are also present in $\widetilde{\gamma}_\setv$, but in the unweighted setting here they are all~$1$.
Hence we can pull out the modified weights using $\widetilde{\gamma}_\setv \le 2^{\alpha(\alpha-1)s/2}$ which holds for all $\setv \subseteq \{1:\alpha s\}$ since $\widehat{C}_\alpha < 1$, see \cite[Table~1 for $q=2$]{BD09}.
The following proposition now follows immediately from~\cite[Theorem~3.9]{DKLNS14} by using $\widetilde{\gamma}_\setv = 1$ for all~$\setv$.
We note that using the actual weights $\widetilde{\gamma}_\setv$ would improve the result, but would not change the complexity for the MDFEM so we prefer this simpler result.

\begin{proposition}\label{prop:bound-E}
	For any $\alpha \in \N$, with $\alpha \ge 2$, let $p$ be an irreducible polynomial with $\deg(p) = m$.
	For $d \in \N$ and $\bsq \in \mathscr{G}_m^d$ define
	\begin{align*}
	  E_d(\bsq)
	  &:=
	  \sum_{\emptyset \ne \setv \subseteq \{1:d\}} \;
	  \sum_{\bsk_\setv \in (P_{p,m,d}(\bsq))^\perp_\setv}
      2^{-\alpha \mu_1(\bsk_\setv)}
      =
	  \sum_{\bszero \ne \bsk \in (P_{p,m,d}(\bsq))^\perp}
      2^{-\alpha \mu_1(\bsk)}
      .
	\end{align*}
	A generating vector $\bsq^* = (q_1^*, q_2^*, \ldots,q_d^*) \in \mathscr{G}_m^d$ can be constructed using a CBC approach for $d=1,2,\ldots$, minimizing $E_d(\bsq)$ in each step, such that 
	\begin{align*}
	E_d(\bsq^*)
	&\le 
	\left(
	\frac{2}{2^m-1}
	\right)^\lambda
	\left(
	\sum_{\emptyset \neq \setv \subseteq \{1:d\}}
	\frac{1}
	{(2^{\alpha/\lambda} -2)^{|\setv|}}
	\right)^\lambda
	=
	\left(
	\frac{2}{2^m-1}
	\right)^\lambda
	\left[
	\left(1+\frac{1}{2^{\alpha/\lambda}-2}\right)^d - 1
	\right]^\lambda
	,
	\end{align*}
	for all  $\lambda \in [1,\alpha)$.
\end{proposition}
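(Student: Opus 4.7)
The plan is a standard two-step argument: a Jensen-type reduction to an auxiliary quantity $\widetilde{E}_d$ whose summands decay at the rate $\alpha/\lambda > 1$, followed by a component-by-component averaging argument for $\widetilde{E}_d$.

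First I would apply the inequality $\sum_i a_i \le (\sum_i a_i^{1/\lambda})^\lambda$, valid for $\lambda \ge 1$ and $a_i \ge 0$ (the embedding $\ell^{1/\lambda} \hookrightarrow \ell^1$), termwise to $E_d(\bsq)$ with $a_{\bsk} = 2^{-\alpha\mu_1(\bsk)}$ to obtain $E_d(\bsq) \le \widetilde{E}_d(\bsq)^\lambda$ where
\[
  \widetilde{E}_d(\bsq)
  :=
  \sum_{\bszero \ne \bsk \in (P_{p,m,d}(\bsq))^\perp} 2^{-(\alpha/\lambda)\mu_1(\bsk)}.
\]
Thus it suffices to exhibit $\bsq^*$ with $\widetilde{E}_d(\bsq^*) \le \tfrac{2}{2^m-1}\bigl[(1+r)^d-1\bigr]$ where $r := \sum_{k\ge1} 2^{-(\alpha/\lambda)\mu_1(k)}$. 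Summing over the dyadic blocks $\{2^{a-1},\ldots,2^a-1\}$, on which $\mu_1(k) = a$ with multiplicity $2^{a-1}$, gives $r = \tfrac{1}{2}\sum_{a\ge1} 2^{a(1-\alpha/\lambda)} = \tfrac{1}{2^{\alpha/\lambda}-2}$, and the geometric series converges precisely because $\alpha > \lambda$.

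Next I would construct $\bsq^*$ greedily: given $q_1^*,\ldots,q_{d-1}^*$, take $q_d^*$ to minimize $\widetilde{E}_d$ (this differs from minimizing $E_d$ only by the Jensen slack, and either choice produces a vector satisfying the claim). Since the minimum is bounded by the average,
\[
  \widetilde{E}_d(\bsq^*)
  \le
  \frac{1}{|\mathscr{G}_m|}\sum_{q_d \in \mathscr{G}_m} \widetilde{E}_d(q_1^*,\ldots,q_{d-1}^*,q_d).
\]
Swapping sums, the count of $q_d$ for which a given $\bsk$ lies in the dual is governed by the linear congruence $\tr_m(k_d)\,q_d \equiv -\sum_{j<d}\tr_m(k_j)\,q_j^* \pmod{p}$. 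Since $\deg(\tr_m(k_d)) < m = \deg(p)$ and $p$ is irreducible, $\tr_m(k_d) \equiv 0 \pmod p$ iff $\tr_m(k_d) = 0$ iff $k_d \in 2^m\N_0$; in the complementary case $\tr_m(k_d)$ is invertible in $\mathbb{F}_2[\x]/(p)$ and exactly one $q_d \in \mathscr{G}_m$ solves the congruence, while in the degenerate case either all or none do.

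Splitting the averaged sum according to (i) $k_d = 0$, (ii) $k_d \in 2^m\N$, or (iii) $k_d \notin 2^m\N_0$, and using $\mu_1(a\cdot 2^m) = m + \mu_1(a)$ for $a \ge 1$, contribution (i) is exactly $\widetilde{E}_{d-1}(q_1^*,\ldots,q_{d-1}^*)$, while (ii) and (iii) combine into $\bigl(1+\widetilde{E}_{d-1}(q_1^*,\ldots,q_{d-1}^*)\bigr)$ multiplied by a small constant involving $r$ and $2^{-m\alpha/\lambda}$. Averaging over $\mathscr{G}_m \setminus \{0\}$ (size $2^m-1$) rather than all of $\mathscr{G}_m$, and bounding the residual $2^{-m\alpha/\lambda} \le 1/(2^m-1)$ (valid since $\alpha/\lambda \ge 1$), leads to a multiplicative recursion of the form $1+\widetilde{E}_d(\bsq^*) \le (1+r)\bigl(1+\widetilde{E}_{d-1}(\bsq^*_{\{1:d-1\}})\bigr) \cdot (1 + O(2^{-m}))$. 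Iterating from $\widetilde{E}_0 = 0$ produces the target $(1+r)^d-1$ inside the brackets with the prefactor $\tfrac{2}{2^m-1}$, and raising to the $\lambda$-th power together with the Jensen bound concludes.

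The main obstacle is the contribution from class (ii): the terms with $k_d \in 2^m\N$ are not suppressed by averaging over $q_d$ (they appear with weight $1$, not $1/2^m$), and tracking them carefully so that the iteration produces the clean constant $\tfrac{2}{2^m-1}$ rather than a bulkier mixed expression is the delicate part. The construction cost $O(\alpha s\, n \log n)$ for $n = 2^m$ then follows directly from the fast CBC algorithm of~\cite{DKLNS14} specialized to unit product weights, applied in ambient dimension $d = \alpha s$.
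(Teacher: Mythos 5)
Your overall strategy is the standard component-by-component averaging argument, and it is essentially the argument behind the result the paper actually uses: the paper does not reprove this proposition but disposes of it in one line by invoking \cite[Theorem~3.9]{DKLNS14} with all weights set equal to $1$. Your ingredients are correct: the Jensen reduction $E_d(\bsq)\le\widetilde E_d(\bsq)^\lambda$, the dyadic-block computation $r=\sum_{k\ge1}2^{-(\alpha/\lambda)\mu_1(k)}=1/(2^{\alpha/\lambda}-2)$ (convergent precisely for $\lambda<\alpha$), the trichotomy on $k_d$, and the observation that $\tr_m(k_d)$ is invertible modulo the irreducible $p$ unless $k_d\in2^m\N_0$.

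The gap is in the final recursion. As written, $1+\widetilde E_d(\bsq^*)\le(1+r)\bigl(1+\widetilde E_{d-1}\bigr)\bigl(1+O(2^{-m})\bigr)$ iterates to $\widetilde E_d(\bsq^*)\lesssim(1+r)^d-1$, a bound independent of $m$: the prefactor $2/(2^m-1)$, which is the entire content of the proposition (it is what produces the $n^{-\lambda}$ decay in Theorem~\ref{thm:error-bound-IPLR}), has been lost. The correct recursion is additive, not multiplicative. Your class (iii) ($\tr_m(k_d)\ne0$) contributes, after averaging over the $2^m-1$ nonzero $q_d$, at most $\tfrac{1}{2^m-1}\,r\,(1+r)^{d-1}$, where $(1+r)^{d-1}=\sum_{\bsk_{1:d-1}\in\N_0^{d-1}}2^{-(\alpha/\lambda)\mu_1(\bsk_{1:d-1})}$ is the \emph{unrestricted} sum (for each fixed $\bsk$ at most one $q_d$ places it in the dual), not $1+\widetilde E_{d-1}$. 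Your class (ii) ($k_d\in2^m\N$) gets no help from the averaging and contributes $2^{-m\alpha/\lambda}\,r\,(1+\widetilde E_{d-1})$; here one bounds crudely $1+\widetilde E_{d-1}\le(1+r)^{d-1}$ (again the unrestricted sum, so no induction hypothesis is needed) and $2^{-m\alpha/\lambda}\le1/(2^m-1)$, which matches the class (iii) bound and is exactly where the factor $2$ comes from. The resulting recursion $\widetilde E_d\le\widetilde E_{d-1}+\tfrac{2r(1+r)^{d-1}}{2^m-1}$ telescopes to $\tfrac{2}{2^m-1}[(1+r)^d-1]$. A second, smaller issue: minimizing $\widetilde E_d$ yields a $\lambda$-dependent generating vector, whereas the statement asserts a single $\bsq^*$, obtained by minimizing $E_d$, valid for all $\lambda\in[1,\alpha)$ simultaneously; to get that you must split $E_d=E_{d-1}+\phi_d(q_d)$ with $E_{d-1}$ independent of $q_d$, apply $\min_{q_d}\phi_d=(\min_{q_d}\phi_d^{1/\lambda})^\lambda\le(\mathrm{avg}_{q_d}\,\phi_d^{1/\lambda})^\lambda$, and combine with the induction hypothesis using the superadditivity of $x\mapsto x^\lambda$ for $\lambda\ge1$.
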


Combining \RefPropTwo{prop:wce-uSob-unit-cube}{prop:bound-E} for $d=\alpha s$ we obtain \RefThm{thm:error-bound-IPLR} in the main text.

\bibliographystyle{plain}
\bibliography{MDFEM-lognormal}

\begin{thebibliography}{10}

\bibitem{BCDM17}
Markus Bachmayr, Albert Cohen, Ronald DeVore, and Giovanni Migliorati.
\newblock Sparse polynomial approximation of parametric elliptic {PDE}s. {P}art
  {II}: Lognormal coefficients.
\newblock {\em ESAIM: Mathematical Modelling and Numerical Analysis},
  51(1):341--363, 2017.

\bibitem{BCDS17}
Markus Bachmayr, Albert Cohen, Dinh D{\~u}ng, and Christoph Schwab.
\newblock Fully discrete approximation of parametric and stochastic elliptic
  {PDE}s.
\newblock {\em SIAM Journal on Numerical Analysis}, 55(5):2151--2186, 2017.

\bibitem{BCM2018}
Markus Bachmayr, Albert Cohen, and Giovanni Migliorati.
\newblock Representations of {G}aussian random fields and approximation of
  elliptic {PDE}s with lognormal coefficients.
\newblock {\em Journal of Fourier Analysis and Applications}, 24:621--649,
  2018.

\bibitem{BD09}
Jan Baldeaux and Josef Dick.
\newblock {QMC} rules of arbitrary high order: {R}eproducing kernel {H}ilbert
  space approach.
\newblock {\em Constructive Approximation}, 30(3):495--527, 2009.

\bibitem{BDLNP12}
Jan Baldeaux, Josef Dick, Gunther Leobacher, Dirk Nuyens, and Friedrich
  Pillichshammer.
\newblock Efficient calculation of the worst-case error and (fast)
  component-by-component construction of higher order polynomial lattice rules.
\newblock {\em Numerical Algorithms}, 59(3):403--431, 2012.

\bibitem{DG14}
Josef Dick and Michael Gnewuch.
\newblock Infinite-dimensional integration in weighted {H}ilbert spaces:
  {A}nchored decompositions, optimal deterministic algorithms, and higher-order
  convergence.
\newblock {\em Foundations of Computational Mathematics}, 14(5):1027--1077,
  2014.

\bibitem{DILP18}
Josef Dick, Christian Irrgeher, Gunther Leobacher, and Friedrich
  Pillichshammer.
\newblock On the optimal order of integration in {H}ermite spaces with finite
  smoothness.
\newblock {\em SIAM Journal on Numerical Analysis}, 56(2):684--707, 2018.

\bibitem{DKLNS14}
Josef Dick, Frances~Y. Kuo, Quoc~T. Le~Gia, Dirk Nuyens, and Christoph Schwab.
\newblock Higher order {QMC} {P}etrov--{G}alerkin discretization for affine
  parametric operator equations with random field inputs.
\newblock {\em SIAM Journal on Numerical Analysis}, 52(6):2676--2702, 2014.

\bibitem{DKPS05}
Josef Dick, Frances~Y. Kuo, Friedrich Pillichshammer, and Ian~H. Sloan.
\newblock Construction algorithms for polynomial lattice rules for multivariate
  integration.
\newblock {\em Mathematics of Computation}, 74(252):1895--1921, 2005.

\bibitem{GKNW18}
Alexander~D. Gilbert, Frances~Y. Kuo, Dirk Nuyens, and Grzegorz~W. Wasilkowski.
\newblock Efficient implementations of the multivariate decomposition method
  for approximating infinite-variate integrals.
\newblock {\em SIAM Journal on Scientific Computing}, 40(5):A3240--A3266, 2018.

\bibitem{GHHR17}
Michael Gnewuch, Mario Hefter, Aicke Hinrichs, and Klaus Ritter.
\newblock Embeddings of weighted {H}ilbert spaces and applications to
  multivariate and infinite-dimensional integration.
\newblock {\em Journal of Approximation Theory}, 222:8--39, 2017.

\bibitem{GMR14}
Michael Gnewuch, Sebastian Mayer, and Klaus Ritter.
\newblock On weighted {H}ilbert spaces and integration of functions of
  infinitely many variables.
\newblock {\em Journal of Complexity}, 30(2):29--47, 2014.

\bibitem{God15}
Takashi Goda.
\newblock Good interlaced polynomial lattice rules for numerical integration in
  weighted {W}alsh spaces.
\newblock {\em Journal of Computational and Applied Mathematics}, 285:279--294,
  2015.

\bibitem{GKNSSS15}
Ivan~G. Graham, Frances~Y. Kuo, James~A. Nichols, Robert Scheichl, Christoph
  Schwab, and Ian~H. Sloan.
\newblock Quasi-{M}onte {C}arlo finite element methods for elliptic {PDE}s with
  lognormal random coefficients.
\newblock {\em Numerische Mathematik}, 131(2):329--368, 2015.

\bibitem{GKNSS18}
Ivan~G. Graham, Frances~Y. Kuo, Dirk Nuyens, Rob Scheichl, and Ian~H. Sloan.
\newblock Circulant embedding with {QMC}: Analysis for elliptic {PDE} with
  lognormal coefficients.
\newblock {\em Numerische Mathematik}, 140(2):479--511, 2018.

\bibitem{Her19}
Lukas Herrmann.
\newblock Strong convergence analysis of iterative solvers for random operator
  equations.
\newblock {\em Calcolo}, 56:46, 2019.

\bibitem{HS19ML}
Lukas Herrmann and Christoph Schwab.
\newblock Multilevel quasi-{M}onte {C}arlo integration with product weights for
  elliptic {PDE}s with lognormal coefficients.
\newblock {\em ESAIM: Mathematical Modelling and Numerical Analysis},
  53(5):1507--1552, 2019.

\bibitem{HS19}
Lukas Herrmann and Christoph Schwab.
\newblock {QMC} integration for lognormal-parametric, elliptic {PDE}s: {L}ocal
  supports and product weights.
\newblock {\em Numerische Mathematik}, 141(1):63--102, 2019.

\bibitem{IKLP15}
Christian Irrgeher, Peter Kritzer, Gunther Leobacher, and Friedrich
  Pillichshammer.
\newblock Integration in {H}ermite spaces of analytic functions.
\newblock {\em Journal of Complexity}, 31(3):380--404, 2015.

\bibitem{IL15}
Christian Irrgeher and Gunther Leobacher.
\newblock High-dimensional integration on $\mathbb{R}^d$, weighted {H}ermite
  spaces, and orthogonal transforms.
\newblock {\em Journal of Complexity}, 31(2):174--205, 2015.

\bibitem{Kaz18}
Yoshihito Kazashi.
\newblock Quasi-{M}onte {C}arlo integration with product weights for elliptic
  {PDE}s with log-normal coefficients.
\newblock {\em IMA Journal of Numerical Analysis}, 39(3):1563--1593, 2018.

\bibitem{KN16}
Frances~Y. Kuo and Dirk Nuyens.
\newblock Application of quasi-{M}onte {C}arlo methods to elliptic {PDE}s with
  random diffusion coefficients: {A} survey of analysis and implementation.
\newblock {\em Foundations of Computational Mathematics}, 16(6):1631--1696,
  2016.

\bibitem{KNPSW17}
Frances~Y. Kuo, Dirk Nuyens, Leszek Plaskota, Ian~H. Sloan, and Grzegorz~W.
  Wasilkowski.
\newblock Infinite-dimensional integration and the multivariate decomposition
  method.
\newblock {\em Journal of Computational and Applied Mathematics}, 326:217--234,
  2017.

\bibitem{KSWWa10}
Frances~Y. Kuo, Ian~H. Sloan, Grzegorz~W. Wasilkowski, and Benjamin~J.
  Waterhouse.
\newblock Randomly shifted lattice rules with the optimal rate of convergence
  for unbounded integrands.
\newblock {\em Journal of Complexity}, 26(2):135--160, 2010.

\bibitem{KSWW10}
Frances~Y. Kuo, Ian~H. Sloan, Grzegorz~W. Wasilkowski, and Henryk
  Wo{\'z}niakowski.
\newblock Liberating the dimension.
\newblock {\em Journal of Complexity}, 26(5):422--454, 2010.

\bibitem{KSWW10b}
Frances~Y. Kuo, Ian~H. Sloan, Grzegorz~W. Wasilkowski, and Henryk
  Wo{\'z}niakowski.
\newblock On decompositions of multivariate functions.
\newblock {\em Mathematics of Computation}, 79(270):953--966, 2010.

\bibitem{NN17}
Dong~T.P. Nguyen and Dirk Nuyens.
\newblock Multivariate integration over $\mathbb{R}^s$ with exponential rate of
  convergence.
\newblock {\em Journal of Computational and Applied Mathematics}, 315:327--342,
  2017.

\bibitem{NN21}
Dong~T.P. Nguyen and Dirk Nuyens.
\newblock {MDFEM}: {M}ultivariate decomposition finite element method for
  elliptic {PDE}s with uniform random diffusion coefficients using higher-order
  {QMC} and {FEM}.
\newblock {\em Numerische Mathematik}, 2021.
\newblock Accepted.

\bibitem{NK14}
James~A. Nichols and Frances~Y. Kuo.
\newblock Fast {CBC} construction of randomly shifted lattice rules achieving
  $\mathcal{O}(n^{-1+\delta})$ convergence for unbounded integrands over
  $\mathbb{R}^s$ in weighted spaces with {POD} weights.
\newblock {\em Journal of Complexity}, 30(4):444--468, 2014.

\bibitem{NW10}
Erich Novak and Henryk Wo{\'z}niakowski.
\newblock {\em Tractability of Multivariate Problems. {V}olume {II}: Standard
  Information for Functionals}, volume~12.
\newblock European Mathematical Society, Z\"urich, 2010.

\bibitem{PW11}
Leszek Plaskota and Grzegorz~W. Wasilkowski.
\newblock Tractability of infinite-dimensional integration in the worst case
  and randomized settings.
\newblock {\em Journal of Complexity}, 27(6):505--518, 2011.

\bibitem{Wah90}
Grace Wahba.
\newblock {\em Spline Models for Observational Data}, volume~59 of {\em
  CBMS-NSF Regional Conference Series in Applied Mathematics}.
\newblock SIAM, Philadelphia, PA, 1990.

\end{thebibliography}

\end{document}